\title{Filtrations and cohomology II: the Gauss--Manin connection}\author{Benjamin Antieau}\date{\today}
\newcommand{\defidx}[1]{#1\index{#1|textbf}}
\newcommand{\longdefidx}[2]{#1\index{#2|textbf}}
\newcommand{\stackspace}{2.5}
\newcommand{\stack}[2][1cm]{\;\tikz[baseline, yshift=.65ex]%
    {\foreach \k [evaluate=\k as \r using (.5*#2+.5-\k)*\stackspace] in {1,...,#2}{%
    \ifodd\k{\draw[->](0,\r pt)--(#1,\r pt);}%
    \else{\draw[<-](0,\r pt)--(#1,\r pt);}\fi
    }}\;}
\DeclareSymbolFontAlphabet{\mathbb}{AMSb} 
\DeclareSymbolFontAlphabet{\mathbbl}{bbold}
\newcommand{\Prism}{{\mathbbl{\Delta}}}
\newcommand{\Inf}{{\mathbbl{\Pi}}}
\newcommand{\Infhat}{\widehat{\Inf}}
\definecolor{todo}{rgb}{1,0,0}
\definecolor{conditional}{rgb}{0,1,0}
\definecolor{e-mail}{rgb}{0,.40,.80}
\definecolor{reference}{rgb}{.20,.60,.22}
\definecolor{mrnumber}{rgb}{.80,.40,0}
\definecolor{citation}{rgb}{0,.40,.80}
\renewcommand{\bf}{\bfseries}
\let\oldmarginpar\marginpar
\renewcommand\marginpar[1]{\-\oldmarginpar[\raggedleft\footnotesize #1]%
{\raggedright\footnotesize #1}}
\newcommand{\Cscr}{\mathcal{C}}
\newcommand{\Dscr}{\mathcal{D}}
\newcommand{\Oscr}{\mathcal{O}}
\renewcommand{\d}{\mathrm{d}}
\newcommand{\E}{\mathrm{E}}
\newcommand{\F}{\mathrm{F}}
\renewcommand{\H}{\mathrm{H}}
\newcommand{\h}{\mathrm{h}}
\renewcommand{\L}{\mathrm{L}}
\newcommand{\R}{\mathrm{R}}
\renewcommand{\r}{\mathrm{r}}
\newcommand{\T}{\mathrm{T}}
\renewcommand{\t}{\mathrm{t}}
\newcommand{\bA}{\mathbf{A}}
\newcommand{\bC}{\mathbf{C}}
\newcommand{\bD}{\mathbf{D}}
\newcommand{\bE}{\mathbf{E}}
\newcommand{\bF}{\mathbf{F}}
\newcommand{\bG}{\mathbf{G}}
\newcommand{\bN}{\mathbf{N}}
\newcommand{\bQ}{\mathbf{Q}}
\newcommand{\bS}{\mathbf{S}}
\newcommand{\bT}{\mathbf{T}}
\newcommand{\bZ}{\mathbf{Z}}
\newcommand{\op}{\mathrm{op}}
\newcommand{\cofib}{\mathrm{cofib}}
\newcommand{\fib}{\mathrm{fib}}
\newcommand{\cn}{\mathrm{cn}}
\newcommand{\Mod}{\mathrm{Mod}}
\newcommand{\FMod}{\mathrm{FMod}}
\newcommand{\BFMod}{\mathrm{BFMod}}
\newcommand{\BF}{\mathrm{BF}}
\newcommand{\Perf}{\mathrm{Perf}}
\newcommand{\cMod}{\mathrm{cMod}}
\newcommand{\DAlg}{\mathrm{DAlg}}
\newcommand{\FDAlg}{\mathrm{FDAlg}}
\newcommand{\Gr}{\mathrm{Gr}}
\newcommand{\gr}{\mathrm{gr}}
\newcommand{\ins}{\mathrm{ins}}
\newcommand{\fil}{\mathrm{fil}}
\newcommand{\LSym}{\mathrm{LSym}}
\newcommand{\Pairs}{\mathrm{Pairs}}
\newcommand{\perf}{\mathrm{perf}}
\renewcommand{\geq}{\geqslant}
\renewcommand{\leq}{\leqslant}
\DeclareMathOperator{\Tor}{Tor}
\DeclareMathOperator{\Ext}{Ext}
\newcommand{\HH}{\mathrm{HH}}
\newcommand{\HP}{\mathrm{HP}}
\renewcommand{\inf}{\mathrm{inf}}
\newcommand{\crys}{\mathrm{crys}}
\newcommand{\dt}{\mathrm{d}t}
\newcommand{\dR}{\mathrm{dR}}
\newcommand{\dRhat}{\widehat{\dR}}
\newcommand{\GM}{\mathrm{GM}}
\newcommand{\Map}{\mathrm{Map}}
\newcommand{\Fun}{\mathrm{Fun}}
\newcommand{\Gm}{\bG_{m}}
\DeclareMathOperator*{\colim}{colim}
\DeclareMathOperator*{\Tot}{Tot}
\DeclareMathOperator{\Spec}{Spec}
\newcommand{\we}{\simeq}
\newcommand{\iso}{\cong}
\theoremstyle{plain}
\newtheorem{theorem}{Theorem}[section]
\newtheorem*{theorem*}{Theorem}
\newtheorem{lemma}[theorem]{Lemma}
\newtheorem{proposition}[theorem]{Proposition}
\newtheorem{corollary}[theorem]{Corollary}
\newtheorem*{corollary*}{Corollary}
\theoremstyle{plain}
\theoremstyle{definition}
\newtheoremstyle{named}{}{}{\itshape}{}{\bfseries}{.}{.5em}{#1 \thmnote{#3}}
\theoremstyle{named}
\theoremstyle{definition}
\newtheorem{definition}[theorem]{Definition}
\newtheorem{warning}[theorem]{Warning}
\newtheorem{variant}[theorem]{Variant}
\newtheorem{example}[theorem]{Example}
\newtheorem*{example*}{Example}
\newtheorem*{question*}{Question}
\newtheorem{construction}[theorem]{Construction}
\newtheorem{remark}[theorem]{Remark}
\newtheorem{exercise}[theorem]{Exercise}
\newtheorem{numberedproof}[theorem]{Proof}
\begin{document}

\maketitle
\begin{abstract}
    \noindent
    We use derived methods to study the Gauss--Manin connection in Hochschild homology,
    infinitesimal cohomology, and derived de Rham
    cohomology. As applications, we give new approaches to nilinvariance,
    the Quillen spectral sequence, and the HKR filtration. We extend the results of Bhatt's work on
    de Rham cohomology in characteristic zero to infinitesimal cohomology in mixed characteristic
    and show that the comparison to Hartshorne's algebraic de Rham complex ``is'' the Gauss--Manin
    connection. Finally, we explain the main features of prismatic cohomology in characteristic zero via
    the Gauss--Manin connection.
\end{abstract}


\section{Introduction}\label{sec:intro}

The Gauss--Manin connection controls the behavior of de Rham cohomology in
families and is the source for many natural local
systems. The usual setting is a composable map $Y\xrightarrow{p} X\rightarrow S$ of
smooth morphisms; if $(E,\nabla)$ is an $S$-linear local system on $Y$, one
can push forward along $p$ to obtain a complex of $S$-linear local systems $(\R p_*E,\nabla)$ on
$X$. The Hodge filtration on the fibers of $\R p_*E$ is not respected by
$\nabla$, but it is only `one off', which is the statement of Griffiths transversality.

In characteristic $0$, the Gauss--Manin connection is especially important via
its role in variations of Hodge structures. In
characteristic $p$, the existence of extra flat sections thanks to
$\d(x^p)=0$ yields a
more flexible theory which is responsible for the nilpotence
results which apply to the monodromy of the Gauss--Manin connection, even in characteristic $0$.
These results are at the heart of Katz' work on
monodromy~\cite{katz-nilpotent} and $p$-curvature~\cite{katz-algebraic}.

The Gauss--Manin connection was apparently defined by Mumford in a
seminar at Harvard in 1966-67. It was extended by Katz in his
dissertation~\cite{katz-period} and then given its current form, which realizes the
connection as a differential in a spectral sequence associated to an explicit
filtration on the de Rham complex, by Katz and Oda~\cite{katz-oda}. The
transversality property was elaborated by Griffiths~\cite{griffiths-periods-2}
(see~\cite{griffiths-discussion} for further explanation).

The Katz--Oda approach is as follows. Consider a pair of smooth maps
$k\rightarrow R$ and $R\rightarrow S$ of commutative rings. One can filter the
classical de Rham complex $\Omega^\bullet_{S/k}$ by saying that $i$-forms $f\d
g_1\wedge\cdots\wedge \d g_i$ are in filtration degree $n$ if (locally) at
least $n$ of the $g_i$ are in $R$. Thus, if $R=k[x_1,\ldots,x_r]$ and
$S=R[y_1,\ldots,y_s]$, the $i$-form $\d x_1\wedge\cdots\wedge\d x_n\wedge\d
y_1\wedge\cdots\wedge\d y_{i-n}$ has filtration degree $n$ in
$\Omega^i_{S/k}$. This construction defines a finite
filtration on $\Omega^\bullet_{S/k}$ and the associated graded chain
complexes are $$\Omega^{\bullet-n}_{S/R}\otimes_R\Omega^n_{R/k}[-n].$$
In this paper, 
the Gauss--Manin connection simultaneously refers to this filtration on
$\Omega^\bullet_{S/k}$ and the associated coherent cochain complex\footnote{A coherent cochain complex as above consists of differentials $\nabla$ together with specified
nullhomotopies $H:\nabla^2\we 0$, specified compatibilities between the nullhomotopies
$H\circ\nabla$ and $\nabla\circ H$ expressing $\nabla^3\we 0$, and so on. The $\infty$-category of
such coherent cochain complexes with values in $\Mod_k$ is equivalent to $\widehat{\FMod}_k$, the
$\infty$-category of complete filtrations in $\Mod_k$. See~\cite{ariotta,raksit}.}
$$\Omega^\bullet_{S/R}\xrightarrow{\nabla}\Omega^{\bullet-1}_{S/R}\otimes_R\Omega^1_{R/k}\xrightarrow{\nabla}\Omega^{\bullet-2}_{S/R}\otimes_R\Omega^2_{R/k}\xrightarrow{\nabla}\cdots.$$
If we take $\gr^1$, we obtain the coherent chain complex $$\cdots\rightarrow 0\rightarrow\Omega^1_{S/R}\rightarrow
S\otimes_R\Omega^1_{R/k}[1]\rightarrow 0\rightarrow\cdots,$$
which corresponds to the extension
$S\otimes_R\Omega^1_{S/k}\rightarrow\Omega^1_{S/k}\rightarrow\Omega^1_{S/R}$,
i.e., the Kodaira--Spencer class
$\rho\in\Ext^1_S(\Omega^1_{S/R},S\otimes_R\Omega^1_{R/k})$.

One can globalize this construction. If $X\rightarrow\Spec R$ is smooth and
proper, then the Gauss--Manin connection on $\R\Gamma_{\dR}(X/R)$ gives rise to
a coherent cochain complex
$$\R\Gamma_\dR(X/R)\xrightarrow{\nabla}\R\Gamma_{\dR}(X/R)\otimes_R\Omega^1_{R/k}\xrightarrow{\nabla}\cdots.$$
Now, $\R\Gamma_{\dR}(X/R)$ is a perfect complex of $R$-modules. Taking
cohomology yields connections on $\H^i_{\dR}(X/R)$. If $R$ is a noetherian
$\bQ$-algebra, the
existence of the Gauss--Manin connection on each $\H^i_\dR(X/R)$ implies that
they are in fact finitely presented projective $R$-modules by a classical
argument in $D$-module theory. This is the crystalline nature of de Rham cohomology.\footnote{In
mixed characteristic it is no longer the case that the cohomology groups
$\H^i_{\dR}(X/R)$ are necessarily projective. Indeed, if $k=R=\bZ_2$ and $X$ is
an Enriques surface over $\Spec\bZ_2$, then
$\H^2_{\dR}(X/\bZ_2)\we\bZ_2^{10}\oplus\bZ/2$.
See~\cite[Prop.~7.3.5]{illusie-derham-witt} for the computation of the crystalline
cohomology of the special fiber of $X\rightarrow\Spec\bZ_p$ and use that this
agrees with the de Rham cohomology of $X$.}

The main goal of this paper is to introduce the Gauss--Manin connection for derived de Rham
cohomology. In fact, we make a construction which applies to a broad class of filtered
$\bE_\infty$-rings. Specializing to HKR-filtered Hochschild homology, Hodge-filtered infinitesimal
cohomology, and Hodge-filtered derived de Rham cohomology, we obtain the following result.

\begin{theorem}[Proof~\ref{proof:main}]\label{thm:main}
    Let $k\rightarrow R\rightarrow S$ be maps of derived commutative rings.
    \begin{enumerate}
        \item[{\em (a)}] If $k,R,S$ are connective, then there is a complete multiplicative
            $\bT_\fil$-equivariant
            filtration $\F^\star_\GM\HH_\fil(S/k)$ on
            $\HH_\fil(S/k)$ with associated graded pieces
            $\gr^i_\GM\HH_\fil(S/k)\we\HH_\fil(S/R)\otimes_R\ins^i\Lambda^i\L_{R/k}[i]$.
        \item[{\em (b)}] There is a complete multiplicative filtration $\F^\star_\GM\F^\star_\H\Infhat_{S/k}$
            on $\F^\star_\H\Infhat_{S/k}$ with associated graded pieces
            $\gr^i_\GM\F^\star_\H\Infhat_{S/k}\we\F^{\star}_\H\Infhat_{S/R}\widehat{\otimes}_R\ins^i\LSym^i(\L_{R/k}[-1])$.
        \item[{\em (c)}] There is a complete multiplicative filtration $\F^\star_\GM\F^\star_\H\dRhat_{S/k}$ on
            $\F^\star_\H\dRhat_{S/k}$ with associated graded pieces
            $\gr^i_\GM\F^\star_\H\dRhat_{S/k}\we\F^{\star}_\H\dRhat_{R/k}\widehat{\otimes}_R\ins^i\Lambda^i\L_{R/k}[-i]$.
    \end{enumerate}
\end{theorem}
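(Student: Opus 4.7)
The plan is to construct all three Gauss--Manin filtrations from a single source, the transitivity fiber sequence
\[
S \otimes_R \L_{R/k} \to \L_{S/k} \to \L_{S/R},
\]
regarded as a two-step filtration $\F^\star_\GM \L_{S/k}$ with $\F^0 = \L_{S/k}$, $\F^1 = S \otimes_R \L_{R/k}$, and $\F^{\geq 2} = 0$, so that $\gr^0_\GM = \L_{S/R}$ and $\gr^1_\GM = S \otimes_R \L_{R/k}$ sits in Gauss--Manin weight $1$. Each of the three invariants in the theorem is obtained from a shift of the cotangent complex by a completed, filtered, derived-commutative $\LSym$-type functor: $\HH_\fil(-/k)$ arises from $\L[+1]$ with the HKR filtration, while $\F^\star_\H \Infhat_{-/k}$ and $\F^\star_\H \dRhat_{-/k}$ arise from $\L[-1]$ with the Hodge filtration, differing by whether one uses the divided-power or exterior-power incarnations of $\LSym$ on shifted objects. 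The plan is to promote each of these functors to act on filtered modules over filtered derived commutative rings and then evaluate on $\F^\star_\GM \L_{S/k}$, producing a bifiltered object whose Hodge or HKR direction recovers the named filtration and whose Gauss--Manin direction yields $\F^\star_\GM$.

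To identify $\gr^i_\GM$, I would observe that the $i$-th Gauss--Manin graded piece of the filtered derived symmetric algebra of $\F^\star_\GM \L_{S/k}$ is computed by the contributions with exactly $i$ tensor factors from $\gr^1_\GM = S \otimes_R \L_{R/k}$. The remaining factors, drawn from $\gr^0_\GM = \L_{S/R}$, assemble into the corresponding invariant of $S/R$, namely $\HH_\fil(S/R)$, $\F^{\star}_\H \Infhat_{S/R}$, or $\F^{\star}_\H \dRhat_{S/R}$. The complementary tensor factor is then a suitable symmetric power of $\gr^1_\GM$: the derived $\LSym^i$ of $\L_{R/k}[+1]$, $\L_{R/k}[-1]$, or $\L_{R/k}[-1]$ respectively, which decategorifies to $\Lambda^i \L_{R/k}[+i]$, $\LSym^i(\L_{R/k}[-1])$, or $\Lambda^i \L_{R/k}[-i]$. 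The $\ins^i$ shift records that this factor sits in Gauss--Manin weight $i$ and simultaneously inherits the ambient Hodge or HKR grading from $k \to R$.

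The main technical obstacle will be making the filtered-functor formalism precise: one must construct the filtered incarnations of derived $\LSym$ (and its Hodge-completed and HKR variants) as lax symmetric monoidal functors on filtered $R$-modules, and then verify multiplicativity and completeness of the resulting Gauss--Manin filtration. For (a), one must additionally ensure $\bT_\fil$-equivariance; this should reduce to the observation that the transitivity fiber sequence of cotangent complexes carries the trivial circle action, so that promoting the HKR-filtered $\HH$ to a $\bT_\fil$-equivariant filtered functor automatically transports the action to the bifiltered output. Once this foundational apparatus is in place, the three statements follow as parallel specializations of one construction, with the identification of associated gradeds being largely a bookkeeping exercise about the weights of $\LSym^i$ applied to a two-step filtered object.
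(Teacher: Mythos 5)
Your proposal rests on the premise that $\HH_\fil(S/k)$, $\F^\star_\H\Infhat_{S/k}$, and $\F^\star_\H\dRhat_{S/k}$ ``arise from'' a shift of $\L_{S/k}$ by a filtered $\LSym$-type functor, so that one could evaluate such a functor on the two-step filtration $\F^\star_\GM\L_{S/k}$ coming from the transitivity sequence. That premise conflates these filtered invariants with their associated graded objects. Only $\gr_\H$ (resp.\ $\gr_{\mathrm{HKR}}$) is free on the shifted cotangent complex; the filtered objects themselves are not functors of $\L_{S/k}$ at all --- they are defined via the circle action (for $\HH$) or as left adjoints to $\gr^0$ out of $\DAlg$ into crystalline/infinitesimal filtered derived rings, and the de Rham differential is exactly the non-split gluing data of the Hodge filtration. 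Consequently there is nothing to ``evaluate on $\F^\star_\GM\L_{S/k}$'': applying a filtered symmetric-algebra functor to that two-step filtration produces a Gauss--Manin-type filtration on the split object $\bigoplus_j\Lambda^j\L_{S/k}[-j]$ (recovering Illusie's standard filtration on each $\Lambda^j\L_{S/k}$, which in the paper appears only as the description of the rows of the double complex of graded pieces), but it cannot produce a filtration on $\F^\star_\H\dRhat_{S/k}$ itself, and in particular never sees the connection $\nabla$, which lives precisely in the extension data your construction discards. The same objection applies verbatim to (a) and (b).

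The missing ideas are the ones the paper's argument is built on: (i) the relative universal property and the resulting collapse formula $\F^\star_\H\dR_{S/k}\otimes_{\F^\star_\H\dR_{R/k}}R\we\F^\star_\H\dR_{S/R}$ (and its $\HH_\fil$ and $\Inf$ analogues), which is what identifies the graded pieces as \emph{relative} invariants of $S/R$ rather than symmetric powers of $\L_{S/R}$; and (ii) a device for filtering the genuine filtered object by its module structure over $\F^\star_\H\dRhat_{R/k}$, namely the bifiltered tensor product $\Delta_!\F^\star_\H\dR_{R/k}\overline{\otimes}_{\chi_!\F^\star_\H\dR_{R/k}}\chi_!\F^\star_\H\dR_{S/k}$, column-completed so that the result is a complete filtration on $\F^\star_\H\dRhat_{S/k}$ without changing the underlying object. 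Completeness is not a routine technicality to be deferred: the naive (uncompleted) horizontal filtration genuinely fails to be complete in characteristic zero (see the elliptic-surface example in the paper), and the whole computational content of the theorem hinges on the column-completion lemma. Finally, your remark that $\bT_\fil$-equivariance in (a) ``should reduce to the trivial circle action on the transitivity sequence'' has no footing once the construction is corrected; in the paper it comes from the $\chi_!\bT_\fil^\vee$-coaction on the bifiltered tensor product.
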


We can unwind these Gauss--Manin connections and view them as coherent cochain complexes in
$\widehat{\FMod}_k$. For example, consider the case of $\F^\star_\GM\F^\star_\H\dRhat_{S/k}$.
The fact that $\ins^i\Lambda^i\L_{R/k}[-i]$, which is the complete exhaustive filtration on
$\Lambda^i\L_{R/k}[-i]$ corresponding to the coherent cochain complex with
$\Lambda^i\L_{R/k}$ in cohomological degree $i$ and $0$ elsewhere, is not concentrated in weight $0$ but rather in weight
$i$ is responsible for Griffiths transversality.
Indeed,
$$\F^\star_\H\dRhat_{S/R}\widehat{\otimes}_R\ins^i\L_{R/k}[-i]\we\F^{\star-i}_\H\dRhat_{S/R}\widehat{\otimes}_R\L_{R/k}[-i].$$
Thus, $\F^\star_\GM\F^\star_\H\dRhat_S/k$ corresponds to a coherent cochain complex of the form
$$\F^\star_\H\dRhat_{S/R}\xrightarrow{\nabla}\F^{\star-1}_\H\dRhat_{S/R}\widehat{\otimes}_R\L_{R/k}\xrightarrow{\nabla}\F^{\star-2}_\H\dRhat_{S/R}\widehat{\otimes}_R\Lambda^2\L_{R/k}\xrightarrow{\nabla}\cdots,$$
which is a derived version of the Gauss--Manin connection with Griffiths transversality.
In case $k\rightarrow R\rightarrow S$ are smooth maps of commutative rings, the Gauss--Manin
connection of Theorem~\ref{thm:main}(c) recovers the classical Katz--Oda complex.

The completions in (b) and (c) of Theorem~\ref{thm:main} are necessary.
However, if $\L_{R/k}$ is perfect, then there is no need to complete the tensor products.
In many cases, we will see that these completions give an Adams-type completion of
$\Lambda^i\L_{R/k}[-i]$ with respect to $S$, which plays an important role in our treatment of
derived de Rham cohomology in characteristic zero and infinitesimal cohomology in general.

On the other hand, under the connectivity assumption of (a), the HKR filtration is already
complete and this is preserved by tensoring with a bounded below complex.
There is a version of (a) without connectivity assumptions, but it would then apply only
to the HKR-completed Hochschild homology, which is not commonly considered.

The Gauss--Manin connection in Hochschild homology can be extended outside of the commutative
case. If $\Cscr$ is an $R$-linear dg category, then there is a filtration
$\F^\star_\GM\HH(\Cscr/k)$ with associated graded pieces
$\gr^i_\GM\HH(\Cscr/k)\we\HH(\Cscr/R)\otimes_R\Lambda^i\L_{R/k}[-i]$.
This filtration is complete under mild hypotheses, for example if $\Cscr$ is smooth and proper over
$R$. Using this, we obtain a connection on periodic cyclic homology which we expect to be related to
that of Connes, Getzler, and Goodwillie; see Theorem~\ref{thm:gmhp}. In our case, the flatness of this connection, which is
Getzler's contribution, comes for free. Stronger results, which say that the connection is
unipotent in a strong sense, have been obtained by
Kaledin~\cite{kaledin-nhdr,kaledin-spectral}, Mathew~\cite{mathew-kaledin}, and
Kaledin--Konovalov--Magidson~\cite{kaledin-konovalov-magidson}.

Our main application of the Gauss--Manin connection is the following strong form of nilinvariance
in infinitesimal cohomology, which uses our forthcoming joint work on descent with Iwasa and Krause.
This work is summarized in Section~\ref{sec:descent}.

\begin{theorem}[Theorem~\ref{thm:nilinvariance}]
    Let $k\rightarrow R\rightarrow S$ be maps of connective derived commutative rings. If
    $R\rightarrow S$ is a canonical cover such that $\pi_0R\rightarrow\pi_0S$ is surjective, then $\Infhat_{R/k}\rightarrow\Infhat_{S/k}$ is an
    equivalence.
\end{theorem}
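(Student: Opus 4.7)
The plan is to apply the Gauss--Manin filtration of Theorem~\ref{thm:main}(b) to the composable maps $k\to R\to S$, reducing the theorem to the vanishing statement $\F^\star_\H\Infhat_{S/R}\we R$, concentrated in filtration degree zero. Granting this, Theorem~\ref{thm:main}(b) provides a complete multiplicative filtration $\F^\star_\GM\F^\star_\H\Infhat_{S/k}$ whose graded pieces become
$$\gr^i_\GM\F^\star_\H\Infhat_{S/k}\we R\widehat{\otimes}_R\ins^i\LSym^i(\L_{R/k}[-1])\we\ins^i\LSym^i(\L_{R/k}[-1]),$$
which are precisely the graded pieces of the Hodge filtration on $\F^\star_\H\Infhat_{R/k}$. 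Completeness of both filtrations, together with multiplicativity of the natural comparison $\F^\star_\H\Infhat_{R/k}\to\F^\star_\H\Infhat_{S/k}$, will then force the map to be an equivalence of filtered objects and hence of underlying objects.

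The content of the proof is therefore the identification $\F^\star_\H\Infhat_{S/R}\we R$. Classically, for a nilpotent thickening $R\twoheadrightarrow S$ of ordinary rings, $R$ is a terminal object of the infinitesimal site of $S/R$, so $\R\Gamma_\inf(S/R)\we R$; the canonical cover plus $\pi_0$-surjection hypothesis is the derived avatar of this condition. Concretely, using the descent result of Section~\ref{sec:descent} for $\Infhat_{-/R}$ along canonical covers, one has
$$\F^\star_\H\Infhat_{S/R}\we\lim\nolimits_{\Delta}\F^\star_\H\Infhat_{S^{\otimes_R\bullet}/R},$$
and each Čech term can be attacked by a square-zero and connectivity dévissage on $\tau_{\leq m}S^{\otimes_R n}$ that reduces to the classical nilpotent thickening case, where Hodge completion kills all contributions beyond $R$ in filtration degree zero.

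The main obstacle lies in the dévissage just described: even granting the canonical cover descent from the forthcoming joint work with Iwasa and Krause, one must verify that the Hodge-completed infinitesimal cohomology of derived square-zero extensions collapses to the base, and then propagate this along connective Postnikov towers to cover arbitrary $\pi_0$-surjective canonical covers. Once these inputs are in place, the Gauss--Manin filtration collapses onto $\F^\star_\H\Infhat_{R/k}$ as in the first paragraph, yielding the asserted equivalence $\Infhat_{R/k}\we\Infhat_{S/k}$.
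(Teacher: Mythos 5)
Your overall strategy (run the Gauss--Manin filtration of Theorem~\ref{thm:main}(b) for $k\to R\to S$, compute its graded pieces, and compare with the Hodge filtration on $\Infhat_{R/k}$ using completeness) is the paper's strategy, but the reduction you base it on is not correct. You claim the theorem reduces to $\F^\star_\H\Infhat_{S/R}\we R$ ``concentrated in filtration degree zero.'' As a statement about filtered objects this is false: the Hodge filtration on $\Infhat_{S/R}$ has graded pieces $\LSym^i_S(\L_{S/R}[-1])$, which are nonzero in positive weights whenever $\L_{S/R}\neq 0$ --- and that is exactly the situation of interest (e.g.\ $R$ a Koszul complex mapping to $\pi_0 R/I$, or any nilpotent thickening). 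What is true under the hypotheses is only the underlying-object statement $\Infhat_{S/R}\we R_S^\wedge\we R$; the filtration itself is far from trivial, it merely converges to $R$. Consequently your square-zero/Postnikov d\'evissage cannot establish the filtered collapse you need, and the subsequent cancellation $\gr^i_\GM\we R\widehat{\otimes}_R\ins^i\LSym^i(\L_{R/k}[-1])\we\ins^i\LSym^i(\L_{R/k}[-1])$ is unjustified: the completed tensor product in Theorem~\ref{thm:main}(b) is completion with respect to the Hodge filtration on $\F^\star_\H\Infhat_{S/R}$, so it cannot be evaluated from the underlying object $\Infhat_{S/R}\we R$ alone. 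This completion is precisely where the content sits.

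The paper's proof handles this point head-on: by Lemma~\ref{lem:adams_completion}(b) (which uses the $\pi_0$-surjectivity of $R\to S$), the graded piece $\Infhat_{S/R}\widehat{\otimes}_R\LSym^i(\L_{R/k}[-1])$ is identified with the Adams-type $S$-completion $(\LSym^i(\L_{R/k}[-1]))_S^\wedge$, and then the canonical-cover hypothesis together with bounded-belowness of $\LSym^i(\L_{R/k}[-1])$ shows the completion map $\LSym^i(\L_{R/k}[-1])\to(\LSym^i(\L_{R/k}[-1]))_S^\wedge$ is an equivalence. With the graded pieces so identified, completeness of both filtrations gives the equivalence $\Infhat_{R/k}\we\Infhat_{S/k}$, as in your last step (multiplicativity is not needed there). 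So the fix is not to prove a filtered triviality of $\F^\star_\H\Infhat_{S/R}$, but to identify each completed-tensor graded piece with an $S$-completion and then remove the completion using the canonical-cover hypothesis; your proposed descent-plus-d\'evissage input is essentially what Lemmas~\ref{lem:connective_complete} and~\ref{lem:adams_completion} package, but aimed at the wrong target.
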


As $\dRhat\we\Infhat$ in characteristic zero, this implies the corresponding result in derived de Rham
cohomology in characteristic zero, which is Kashiwara's Lemma. This in turn is related to Simpson's de Rham stack approach to
de Rham cohomology in characteristic zero~\cite{simpson_dr}. We will return to stacky connections
elsewhere. However, note that the theory is developed in great detail by Gaitsgory and Rozenblyum
in~\cite{gaitsgory-rozenblyum-crystals}. In their theory, the Gauss--Manin connection arises from the
shriek-pushforward of an induced map between de Rham stacks.

We use Theorem~\ref{thm:nilinvariance} to revisit Bhatt's results
from~\cite{bhatt-completions} and extend them to infinitesimal cohomology.
For example, we give a new proof of the comparison between Hodge-complete derived de Rham
cohomology and Hartshorne's algebraic de Rham cohomology in characteristic zero.
These mixed characteristic extensions were known to Bhatt via alternative methods and will
appear in his forthcoming joint work with Blickle, Schwede, and Tucker.

In characteristic $p$, we use nilinvariance to improve our results
from~\cite{antieau_crystallization}.

\begin{theorem}[Theorem~\ref{thm:charp}]
    Suppose that $k$ is a perfect field and $R$ is a connective derived commutative $k$-algebra
    such that $\pi_0k$ is finitely presented as a commutative $k$-algebra. Then, $\Infhat_{R/k}\we
    R^\perf$.
\end{theorem}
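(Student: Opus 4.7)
The plan is to reduce to the classical reduced case via nilinvariance and then invoke the main theorem of \cite{antieau_crystallization}. Let $A = (\pi_0 R)^{\red}$ denote the reduction of the classical $k$-algebra $\pi_0 R$.

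\textbf{Step 1: Reduction to $A$ via nilinvariance.} Consider the composition
\[
    R \longrightarrow \pi_0 R \longrightarrow A.
\]
The truncation $R \to \pi_0 R$ is a canonical cover and is an isomorphism on $\pi_0$. The quotient $\pi_0 R \to A$ is a surjection with nilpotent kernel (the nilradical of the Noetherian ring $\pi_0 R$, which is finitely presented over the field $k$), so it too is a canonical cover surjective on $\pi_0$. Two applications of Theorem~\ref{thm:nilinvariance} yield
\[
    \Infhat_{R/k} \simeq \Infhat_{\pi_0 R/k} \simeq \Infhat_{A/k}.
\]

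\textbf{Step 2: The reduced classical case.} The $k$-algebra $A$ is a reduced, finitely presented classical $k$-algebra over the perfect field $k$. In exactly this setting, the main result of \cite{antieau_crystallization} computes
\[
    \Infhat_{A/k} \simeq A^{\perf}.
\]

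\textbf{Step 3: Identifying perfections.} The perfection $R^{\perf} = \colim(R \xrightarrow{F} R \xrightarrow{F} \cdots)$ of a connective derived $\FF_p$-algebra is discrete and depends only on the reduction of $\pi_0$: the higher homotopy of $R$ is killed under the Frobenius colimit, giving $R^{\perf} \simeq (\pi_0 R)^{\perf}$, and Frobenius annihilates the nilradical, giving $(\pi_0 R)^{\perf} \simeq A^{\perf}$. Stringing the equivalences together yields $\Infhat_{R/k} \simeq R^{\perf}$.

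The main obstacle is verifying that both reduction maps in Step 1 qualify as canonical covers in the sense of Section~\ref{sec:descent}; once this is in place, invoking Theorem~\ref{thm:nilinvariance} is automatic and the remaining identifications of perfections are routine. A secondary subtlety is the assertion in Step 3 that the perfection of a derived $\FF_p$-algebra collapses to the perfection of the reduced classical $\pi_0$; this is well known but requires care about how Frobenius acts on higher homotopy in the colimit.
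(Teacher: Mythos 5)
Your Step 1 and Step 3 are fine and match the paper's own first reduction: nilinvariance (Theorem~\ref{thm:nilinvariance}) handles $R\rightarrow\pi_0R$, and the identification $R^\perf\we(\pi_0R)^\perf$ (via coconnectivity of perfect derived $\bF_p$-algebras, or your Frobenius-colimit argument) is routine; passing further to $(\pi_0R)^{\red}$ is also harmless, since a quotient by a nilpotent ideal is descendable and hence a canonical cover.

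The genuine gap is Step 2. There is no result in \cite{antieau_crystallization} computing $\Infhat_{A/k}\we A^\perf$ for an arbitrary reduced finitely presented $k$-algebra $A$: the relevant result there, \cite[Thm.~10.3]{antieau_crystallization}, requires $A$ to be \emph{quasisyntomic} over $k$, and a reduced finitely presented algebra over a perfect field need not be quasisyntomic (any reduced non-lci singularity, e.g.\ the coordinate axes in $\bA^3$, has cotangent complex of unbounded Tor-amplitude). Removing the quasisyntomic hypothesis is precisely the content of Theorem~\ref{thm:charp}, so your Step 2 is circular. The paper supplies the missing argument as follows: after reducing to $R=\pi_0R$ classical and finitely presented, choose a surjection $S\twoheadrightarrow R$ from a smooth $k$-algebra with kernel $I$ and pass to the $I$-adic completion $S_I^\wedge$. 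Because $S$ is a $G$-ring, Nagata plus N\'eron--Popescu show $S\rightarrow S_I^\wedge$ is ind-smooth, so $S_I^\wedge$ \emph{is} quasisyntomic (indeed $\L_{S_I^\wedge/k}$ is finitely presented projective by $F$-finiteness), and the prequel gives $(S_I^\wedge)^\perf\we\Infhat_{S_I^\wedge/k}$. One then needs the Gauss--Manin connection for $k\rightarrow S_I^\wedge\rightarrow R$ together with Lemma~\ref{lem:adams_completion} (and the $R$-completeness of $S_I^\wedge$, via Lemma~\ref{lem:classical}) to conclude that $\Infhat_{S_I^\wedge/k}\rightarrow\Infhat_{R/k}$ is an equivalence, and finally $(S_I^\wedge)^\perf\we\lim_n(S/I^n)^\perf\we R^\perf$. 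Without some substitute for this completed-smooth-presentation step, your reduction to the reduced classical case does not bring you into the range where any known computation of $\Infhat$ applies.
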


This was suggested to us a few years ago by Akhil Mathew and has been
obtained independently by Jiaqi Fu~\cite[Thm.~1.1]{fu_darboux} with somewhat stronger conditions on
$R$.

We also use the completeness results to give a conceptual construction of the Quillen spectral sequence
(see~\cite[Thm.~6.3]{quillen-cohomology} or~\cite[\href{https://stacks.math.columbia.edu/tag/08RC}{Tag
08RC}]{stacks-project})
and another definition of the HKR filtration.

\begin{theorem}[Theorem~\ref{thm:quillen}]\label{thm:quillen_intro}
    Suppose that $k\rightarrow R$ is a surjective map of commutative rings. The Quillen spectral
    sequence $$\E^1_{s,t}=\pi_{s+t}(\LSym^{-s}(\L_{R/k}))\Rightarrow\Tor_{s+t}^k(R,R)$$
    is the spectral sequence associated to the filtration $\F^\star_\H\Inf_{R/R\otimes_kR}$, which
    is complete.
\end{theorem}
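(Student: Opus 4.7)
The plan is to identify the filtered object $\F^\star_\H\Inf_{R/(R\otimes_k R)}$, check that its graded pieces are sufficiently connective to force completeness, determine the underlying object via nilinvariance, and match the associated spectral sequence with Quillen's.

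The main cotangent computation is $\L_{R/(R\otimes_k R)}\simeq\L_{R/k}[1]$: the composition $R\to R\otimes_k R\xrightarrow{m}R$ is the identity (so $\L_{R/R}=0$), and base change of $k\to R$ along itself gives $\L_{(R\otimes_k R)/R}\simeq(R\otimes_k R)\otimes_R\L_{R/k}$, whence the cotangent fiber sequence delivers the shift. The Hodge graded-piece formula $\gr^i_\H\Infhat_{X/Y}\simeq\LSym^i(\L_{X/Y}[-1])$, obtainable by specializing Theorem~\ref{thm:main}(b) to $S=R$ (so that $\Infhat_{R/R}=R$ collapses the GM filtration onto the Hodge filtration), then gives
\[
\gr^i_\H\Infhat_{R/(R\otimes_k R)}\simeq\LSym^i\L_{R/k},
\]
matching the $\E^1$-input of the Quillen spectral sequence. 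Since $k\to R$ is surjective, $\Omega^1_{R/k}=0$ and $\L_{R/k}$ is $1$-connective, so $\LSym^i\L_{R/k}$ is $i$-connective. Hence the filtration is complete, and in particular $\Inf_{R/(R\otimes_k R)}\simeq\Infhat_{R/(R\otimes_k R)}$.

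To identify the underlying object, observe that $\pi_0(R\otimes_k R)=R$ (because $k\to R$ is surjective), so $m$ is an isomorphism on $\pi_0$. Assuming $m$ is a canonical cover in the sense of Section~\ref{sec:descent}, Theorem~\ref{thm:nilinvariance} applied with base $R\otimes_k R$ yields
\[
R\otimes_k R=\Infhat_{(R\otimes_k R)/(R\otimes_k R)}\xrightarrow{\sim}\Infhat_{R/(R\otimes_k R)},
\]
and combined with completeness gives $\Inf_{R/(R\otimes_k R)}\simeq R\otimes_k R$, whose homotopy groups are $\Tor_*^k(R,R)$. The spectral sequence of the filtration is therefore $\E^1_{s,t}=\pi_{s+t}(\LSym^{-s}\L_{R/k})\Rightarrow\Tor_{s+t}^k(R,R)$.

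The main obstacle will be (i) verifying that $m\colon R\otimes_k R\to R$ qualifies as a canonical cover in the sense required by Theorem~\ref{thm:nilinvariance}, and (ii) matching this spectral sequence with Quillen's original construction from a simplicial polynomial resolution of $R$ over $k$, rather than merely checking coincidence of $\E^1$-pages and abutments. For (ii) I would exhibit the Hodge filtration as agreeing with Quillen's augmentation-ideal filtration on the simplicial resolution via the functoriality of $\Inf$ in polynomial resolutions.
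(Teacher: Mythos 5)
Your outline matches the paper's up to and including the computation $\L_{R/(R\otimes_kR)}\we\L_{R/k}[1]$ and the identification of the Hodge graded pieces with $\LSym^i(\L_{R/k})$, but the completeness step contains a genuine gap. You infer completeness of $\F^\star_\H\Inf_{R/(R\otimes_kR)}$ from the fact that the associated graded pieces $\LSym^i(\L_{R/k})$ are $i$-connective. That inference is invalid: the graded pieces of a filtration say nothing about $\lim_s\F^s$ (a constant nonzero filtration has vanishing graded pieces and is not complete). What one actually needs is that the filtration \emph{stages} $\F^s_\H\Inf_{R/(R\otimes_kR)}$ are $s$-connective, and this is exactly the content of Lemma~\ref{lem:connective_complete}, whose proof is a cell-attachment induction on the derived commutative ring level (building $R$ from $R\otimes_kR$ by attaching cells of dimension $\geq 2$, using base change and the analysis of $\LSym^\inf$ of a free algebra on a $2$-connective object), applicable here because $\pi_0(R\otimes_kR)\to R$ is an isomorphism, so the fiber of the multiplication map is $1$-connective. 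This hypothesis on the fiber is essential (cf.\ the failure for $\Infhat_{\bQ[x^{\pm1}]/\bQ}$), so the completeness claim cannot be waved through from the graded pieces alone; your proof needs this lemma or an equivalent argument.

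The rest of your route differs from the paper's only in how the abutment is identified, and there your flagged obstacle (i) is real but easily discharged: the multiplication $R\otimes_kR\to R$ has $1$-connective fiber, hence is a canonical cover as in the sketch of Theorem~\ref{thm:descent}, so the nilinvariance (or the example following Lemma~\ref{lem:adams_completion}) does apply. The paper avoids this machinery altogether by invoking the Poincar\'e lemma for \emph{uncompleted} infinitesimal cohomology, $\Inf_{R/A}\we A$, with $A=R\otimes_kR$; this identifies the abutment as $R\otimes_kR$ directly, independently of completeness, which is then used only for convergence. Finally, your obstacle (ii) --- matching with Quillen's simplicial construction on the nose --- is not addressed in the paper either, which simply records that the resulting spectral sequence is a standard reindexing of Quillen's, so that is not a gap relative to the paper's standard.
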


In fact, $k$ and $R$ can be connective derived commutative rings such that
$\pi_0k\rightarrow\pi_0R$ is surjective in Theorem~\ref{thm:quillen_intro}.
The associated five-term exact sequence of low degree terms has applications in commutative
algebra; see for example~\cite{herzog}.

\begin{theorem}[Theorem~\ref{thm:hkr}]\label{thm:hkr_main}
    Let $k\rightarrow R$ be a map of derived commutative rings. The HKR filtration on
    $\HH(R/k)$ is equivalent to the Hodge filtration on the infinitesimal cohomology $\Inf_{R/\HH(R/k)}$.
\end{theorem}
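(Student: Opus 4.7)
The plan is to identify both filtered commutative rings by combining a cotangent-complex computation with a universal property of Hodge-filtered infinitesimal cohomology. I would first compute $\L_{R/\HH(R/k)}$. The augmentation $\varepsilon\colon\HH(R/k)\to R$ has the unit $\eta\colon R\to\HH(R/k)$ as a section, so transitivity applied to $R\xrightarrow{\eta}\HH(R/k)\xrightarrow{\varepsilon}R$ together with $\L_{R/R}\simeq 0$ gives $\L_{R/\HH(R/k)}\simeq(R\otimes_{\HH(R/k)}\L_{\HH(R/k)/R})[1]$. The pushout square $\HH(R/k)\simeq R\otimes^{\L}_{R\otimes_k R}R$, with both structural maps the multiplication, combined with the identification $\L_{R/(R\otimes_k R)}\simeq\L_{R/k}[1]$ (from transitivity for $k\to R\otimes_k R\to R$, whose first map $\L_{R/k}\oplus\L_{R/k}\to\L_{R/k}$ is the sum, surjective with kernel $\L_{R/k}$), gives $\L_{\HH(R/k)/R}\simeq\HH(R/k)\otimes_R\L_{R/k}[1]$, and hence $\L_{R/\HH(R/k)}\simeq\L_{R/k}[2]$.

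Using the d\'ecalage identity $\LSym^i(V[1])\simeq\Lambda^i V[i]$ for derived symmetric powers of shifted modules, the Hodge graded pieces of $\Inf_{R/\HH(R/k)}$ are
$$\gr^i_\H\Inf_{R/\HH(R/k)}\simeq\LSym^i\bigl(\L_{R/\HH(R/k)}[-1]\bigr)\simeq\LSym^i(\L_{R/k}[1])\simeq\Lambda^i\L_{R/k}[i],$$
which matches the HKR graded pieces $\gr^i_\HKR\HH(R/k)\simeq\Lambda^i\L_{R/k}[i]$.

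To promote this agreement on associated graded pieces to an equivalence of filtered commutative rings, I would invoke the universal property of Hodge-filtered infinitesimal cohomology as representing, in a suitable derived sense, filtered PD thickenings of $R$ over $A$. Applied with $A=\HH(R/k)$, the HKR-filtered $\HH(R/k)$, viewed as a complete multiplicative filtered $\HH(R/k)$-algebra augmented to $R$ in weight zero via the fiber sequence $\F^1_\HKR\HH(R/k)\to\HH(R/k)\xrightarrow{\varepsilon} R$, supplies such a thickening, producing a canonical filtered comparison map to or from $\F^\star_\H\Inf_{R/\HH(R/k)}$. By the graded-pieces computation this map is an equivalence.

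The main obstacle is articulating and verifying this universal property in the derived setting: specifically, one must identify the HKR filtration on $\HH(R/k)$ as, in precisely the sense required by $\F^\star_\H\Inf$, a filtered derived PD thickening of $R$ over $\HH(R/k)$. This is the only nontrivial content; once it is in place, the multiplicativity and completeness of both filtrations, together with the matching of graded pieces established above, reduce the remaining comparison to a formality.
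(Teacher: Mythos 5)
Your cotangent complex computation is correct and matches what the paper uses in its own argument: $\L_{R/\HH(R/k)}\simeq\L_{R/k}[2]$, so the Hodge graded pieces of $\Inf_{R/\HH(R/k)}$ are $\LSym^i(\L_{R/k}[1])\simeq\Lambda^i\L_{R/k}[i]$, abstractly the same as the HKR graded pieces. The construction of the comparison map is also the right idea, although your phrasing is off: the universal property of $\F^\star_\H\Inf$ has nothing to do with divided power (PD) thickenings, which belong to the crystalline/de Rham adjunction; what is needed is that $\HH_\fil(R/k)$ is an \emph{infinitesimal} filtered derived commutative $\HH(R/k)$-algebra with $\gr^0\simeq R$, and this is supplied by the structural results on the HKR filtration already in place (the upgrade of $\HH_\fil(-/k)$ to a functor landing in $\cMod_{\bT_\fil^\vee}(\FDAlg_k^\inf)$), so the point you flag as "the main obstacle" is in fact available and yields a canonical map $\F^\star_\H\Inf_{R/\HH(R/k)}\rightarrow\HH_\fil(R/k)$.

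The genuine gap is your last step. Knowing that the two filtrations have abstractly equivalent graded pieces does not show that the \emph{canonical} map just constructed induces equivalences on graded pieces, and that is the entire content of the comparison; "matching of graded pieces" plus multiplicativity and completeness is not a formality that closes this. The actual argument is: by multiplicativity (both associated gradeds are freely generated from weight one, $\LSym^i$ of $\gr^1$ on each side) it suffices to check the map on $\gr^1$, i.e.\ on a map $\L_{R/\HH(R/k)}[-1]\rightarrow\L_{R/k}[1]$ natural in $R$; since both sides commute with colimits in $R$ one reduces to the universal case $k=\bZ$, $R=\bZ[x]$, where one identifies $\L_{\bZ[x]/\HH(\bZ[x]/\bZ)}\simeq\bZ[x][2]$ on the generator $\sigma^2\d x$ and checks explicitly that it maps to a generator of $\gr^1_\HKR\HH(\bZ[x]/\bZ)\simeq\bZ[x]\cdot\sigma^2\d x$; completeness of the relevant filtration in this connective case (needed for the convergence of the computation) is supplied by the connectivity lemma. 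Your write-up omits this verification entirely, and without it the proof does not go through: one could have the correct graded pieces on both sides while the comparison map induces, say, the zero map on $\gr^1$. Once the $\gr^1$ check is done, the passage from graded equivalence to filtered equivalence follows by induction on the cofiber sequences $\F^{i+1}\to\F^i\to\gr^i$ starting from the $\F^0$ statement (the Poincar\'e lemma identifying $\Inf_{R/\HH(R/k)}\simeq\HH(R/k)$), not from completeness.
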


The Hodge filtration on $\Inf_{R/\HH(R/k)}$ is naturally $S^1$-equivariant. Indeed, we have
$$\F^\star_\H\Inf_{R/\HH(R/k)}\we\HH(R/\F^\star_\H\Inf_{R/k}),$$
the Hochschild homology of $R$ relative to the augmentation $\F^\star_\H\Inf_{R/k}\rightarrow R$
computed in the symmetric monoidal category $\FMod_k$.
We do not see the filtered circle action from this perspective.

The To\"en--Vezzosi theory~\cite{tv-book} of derived foliations is built algebraically around filtered derived
commutative rings which look like Hodge-filtered derived de Rham cohomology or infinitesimal
cohomology. These foliations come in crystalline or infinitesimal flavors.
The proof of Theorem~\ref{thm:main} applies directly to these to yield Gauss--Manin connections 
associated to maps of derived foliations relative to $k$. We do not further explore the associated theory
here, but note that in the infinitesimal case Theorem~\ref{thm:hkr_main} provides a version of
Hochschild homology for derived foliations which looks worth pursuing further.

Finally, we explain how the $t$-de Rham comparison theorem in characteristic zero prismatic
cohomology of Wa{\ss}muth~\cite{wasmuth-thesis} is explained by
the Gauss--Manin connection.

\begin{theorem}[Theorem~\ref{thm:our_prism}]
    Let $k$ be a characteristic zero field and set $A=k\llbracket t\rrbracket$ and
    $\overline{A}=A/t\iso k$. Let $R$ be a smooth commutative $\overline{A}$-algebra, viewed as an
    $A$-algebra via restriction of scalars along $A\rightarrow\overline{A}$. Then, there is a
    natural equivalence $\R\Gamma((R/A)_\Prism,\Oscr)\we\dRhat_{R/A}$. The Gauss--Manin connection
    for the map $A\rightarrow R\llbracket t\rrbracket\rightarrow R$ recovers the $t$-de Rham
    comparison theorem.
\end{theorem}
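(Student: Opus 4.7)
The plan is to handle the two assertions in order: first, identify $\R\Gamma((R/A)_\Prism,\Oscr)$ with $\dRhat_{R/A}$, and then, applying Theorem~\ref{thm:main}(c) to the composition $A\to R\llbracket t\rrbracket\to R$, unwind the resulting Gauss--Manin coherent cochain complex into the shape of Wa\ss muth's $t$-de Rham comparison.

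For the first claim, I would verify that, in characteristic zero, Wa\ss muth's relative prismatic site for $R$ over the prism $(A,(t))$ is equivalent to the infinitesimal site of $R$ over $A$: objects are $t$-adically complete $A$-algebras $B$ equipped with a map from $R$ modulo the prismatic ideal, and the characteristic zero hypothesis collapses the $\delta$-structure / Frobenius data, so one is left with the nilpotent-thickening data defining $\Infhat_{R/A}$. Combining this with the equivalence $\Infhat\we\dRhat$ in characteristic zero (which is developed in the body of the paper and classical for smooth maps), one concludes $\R\Gamma((R/A)_\Prism,\Oscr)\we\Infhat_{R/A}\we\dRhat_{R/A}$.

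For the Gauss--Manin part, apply Theorem~\ref{thm:main}(c) to $A\to R\llbracket t\rrbracket\to R$, where $R\llbracket t\rrbracket=R\widehat{\otimes}_k A$ is the base change and the surjection is $t\mapsto 0$. The filtration $\F^\star_\GM\F^\star_\H\dRhat_{R/A}$ is complete and multiplicative with
\[
\gr^i_\GM\F^\star_\H\dRhat_{R/A}\we\F^\star_\H\dRhat_{R/R\llbracket t\rrbracket}\widehat{\otimes}_{R\llbracket t\rrbracket}\ins^i\Lambda^i\L_{R\llbracket t\rrbracket/A}[-i].
\]
The two building blocks are then computed explicitly. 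Since $t$ is a regular element and $k$ has characteristic zero, $\dRhat_{R/R\llbracket t\rrbracket}$ is the completion of the divided power envelope of $R\llbracket t\rrbracket$ along $(t)$, which in characteristic zero collapses to $R\llbracket t\rrbracket$ equipped with Hodge filtration $\F^n_\H=(t)^n$. On the other hand, the smoothness of $R/k$ and the base change identity $R\llbracket t\rrbracket=R\widehat{\otimes}_k A$ give $\L_{R\llbracket t\rrbracket/A}\we\Omega^1_{R/k}\otimes_R R\llbracket t\rrbracket$, and hence $\Lambda^i\L_{R\llbracket t\rrbracket/A}\we\Omega^i_{R/k}\otimes_R R\llbracket t\rrbracket$. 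Consequently the Gauss--Manin filtration unwinds, as a coherent cochain complex, to
\[
R\llbracket t\rrbracket\xrightarrow{\nabla}\Omega^1_{R/k}\otimes_R R\llbracket t\rrbracket\xrightarrow{\nabla}\Omega^2_{R/k}\otimes_R R\llbracket t\rrbracket\xrightarrow{\nabla}\cdots.
\]

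The final step is to identify the differential $\nabla$ with the $k$-linear de Rham differential of $\Omega^\bullet_{R/k}$, extended $R\llbracket t\rrbracket$-linearly; totalizing then yields the equivalence $\dRhat_{R/A}\we\Omega^\bullet_{R/k}\widehat{\otimes}_k k\llbracket t\rrbracket$, which combined with the first step is exactly Wa\ss muth's $t$-de Rham comparison. The main obstacle is this last identification: one must check that, under the explicit construction of the Gauss--Manin filtration used in Theorem~\ref{thm:main}(c), the induced map on associated gradeds agrees with the Katz--Oda differential. I expect to handle this by reducing to the smooth polynomial case $R=k[x_1,\ldots,x_n]$, where both differentials are pinned down by multiplicativity, compatibility with the Kodaira--Spencer class discussed after Theorem~\ref{thm:main}, and the requirement $\nabla(x_i)=\d x_i$, and then bootstrapping via flat base change and the sheaf property of $\dRhat$ for smooth $R/k$.
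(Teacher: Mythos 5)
Your outline breaks at the decisive step: the identification of the Gauss--Manin differential. After correctly computing that the associated graded pieces give a complex with terms $\Omega^i_{R/k}\llbracket t\rrbracket$, you declare that $\nabla$ is the $k$-linear de Rham differential extended $R\llbracket t\rrbracket$-linearly, with $\nabla(x_i)=\d x_i$, and that the totalization is $\Omega^\bullet_{R/k}\widehat{\otimes}_k k\llbracket t\rrbracket$. This is false, and it contradicts the very statement you are proving: the $t$-de Rham comparison has differential $t\d$, not $\d$. The two are not quasi-isomorphic integrally (the rescaling $\omega\mapsto t^{-i}\omega$ only intertwines them after inverting $t$, which is exactly Wa{\ss}muth's de Rham comparison (iii)); reducing mod $t$, your complex would compute $\H^*_{\dR}(R/k)$ while the correct one has zero differential and recovers $\Omega^*_{R/k}$, as the Hodge--Tate comparison demands. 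The mechanism you are missing is Griffiths transversality: in the bifiltered construction of Theorem~\ref{thm:main}(c), the Hodge filtration on $\dRhat_{R/R\llbracket t\rrbracket}\we R\llbracket t\rrbracket$ is the $t$-adic filtration, and $\nabla$ shifts it by one, so $\nabla$ is forced to be divisible by $t$; the computation in the test case $R=\overline{A}[x]$ then gives $\nabla(x)=t\,\d x$. Your proposed normalization $\nabla(x_i)=\d x_i$ is not available, precisely because $\d t$ (the Kodaira--Spencer direction for $A\to R\llbracket t\rrbracket\to R$) intervenes.

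The first half is also not an argument as written. Wa{\ss}muth's characteristic-zero site, as defined in the paper, carries no $\delta$-structure to ``collapse,'' and its objects are flat $t$-complete $A$-algebras $B$ with a map $R\to B/t$ --- these are not nilpotent thickenings, so the site is not literally the infinitesimal site of $R$ over $A$, and the asserted equivalence of topoi is essentially the content of the theorem rather than a reduction of it. The paper proceeds differently: each test object $B$ with its $t$-adic filtration is an infinitesimal/crystalline filtered derived commutative ring with $\gr^0=B/t$ receiving a map from $R$, so the universal property of $\F^\star_\H\dRhat_{R/A}$ produces a natural comparison map $\dRhat_{R/A}\to\R\Gamma((R/A)_\Prism,\Oscr)$; one then proves this map is an equivalence by first establishing the $t$-de Rham form of $\dRhat_{R/A}$ via the Gauss--Manin connection (as above), reducing mod $t$, and invoking Wa{\ss}muth's Hodge--Tate comparison together with $t$-completeness of both sides. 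In particular the two halves of the statement are not independent in the paper's proof: the site-theoretic identification is deduced from the Gauss--Manin computation, not proved beforehand. To repair your proposal you would need both to supply an actual construction of the comparison map (the universal-property route is the natural one) and to redo the identification of $\nabla$ using Griffiths transversality, landing on $t\d$ rather than $\d$.
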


\paragraph{Outline.} We prove symmetric monoidality and base change properties for $\HH$,
$\Inf$, and $\dR$ in Section~\ref{sec:sym}. We introduce a naive version of the Gauss--Manin
connection in Section~\ref{sec:horizontal}. This is enough to recover a connection on periodic cyclic
homology. In Section~\ref{sec:filtered_horizontal}, we give the general construction of the
Gauss--Manin connection and discuss Griffiths transversality. We give a universal property for the
Gauss--Manin connection in Section~\ref{sec:derived} and discuss descent properties of $\HH$,
$\Infhat$, and $\dRhat$ in Section~\ref{sec:descent}, based on joint work with Iwasa and Krause.
Our applications are given in Sections~\ref{sec:completions},~\ref{sec:quillen},~\ref{sec:hkr},
and~\ref{sec:prism}.

\paragraph{Notation.}
We adopt all of the notation and terminology from the
prequel~\cite{antieau_crystallization}.

\paragraph{Acknowledgments.} This paper was born out of a conversation
with Achim Krause in March 2020 which attempted to understand the approaches of
Krause--Nikolaus~\cite{krause-nikolaus} and
Liu--Wang~\cite{liu-wang} to absolute prismatic computations using descent from $\bS[z]$
as a Gauss--Manin connection. We thank
Bhargav Bhatt, Lukas Brantner, Rankeya Datta, Sanath Devalapurkar, Elden Elmanto, Jiaqi Fu, Haoyang Guo, Achim Krause,
Ryomei Iwasa, Srikanth Iyengar, Deven Manam, Akhil Mathew, Thomas Nikolaus, Joost Nuiten, Arpon
Raksit, Nick Rozenblyum, and Bertrand To\"en for discussions over the years.

This project was supported by NSF grants DMS-2120005, DMS-2102010, and
DMS-2152235, Simons Fellowships 666565 and 00005925, and the Simons Collaboration on Perfection.

\section{Symmetric monoidality of (co)homology}\label{sec:sym}

We begin by extending the universal properties of $\F^\star_\H\dR_{R/k}$ to a relative situation.
Below, $\FDAlg_{\F^\star_\H\dR_{R/k}}^\crys$ denotes the under category $(\FDAlg_k^\crys)_{\F^\star_\H\dR_{R/k}/}$
and similar for the completed variants.

\begin{proposition}\label{prop:gmraksit}
    Let $k$ be a derived commutative ring and let $R\in\DAlg_k$.
    There is a natural adjunction
    $$\F^\star_\H\dR_{(-)/k}\colon\DAlg_R\rightleftarrows\FDAlg_{\F^\star_\H\dR_{R/k}}^\crys\colon\gr^0,$$
    which upon completion gives a natural adjunction
    $$\F^\star_\H\dRhat_{(-)/k}\colon\DAlg_R\rightleftarrows\widehat{\FDAlg}_{\F^\star_\H\dRhat_{R/k}}^{\crys}\colon\gr^0.$$
\end{proposition}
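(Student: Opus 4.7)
The plan is to bootstrap from the absolute universal property of $\F^\star_\H\dR_{(-)/k}$ established (after Raksit) in the prequel~\cite{antieau_crystallization}, which provides an adjunction
$$\F^\star_\H\dR_{(-)/k}\colon\DAlg_k\rightleftarrows\FDAlg_k^\crys\colon\gr^0$$
together with its Hodge-completed analogue $\F^\star_\H\dRhat_{(-)/k}\dashv\gr^0$. The crucial input is the identification $\gr^0\F^\star_\H\dR_{R/k}\we R$, which exhibits the component at $R$ of the unit of the absolute adjunction as the identity of $R\in\DAlg_k$.

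Next I would invoke the general fact that any adjunction $F\colon C\rightleftarrows D\colon G$ of $\infty$-categories restricts, for each $c\in C$, to an adjunction on slice categories $F\colon C_{c/}\rightleftarrows D_{Fc/}\colon G$, where the right adjoint sends an object $Fc\to d$ of $D_{Fc/}$ to the composite $c\xrightarrow{\eta_c}GFc\to Gd$ in $C_{c/}$. Applying this with $F=\F^\star_\H\dR_{(-)/k}$, $G=\gr^0$, and $c=R$, and unwinding the definition $\FDAlg_{\F^\star_\H\dR_{R/k}}^\crys=(\FDAlg_k^\crys)_{\F^\star_\H\dR_{R/k}/}$, yields the first adjunction: an $R$-algebra map $S\to\gr^0 B$ corresponds to a map $\F^\star_\H\dR_{S/k}\to B$ under $\F^\star_\H\dR_{R/k}$, with the $R$-structure on $\gr^0 B$ being precisely the composite $R\we\gr^0\F^\star_\H\dR_{R/k}\to\gr^0 B$.

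For the completed version I would appeal to the fact that Hodge completion is a symmetric monoidal reflective localization of $\FDAlg_k^\crys$ at the class of maps inducing equivalences on associated gradeds. For $B$ already complete, the canonical map $\F^\star_\H\dR_{S/k}\to\F^\star_\H\dRhat_{S/k}$ induces an equivalence on mapping spaces into $B$, so the completed adjunction follows from the uncompleted one by restricting to complete objects on the target side, using that $\gr^0$ is insensitive to completion. The argument is entirely formal once the absolute universal property is granted, so I do not foresee a serious obstacle; the only point requiring care is to verify that the $\infty$-categorical slice-category adjunction recovers the correct notion of $\F^\star_\H\dR_{R/k}$-algebra map, which is immediate from the identification of $\eta_R$ with the canonical $R\to\gr^0\F^\star_\H\dR_{R/k}$.
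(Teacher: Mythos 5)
Your proposal is correct and follows essentially the same route as the paper: cite the absolute (Raksit-style) adjunction and its decompleted form from the prequel, then pass to under-categories at $R$ and at $\F^\star_\H\dR_{R/k}$ via the general slice-adjunction fact, using $(\DAlg_k)_{R/}\we\DAlg_R$ and the unit $R\we\gr^0\F^\star_\H\dR_{R/k}$. The only cosmetic difference is in the completed case, where the paper simply applies the same slice argument to Raksit's already-completed adjunction, whereas you deduce it from the uncompleted one by the (equally valid) reflective-localization observation that mapping out of $\F^\star_\H\dR_{S/k}$ into a complete object factors through the Hodge completion.
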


\begin{proof}
    Raksit's theorem~\cite[Thm.~5.3.6]{raksit} says that there is an adjunction
    $$\F^\star_\H\widehat{\dR}_{(-)/k}\colon\DAlg_k\rightleftarrows\widehat{\FDAlg}^\crys_k\colon\gr^0.$$
    This result is decompleted in Corollary~\cite[Cor.~11.7]{antieau_crystallization} to obtain an adjunction
    $$\F^\star_\H\dR_{(-)/k}\colon\DAlg_k\rightleftarrows\FDAlg^\crys_k\colon\gr^0.$$
    It is a fact (see Exercise~\ref{exercise:comma}) that if
    $F\colon\Cscr\rightleftarrows\Dscr\colon G$ is an adjunction between
    $\infty$-categories and
    $c\in\Cscr$, then there is an induced adjunction
    $F'\colon\Cscr_{c/}\rightarrow\Dscr_{F(c)/}\colon G'$, where
    $F'(c\rightarrow d)\we(F(c)\rightarrow F(d))$ and $G'(F(c)\rightarrow e)\we
    (c\rightarrow GF(c)\rightarrow G(e))$, the first map being the unit map of
    the original adjunction. This general fact is applied to the
    adjunctions above and the object $R$ of $\DAlg_k$ using that by definition
    $(\DAlg_k)_{R/}\we\DAlg_R$ and similarly
    $(\FDAlg^{\crys}_k)_{\F^\star_\H\dR_{R/k}/}\we\FDAlg^\crys_{\F^\star_\H\dR_{R/k}}$.
    The completed case is similar.
\end{proof}

\begin{exercise}\label{exercise:comma}
    Prove the general fact about induced adjunctions for comma categories used
    in the proof of Proposition~\ref{prop:gmraksit} by
    using the identification of the mapping anima
    $\Map_{\Cscr_{c/}}(c\xrightarrow{\alpha} d,c\xrightarrow{\beta}
    e)$ with the fiber of $\Map_\Cscr(d,e)\xrightarrow{f\mapsto f\circ\alpha}\Map_{\Cscr}(c,e)$
    over $\beta$. See also~\cite[Prop.~5.2.5.1]{htt}.
\end{exercise}

\begin{remark}
    If $S$ is a derived commutative $R$-algebra, then the proposition gives a
    universal property for $\F^\star_\H\dR_{S/k}$ as a crystalline filtered derived
    commutative $\F^\star_\H\dR_{R/k}$-algebra: if $\F^\star T$ is a
    crystalline filtered derived commutative $\F^\star_\H\dR_{R/k}$-algebra, then
    the anima of maps $\F^\star_\H\dR_{S/k}\rightarrow\F^\star T$ is naturally equivalent to the
    anima of maps $S\rightarrow\gr^0T$ of derived commutative $R$-algebras.
\end{remark}

\begin{corollary}
    If $\F^\star T$ is a crystalline filtered derived commutative ring, then the left
    adjoint of $$\DAlg_{\gr^0T}\leftarrow\FDAlg^\crys_{\F^\star T}\colon\gr^0$$ is given by the
    composition
    $$\DAlg_{\gr^0T}\xrightarrow{\F^\star_\H\dR_{-/\gr^0T}}\FDAlg^\crys_{\F^\star_\H\dR_{\gr^0T/\bZ}}\xrightarrow{(-)\otimes_{\F^\star_\H\dR_{\gr^0T/\bZ}}\F^\star
    T}\FDAlg^\crys_{\F^\star T},$$
    where the right arrow is extension of scalars along the canonical arrow
    $\F^\star_\H\dR_{\gr^0T/\bZ}\rightarrow\F^\star T$ induced by the universal property of de
    Rham cohomology.
\end{corollary}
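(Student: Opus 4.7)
The plan is to realize the displayed composition as a composition of two left adjoints whose composite right adjoint is visibly $\gr^0$. First, apply Proposition~\ref{prop:gmraksit} with $k=\bZ$ and $R=\gr^0T$, using that $\gr^0T$ is a derived commutative ring because $\F^\star T$ is crystalline. This produces the adjunction
$$\F^\star_\H\dR_{(-)/\bZ}\colon\DAlg_{\gr^0T}\rightleftarrows\FDAlg^\crys_{\F^\star_\H\dR_{\gr^0T/\bZ}}\colon\gr^0,$$
whose right adjoint is precisely the $\gr^0$ functor.

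Second, the canonical map $\alpha\colon\F^\star_\H\dR_{\gr^0T/\bZ}\to\F^\star T$ described in the statement is, by definition, the map in $\FDAlg^\crys$ corresponding via this universal property to the identity $\gr^0T\to\gr^0T=\gr^0(\F^\star T)$. Restriction and extension of scalars along $\alpha$ yield a standard base-change adjunction
$$(-)\otimes_{\F^\star_\H\dR_{\gr^0T/\bZ}}\F^\star T\colon\FDAlg^\crys_{\F^\star_\H\dR_{\gr^0T/\bZ}}\rightleftarrows\FDAlg^\crys_{\F^\star T}\colon\alpha^{*},$$
where $\alpha^{*}$ denotes restriction of scalars along $\alpha$.

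Composing these two adjunctions yields a composite left adjoint equal to the functor in the statement. The composite right adjoint sends $\F^\star U\in\FDAlg^\crys_{\F^\star T}$ to $\gr^0(\alpha^{*}\F^\star U)$, regarded as a $\gr^0T$-algebra via the structural map $\gr^0T\simeq\gr^0\F^\star_\H\dR_{\gr^0T/\bZ}\xrightarrow{\gr^0\alpha}\gr^0U$. Since restriction of scalars does not alter the underlying filtered object, $\gr^0(\alpha^{*}\F^\star U)=\gr^0U$. It remains to identify this composite right adjoint with $\gr^0\colon\FDAlg^\crys_{\F^\star T}\to\DAlg_{\gr^0T}$; equivalently, to show that the two $\gr^0T$-algebra structures on $\gr^0U$ agree. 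This reduces to verifying $\gr^0\alpha=\id_{\gr^0T}$, which is tautological from the construction of $\alpha$ as the map corresponding to $\id_{\gr^0T}$ under the universal property.

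The main obstacle is thus merely the last compatibility check; once in hand, the statement follows from the formal principle that a composition of left adjoints is left adjoint to the composition of the corresponding right adjoints.
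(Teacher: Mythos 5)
Your argument is correct and is exactly the intended one: the paper states this corollary without proof, as an immediate consequence of Proposition~\ref{prop:gmraksit} (applied with $k=\bZ$, $R=\gr^0T$) composed with the extension/restriction-of-scalars adjunction along the canonical map $\alpha\colon\F^\star_\H\dR_{\gr^0T/\bZ}\rightarrow\F^\star T$, and your check that the composite right adjoint is $\gr^0$ with the correct $\gr^0T$-algebra structure (via $\gr^0\alpha=\id$) is the only compatibility that needs saying.

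One caveat: the composite left adjoint you actually construct is $S\mapsto\F^\star_\H\dR_{S/\bZ}\otimes_{\F^\star_\H\dR_{\gr^0T/\bZ}}\F^\star T$, i.e.\ the first arrow is the \emph{absolute} functor $\F^\star_\H\dR_{-/\bZ}$, whereas the corollary as printed labels it $\F^\star_\H\dR_{-/\gr^0T}$. These are not the same functor, and taken literally the printed composition is not left adjoint to $\gr^0$: evaluating it at $S=\gr^0T$ gives $\ins^0\gr^0T\otimes_{\F^\star_\H\dR_{\gr^0T/\bZ}}\F^\star T$, which is generally not the initial object $\F^\star T$ of $\FDAlg^\crys_{\F^\star T}$ (take, e.g., $\F^\star T=\F^\star_\H\dR_{\gr^0T/\bZ}$), while a left adjoint must send $\gr^0T$ to $\F^\star T$. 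So the printed subscript is evidently a typo for $\bZ$, and your proof establishes the corrected statement; you should either note this explicitly rather than asserting your composite is ``equal to the functor in the statement,'' or, if you did intend the literal reading, that identification is where your argument would break down.
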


\begin{variant}
    There is an infinitesimal version of the story above; it is left to the reader to formulate it.
\end{variant}

\begin{variant}
    A similar result applies in Hochschild homology. Namely, given a derived commutative ring $k$
    and given $R\in\DAlg_k$, one obtains an adjunction
    $$\HH(-/k)\colon\DAlg_R\rightleftarrows (\cMod_{\bT^\vee}(\DAlg_k))_{\HH(R/k)/},$$
    where the right adjoint is obtained by forgetting the $S^1$-action and
    restricting along the map $R\rightarrow\HH(R/k)$ (implicitly chosen via
    the choice of a basepoint on $S^1$).
    This adjunction can be upgraded to an adjunction involving the HKR filtration
    $$\HH_\fil(-/k)\colon\DAlg_R\rightleftarrows(\cMod_{\bT_\fil^\vee}(\FDAlg_k^\inf))_{\HH_\fil(R/k)/}.$$
    The right adjoint is `forget the $\bT_\fil$-action and take $\F^0$'.
\end{variant}

Let $R\rightarrow S$ be a map in $\DAlg_k$. There is a well-known natural and
$\bT$-equivariant equivalence
$$\HH(S/k)\otimes_{\HH(R/k)}R\we\HH(S/R).$$ We call this the
\longdefidx{collapse formula}{Hochschild homology!collapse
formula} for Hochschild homology because it is obtained by
collapsing $S^1$ to a point in $\HH(R/k)\we{^{S^1}R}\rightarrow{^\ast R}\we R$
using the functoriality of tensors/copowers.

There are analogous statements for derived de Rham cohomology and its Hodge completion as well as for
infinitesimal cohomology and its Hodge completion.

\begin{proposition}[Collapse formula]\label{prop:reductioncollapse}
    Let $k\rightarrow R\rightarrow S$ be maps of derived commutative rings.
    There are natural equivalences
    \begin{itemize}
        \item   $\HH(S/k)\otimes_{\HH(R/k)}R\we\HH(S/R)$,
        \item   $\HH_\fil(S/k)\otimes_{\HH_\fil(R/k)}R\we\HH_\fil(S/R)$,
        \item   $\F^\star_\H\Inf_{S/k}\otimes_{\F^\star_\H\Inf_{R/k}}R\we\F^\star_\H\Inf_{S/R}$,
        \item $\F^\star_\H\Infhat_{S/k}\widehat{\otimes}_{\F^\star_\H\Infhat_{R/k}}R\we\F^\star_\H\Infhat_{S/R},$
        \item   $\F^\star_\H\dR_{S/k}\otimes_{\F^\star_\H\dR_{R/k}}R\we\F^\star_\H\dR_{S/R}$, and
        \item   $\F^\star_\H\dRhat_{S/k}\widehat{\otimes}_{\F^\star_\H\dRhat_{R/k}}R\we\F^\star_\H\dRhat_{S/R}$.
    \end{itemize}
\end{proposition}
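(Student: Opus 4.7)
The plan splits into three groups. For the two Hochschild bullets, I would exploit that $\HH(-/k)$ is the $S^1$-copower in $\DAlg_k$, with the structure map $R\to\HH(R/k)$ coming from the inclusion of a basepoint $\ast\hookrightarrow S^1$. The pushout $\HH(S/k)\otimes_{\HH(R/k)}R$ then literally collapses that basepoint in the bar construction for $S/k$, and level-by-level the elementary identity $S^{\otimes_k(n+1)}\otimes_{R^{\otimes_k(n+1)}}R\we S^{\otimes_R(n+1)}$ commutes with geometric realization to produce $\HH(S/R)$. The identical argument runs in the filtered setting, using that $\HH_\fil(-/k)$ is the filtered-circle copower in $\cMod_{\bT_\fil^\vee}(\FDAlg_k^\inf)$ as in the variant preceding this proposition.

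For the de Rham and infinitesimal bullets, I would argue by Yoneda from the universal property in Proposition~\ref{prop:gmraksit}. Given $\F^\star T\in\FDAlg^\crys_R$, maps from $\F^\star_\H\dR_{S/k}\otimes_{\F^\star_\H\dR_{R/k}}R$ to $\F^\star T$ are, by the universal property of pushouts, the same as $\F^\star_\H\dR_{R/k}$-algebra maps $\F^\star_\H\dR_{S/k}\to\F^\star T$, which by Proposition~\ref{prop:gmraksit} correspond to $R$-algebra maps $S\to\gr^0\F^\star T$. Applying Proposition~\ref{prop:gmraksit} again with $k$ replaced by $R$ shows that $\F^\star_\H\dR_{S/R}$ corepresents the same functor on $\FDAlg^\crys_R$, so the two are equivalent. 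The infinitesimal bullet is identical using the variant of Proposition~\ref{prop:gmraksit} for $\F^\star_\H\Inf$ flagged in the preceding variant block. For the Hodge-completed bullets I would apply Hodge completion to the already-established equivalences; completion commutes with the tensor product once rewritten using $\widehat{\otimes}$, because completion is a symmetric monoidal localization on $\FDAlg^\crys_k$.

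The main obstacle I anticipate is verifying that the tensor product $\F^\star_\H\dR_{S/k}\otimes_{\F^\star_\H\dR_{R/k}}R$, formed a priori in $\FDAlg^\crys_k$, actually lies in $\FDAlg^\crys_R$ with $\gr^0\we S$. Both claims reduce to the symmetric monoidality of $\FDAlg^\crys_k$ together with the fact that $\gr^0$ is symmetric monoidal and preserves these specific colimits, which should be part of the setup of~\cite{antieau_crystallization}. Once that is in hand, everything else is formal Yoneda bookkeeping with the universal property of Proposition~\ref{prop:gmraksit}.
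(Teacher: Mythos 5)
Your proposal is correct and follows essentially the same route as the paper: the de Rham and infinitesimal cases are proved exactly as in the text, by checking via Proposition~\ref{prop:gmraksit} that the pushout corepresents the same functor on crystalline filtered derived commutative $R$-algebras as $\F^\star_\H\dR_{S/R}$ (using that $\gr^0$ is symmetric monoidal and colimit-preserving to identify $\gr^0$ of the pushout with $S$), and the completed statements are deduced from the uncompleted ones since completion is a symmetric monoidal localization. Your treatment of the Hochschild bullets by the copower/cyclic bar argument is precisely the ``collapse of $S^1$ to a point'' that the paper invokes as well known, so there is no substantive difference in approach.
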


In the tensor products above, $R$ stands for the filtered object $\ins^0R$.

\begin{proof}[Proof of Proposition~\ref{prop:reductioncollapse}]
    The completed statements for infinitesimal and derived de Rham cohomology follow from the uncompleted
    statements. We prove just the statement for derived de Rham cohomology and leave the other cases to the
    reader.
    Note that $$\F^\star_\H\dR_{S/k}\otimes_{\F^\star_\H\dR_{R/k}}R$$ is a
    crystalline filtered derived commutative ring with
    $$\gr^0\left(\F^\star_\H\dR_{S/k}\otimes_{\F^\star_\H\dR_{R/k}}R\right)\we S.$$
    Thus, there is a canonical map
    $$\F^\star_\H\dR_{S/R}\rightarrow\F^\star_\H\dR_{S/k}\otimes_{\F^\star_\H\dR_{R/k}}R.$$
    To show that
    this map is an equivalence, it is enough to check that the right-hand
    term has the correct universal property. Let $\F^\star T$ be a
    crystalline filtered derived commutative $R$-algebra. Then, there are equivalences
    $$\Map_R(\F^\star_\H\dR_{S/k}\otimes_{\F^\star_\H\dR_{R/k}}R,\F^\star
    T)\we\Map_{\F^\star_\H\dR_{R/k}}(\F^\star_\H\dR_{S/k},\F^\star
    T)\we\Map_R(S,\gr^0T)\we\Map_R(\F^\star_\H\dR_{S/R},\F^\star T),$$
    where the second equivalence is thanks to the adjunction
    of Proposition~\ref{prop:gmraksit} and the third is the universal property of derived
    de Rham cohomology.
\end{proof}

Proposition~\ref{prop:reductioncollapse} is a special case of the following more general result.

\begin{theorem}[Symmetric monoidality]
    Given a commutative diagram
    $$\xymatrix{
        k_1\ar[d]&k_0\ar[d]\ar[r]\ar[l]&k_2\ar[d]\\
        R_1&R_0\ar[r]\ar[l]&R_2,
    }$$
    in $\DAlg_\bZ$, the natural maps
    \begin{itemize}
        \item $\HH(R_1/k_1)\otimes_{\HH(R_0/k_0)}\HH(R_2/k_2)\rightarrow\HH(R_1\otimes_{R_0}R_2/k_1\otimes_{k_0}k_1)$,
        \item $\HH_\fil(R_1/k_1)\otimes_{\HH_\fil(R_0/k_0)}\HH_\fil(R_2/k_2)\rightarrow\HH_\fil(R_1\otimes_{R_0}R_2/k_1\otimes_{k_0}k_1)$,
        \item
            $\F^\star_\H\Inf_{R_1/k_1}\otimes_{\F^\star_\H\Inf_{R_0/k_0}}\F^\star_\H\Inf_{R_2/k_2}\rightarrow\F^\star_\H\Inf_{R_1\otimes_{R_0}R_2/k_1\otimes_{k_0}k_2}$,
        \item
            $\F^\star_\H\Infhat_{R_1/k_1}\widehat{\otimes}_{\F^\star_\H\Infhat_{R_0/k_0}}\F^\star_\H\Infhat_{R_2/k_2}\rightarrow\F^\star_\H\Infhat_{R_1\otimes_{R_0}R_2/k_1\otimes_{k_0}k_2}$,
        \item
            $\F^\star_\H\dR_{R_1/k_1}\otimes_{\F^\star_\H\dR_{R_0/k_0}}\F^\star_\H\dR_{R_2/k_2}\rightarrow\F^\star_\H\dR_{R_1\otimes_{R_0}R_2/k_1\otimes_{k_0}k_2}$,
            and
        \item
            $\F^\star_\H\dRhat_{R_1/k_1}\widehat{\otimes}_{\F^\star_\H\dRhat_{R_0/k_0}}\F^\star_\H\dRhat_{R_2/k_2}\rightarrow\F^\star_\H\dRhat_{R_1\otimes_{R_0}R_2/k_1\otimes_{k_0}k_2}$
    \end{itemize}
    are equivalences.
\end{theorem}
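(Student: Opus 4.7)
The plan is to prove all six equivalences by a common universal-property argument that generalizes the proof of Proposition~\ref{prop:reductioncollapse}: in each case, both sides will be shown to corepresent the same functor on the appropriate $\infty$-category of crystalline, infinitesimal, or $\bT^\vee$-equivariant filtered derived commutative algebras.

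Focusing on the uncompleted de Rham case, I would apply Proposition~\ref{prop:gmraksit} relative to the base $\bZ$: the object $\F^\star_\H\dR_{A/\ell}$ is the initial crystalline filtered derived commutative ring $\F^\star T$ equipped with a map $\ell\to\F^0T$ and a compatible $\ell$-algebra map $A\to\gr^0T$. Taking $\ell=k_1\otimes_{k_0}k_2$ and $A=R_1\otimes_{R_0}R_2$ and using the universal properties of the pushouts defining these tensor products in $\DAlg_\bZ$, the anima of maps from $\F^\star_\H\dR_{R_1\otimes_{R_0}R_2/k_1\otimes_{k_0}k_2}$ into a test object $\F^\star T$ becomes the anima of maps $k_i\to\F^0T$ and $R_i\to\gr^0T$ for $i=1,2$ whose restrictions along $k_0$ and $R_0$ agree. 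Reassembling by applying the same universal property at each of the three corners $(R_i/k_i)_{i=0,1,2}$, this data is precisely a cocone under the span $\F^\star_\H\dR_{R_1/k_1}\leftarrow\F^\star_\H\dR_{R_0/k_0}\rightarrow\F^\star_\H\dR_{R_2/k_2}$ in $\FDAlg^\crys_\bZ$, i.e., a map out of the relative tensor product. The comparison map in the statement is the one induced by these identifications, so it is an equivalence.

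The Hodge-complete de Rham case, both infinitesimal cases, and both Hochschild cases reduce to the same argument with mild bookkeeping changes. For the Hodge-complete cases one uses the completed adjunction in Proposition~\ref{prop:gmraksit} and its infinitesimal analogue, together with the fact that pushouts of Hodge-complete filtered derived commutative algebras are computed by completed tensor products. For the Hochschild assertions one uses the variants of Proposition~\ref{prop:gmraksit} recorded just before Proposition~\ref{prop:reductioncollapse}, working in $\cMod_{\bT^\vee}(\DAlg_\bZ)_{\HH(R_0/k_0)/}$ or its filtered refinement; the circle action on the comparison map propagates automatically because the universal property is $\bT^\vee$-equivariant at every stage.

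The main thing to verify is that these universal properties genuinely allow the base ring to vary, i.e., that a map $\F^\star_\H\dR_{R/k}\to\F^\star T$ of filtered derived commutative $\bZ$-algebras unpacks as the combined data of a $k$-algebra structure on $\F^0T$ together with a $k$-algebra map $R\to\gr^0T$. This is immediate from the $\bZ$-based form of Proposition~\ref{prop:gmraksit} together with the observation that $\F^\star_\H\dR_{k/\bZ}$-algebra structures on $\F^\star T$ are the same data as maps $k\to\F^0T$, again by the universal property of derived de Rham cohomology. Once this is in hand, the remainder of the proof is formal.
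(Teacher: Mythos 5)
Your overall strategy is sound, and it is in fact the first route the paper itself indicates: the paper's proof opens by saying one can argue as in Proposition~\ref{prop:reductioncollapse}, matching the universal properties of the two sides in the relevant categories, and your proposal fleshes out exactly that sketch. The argument the paper actually writes out is different: granting Proposition~\ref{prop:reductioncollapse}, it rewrites each relative term via the collapse formula as an absolute term over $\bZ$ base-changed along $\F^\star_\H\dR_{k_i/\bZ}\rightarrow\ins^0k_i$, commutes the resulting colimits, and uses that $\F^\star_\H\dR_{-/\bZ}$ preserves colimits, concluding by one more application of the collapse formula. That route is a purely formal colimit manipulation once Proposition~\ref{prop:reductioncollapse} is known; yours instead requires a (routine but fiddly) rearrangement of finite limits of mapping anima when you decompose $k_1\otimes_{k_0}k_2$ and $R_1\otimes_{R_0}R_2$ as pushouts, and in exchange treats all six cases uniformly by corepresentability.

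There is, however, one incorrect supporting claim in your final paragraph, at precisely the step you single out as the crux. An $\F^\star_\H\dR_{k/\bZ}$-algebra structure on $\F^\star T$ is \emph{not} the same data as a map $k\rightarrow\F^0T$: by the $\bZ$-based adjunction one has $\Map_{\FDAlg^\crys_\bZ}(\F^\star_\H\dR_{k/\bZ},\F^\star T)\we\Map_{\DAlg_\bZ}(k,\gr^0T)$, and $\Map(k,\F^0T)$ and $\Map(k,\gr^0T)$ differ in general. The structure relevant to Proposition~\ref{prop:gmraksit} is an $\ins^0k$-algebra structure, i.e.\ a map $k\rightarrow\F^0T$, which maps to but is not equivalent to an $\F^\star_\H\dR_{k/\bZ}$-algebra structure. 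The unpacking you want is nonetheless true and is easy to repair with tools you already invoke: either (i) use the collapse formula $\F^\star_\H\dR_{R/k}\we\F^\star_\H\dR_{R/\bZ}\otimes_{\F^\star_\H\dR_{k/\bZ}}\ins^0k$ of Proposition~\ref{prop:reductioncollapse}, so that
$$\Map_{\FDAlg^\crys_\bZ}(\F^\star_\H\dR_{R/k},\F^\star T)\we\Map_{\DAlg_\bZ}(R,\gr^0T)\times_{\Map_{\DAlg_\bZ}(k,\gr^0T)}\Map_{\DAlg_\bZ}(k,\F^0T),$$
which is exactly the pair of a map $k\rightarrow\F^0T$ and a $k$-algebra map $R\rightarrow\gr^0T$; or (ii) fiber the mapping anima over $\Map_{\FDAlg^\crys_\bZ}(\ins^0k,\F^\star T)\we\Map_{\DAlg_\bZ}(k,\F^0T)$ and identify the fiber over a fixed $k$-structure with $\Map_{\DAlg_k}(R,\gr^0T)$ using the adjunction of Proposition~\ref{prop:gmraksit} over the base $k$. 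With that repair, and the analogous unpackings for $\Inf$, for $\HH$ with its $\bT^\vee$- (resp.\ $\bT_\fil^\vee$-) coaction, and for the Hodge-completed variants (where pushouts are indeed computed by completed tensor products, or where one simply completes the uncompleted equivalence as the paper does), your argument goes through.
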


\begin{proof}
    One can argue as in the proof of Proposition~\ref{prop:reductioncollapse} to check that the
    universal property mapping out of the right-hand sides agrees with that of the left-hand sides
    in the relevant categories. Alternatively, given Proposition~\ref{prop:reductioncollapse}, we
    can argue as follows. We do the case of derived de Rham cohomology. Then, we can recognize the
    left-hand side as
    $$(\F^\star_\H\dR_{R_1/\bZ}\otimes_{\F^\star_\H\dR_{k_1/\bZ}}k_1)\otimes_{(\F^\star_\H\dR_{R_0/\bZ}\otimes_{\F^\star_\H\dR_{k_0/\bZ}}k_0)}(\F^\star_\H\dR_{R_2/\bZ}\otimes_{\F^\star_\H\dR_{k_2/\bZ}}k_2).$$
    By commuting colimits, this agrees with
    $$(\F^\star_\H\dR_{R_1/\bZ}\otimes_{\F^\star_\H\dR_{R_0/\bZ}}\F^\star_\H\dR_{R_2/\bZ})\otimes_{\F^\star_\H\dR_{k_1/\bZ}\otimes_{\F^\star_\H\dR_{k_0/\bZ}}\F^\star_\H\dR_{k_2/\bZ}}(k_1\otimes_{k_0}k_2),$$
    which agrees with the right-hand side using that $\F^\star_\H\dR_{-/\bZ}$ commutes with colimits
    and by another application of
    Proposition~\ref{prop:reductioncollapse}.
\end{proof}

\begin{example}
    Let $k\rightarrow R\rightarrow S$ be connective derived commutative rings. Because
    the objects involved are all connective in the Beilinson $t$-structure
    (see~\cite[Prop.~3.3.11]{raksit} or~\cite[Def.~6.3]{antieau_crystallization}), applying
    $\pi_0$ in the Beilinson $t$-structure to
    $\F^\star_\H\dR_{S/k}\otimes_{\F^\star_\H\dR_{R/k}}R\simeq\F^\star_\H\dR_{S/R}$
    gives an isomorphism
    \begin{equation}\label{eq:tensordiscrete}
        \Omega^\bullet_{\pi_0S/\pi_0k}\otimes_{\Omega^\bullet_{\pi_0R/\pi_0k}}\pi_0R\iso\Omega^\bullet_{\pi_0S/\pi_0R}
    \end{equation}
    of strict cdgas. Of course, this could have been proven directly from the
    universal property for the classical de Rham complex using the same proof as
    Proposition~\ref{prop:reductioncollapse}.
\end{example}

\begin{example}
    If $k\rightarrow R\rightarrow S$ are maps of commutative rings
    such that $\L_{R/k}$, $\L_{S/k}$, and $\L_{S/R}$ all have Tor-amplitude in
    $[0,0]$ over $R$, $S$, and $S$, respectively (for example if the morphisms
    are smooth),
    then the maps
    $\F^\star_\H\dRhat_{R/k}\rightarrow R$ and
    $\F^\star_\H\dRhat_{R/k}\rightarrow\F^\star_\H\dRhat_{S/k}$ are
    Tor-independent in the sense that the tensor product of classical de
    Rham complexes as in~\eqref{eq:tensordiscrete} is in fact derived (in
    complete filtered complexes).
\end{example}

Given $k\rightarrow R\rightarrow S$,
one might wonder whether the canonical map
$$\dRhat_{S/k}\otimes_{\dRhat_{R/k}}R\rightarrow\dRhat_{S/R}$$ is an
equivalence before completion of the tensor product. This problem turns out to be at the heart of
the way in which the Hodge filtration localizes de Rham theory.
In characteristic $0$, the map is not typically an equivalence before
completion. Here are two examples.

\begin{example}\label{ex:cm}
    Let $X$ be the elliptic curve over $\Gm$ defined
    by the Weierstrass equation $$y^2=x^3+\lambda.$$ The discriminant of the family is
    $-108\lambda^2$ so this Weierstrass equation is minimal and the singular fiber in a minimal
    family is a cusp (type II in Kodaira's classification). Let $Y$ be the complement
    of the $0$-section, so that $$Y=\Spec\bC[\lambda^{\pm
    1},x,y]/(y^2-x^3-\lambda x).$$ The smooth morphism $Y\rightarrow\Gm$ has fibers
    which are all elliptic curves minus a point. Thus, the fibers $F$ of $Y(\bC)\rightarrow\bC^\times$ 
    each have the homotopy type of $S^1\vee S^1$. The Leray--Serre spectral
    sequence
    $$\E_2^{s,t}=\H^s(\bC^\times,\H^t(F))\Rightarrow\H^{s+t}(Y(\bC),\bC)$$
    vanishes except for $(s,t)\in\{(0,0),(1,0)\}$. Indeed, the monodromy action
    of $\bZ=\pi_1\bC^\times$ on the local system $\H^1(F,\bC)$ is via the
    matrix $$\begin{pmatrix}1&1\\-1&0\end{pmatrix}$$ by~\cite[Table 6,
        p.159]{barth-peters-vandeven}. Thus,
    $$\H^0(\bC^\times,\H^1(F))=0=\H^1(\bC^\times,\H^1(F)).$$
    It follows from Bhatt's theorem~\cite[Thm.~4.10]{bhatt-completions} (or
    Theorem~\ref{thm:hartshorne} below) comparing Hodge-complete derived de Rham cohomology to
    Hartshorne's algebraic de Rham cohomology and Hartshorne's
    theorem~\cite[Thm.~IV.1.1]{hartshorne-derham} comparing algebraic de Rham
    cohomology to singular cohomology---although we only need the smooth case known to
    Grothendieck~\cite{grothendieck-derham}---that the pullback map $\dRhat_{\Gm/\bC}\rightarrow\dRhat_{Y/\bC}$ is an
    equivalence. Therefore, we see that
    $$\dRhat_{Y/\bC}\otimes_{\dRhat_{\Gm/\bC}}\bC[\lambda^{\pm
    1}]\simeq\bC[\lambda^{\pm 1}]\rightarrow\dRhat_{Y/\Gm}$$
    is not an equivalence since $\H^1(\dRhat_{Y/\Gm})$ is a rank $2$ vector
    bundle on $\Gm$.
\end{example}

\begin{example}
    Here is an algebraically easier example without the smoothness hypothesis.
    Suppose that $k$ is a commutative $\bQ$-algebra and consider the
    composition $k\rightarrow k[x]\rightarrow k$ where the second map sends $x$
    to $0$. Then,
    $\dRhat_{k[x]/k}\we k$ by the Poincar\'e lemma, $\dRhat_{k/k}\we k$, and
    $\dRhat_{k/k[x]}\we k\llbracket x\rrbracket$. However,
    $k\otimes_{\dRhat_{k[x]/k}}k[x]\we k[x]\rightarrow\dRhat_{k/k[x]}\we
    k\llbracket x\rrbracket$ is not an equivalence.
\end{example}

\section{The horizontal connection}\label{sec:horizontal}

If $\F^\star R$ is a filtered $\bE_\infty$-ring, then there is an adjunction
$$\F^\star_\h(-)\colon\Mod_{\F^0R}\rightleftarrows\FMod_{\F^\star R}\colon\F^0$$ where
the left adjoint is obtained by the \defidx{horizontal connection} functor $$M\mapsto
\F^\star_\h M=\F^\star R\otimes_{\ins^0\F^0(R)}\ins^0(M).$$ Here,
$\ins^0(\F^0R)\rightarrow\F^\star R$ is the counit of the adjunction
$$\ins^0\colon\Mod_\bZ\rightleftarrows\FMod_\bZ\colon\F^0$$ applied to
$\F^\star R$. Write $\overline{M}$ for $\gr^0_\h M\we\gr^0R\otimes_{\F^0R}M$.

Every horizontal connection $\F^\star_\h(M)$ satisfies the flatness condition
studied in our followup paper {\em F\&C IV}: $$\gr^iR\otimes_{\gr^0R}\gr^0_\h
M\we\gr^i_\h M.$$ However, and this is an absolutely crucial point, the flat
filtration is typically not complete. When it is, 
$M$ is the complex of
solutions to the ``differential equation''
$$\cdots\rightarrow\gr^{-1}R[-1]\otimes_{\gr^0R}\overline{M}\xrightarrow{\nabla}\overline{M}\xrightarrow{\nabla}\gr^1R[1]\otimes_{\gr^0R}\overline{M}\rightarrow\cdots.$$

\begin{example}[The de Rham case]
    If $k\rightarrow R\rightarrow S$ are smooth maps of commutative
    $\bF_p$-algebras, then applying the construction above to the
    $\dR_{R/k}$-module $\dR_{S/k}$ produces the classical Gauss--Manin
    connection on $\dR_{S/k}$. It is one important case
    where $\F^\star_\h\dR_{S/k}$ is already complete. The special thing about characteristic
    $p$ is the conjugate filtration whose existence shows that $\dR_{S/k}$ and $\dR_{S/R}$ are already Hodge-complete in the
    smooth case. This is the
    fundamental source of nilpotence in characteristic $p$.
\end{example}

\begin{warning}
    In characteristic $0$, even with smoothness hypotheses,
    the construction above does not produce the correct Gauss--Manin
    connection. Indeed, $\F^\star_\h\dR_{S/k}$ computed with respect to
    $\F^\star_\H\dR_{R/k}$ produces a typically incomplete filtration on
    $\dR_{S/k}\we k$, which is not so interesting, while
    $\F^\star_\h\dRhat_{S/k}$ computed with respect to
    $\F^\star_\H\dRhat_{R/k}$ is a filtration on $\dRhat_{S/k}$, as desired,
    but with $\gr^0_\h\dRhat_{S/k}$ generally impossible to understand and {\em not}
    typically equivalent to $\dRhat_{S/R}$; see Example~\ref{ex:cm}. To obtain the correct statement in
    characteristic $0$, one needs to work with two filtrations, which is the 
    source of non-unipotent monodromy. We do this in Section~\ref{sec:filtered_horizontal}.
\end{warning}

\begin{example}[The Higgs case]
    Let $R$ be a smooth commutative $k$-algebra.
    If we equip $\Omega^*_{R/k}$ with the trivial differential, then to a
    bundle over the shifted tangent bundle $\T_R[-1]$, i.e., a
    $\LSym_R(\Omega^1_{R/k}[1])\we|\Omega^*_{R/k}|$-module spectrum, we obtain a canonical Higgs bundle with
    vanishing Higgs field.
\end{example}

\begin{example}[The Bockstein case]
    If $R$ is a ring with $d\in R$, we can give $R$ the
    derived $d$-adic filtration $d^\star R$. If $M$ is an $R$-module, then the flat
    filtration on $M$ with respect to $d^\star R$ is the derived $d$-adic filtration. The associated
    spectral sequence is the $d$-based Bockstein spectral sequence. The flat
    filtration will be complete if and only if $M$ is derived $d$-complete to
    begin with.
\end{example}

\begin{example}[The Hochschild case]\label{ex:flathh}
    Let $k\rightarrow R\rightarrow S$ be maps of connective derived commutative rings. As
    $\HH(S/k)$ is a module spectrum over
    $\HH(R/k)$, it makes sense to speak of the flat filtration
    $\F^\star_\h\HH(S/k)$ with respect to $\HH_\fil(R/k)$. In this case,
    $\gr^0_\h\HH(S/k)\we R\otimes_{\HH(R/k)}\HH(S/k)\we\HH(S/R)$ by the
    collapse formula. Since $k$, $R$, and $S$ are
    connective, the induced
    filtration is complete because
    \begin{align*}
        \F^s(\HH_\fil(R/k)\otimes_{\ins^0(\HH(R/k))}\ins^0\HH(S/k))&\we\colim_{i+j\geq
        s}\F^i\HH_\fil(R/k)\otimes_{\HH(R/k)}\F^j\ins^0\HH(S/k)\\
        &\we\F^s\HH_\fil(R/k)\otimes_{\HH(R/k)}\HH(S/k),
    \end{align*} 
    which is $s$-connective by~\cite[Cor.~8.12]{antieau_crystallization}. It follows that the flat filtration is a complete
    exhaustive decreasing filtration on $\HH(S/k)$. The
    associated coherent cochain complex takes the form
    $$0\rightarrow\HH(S/R)\xrightarrow{\nabla}\L_{R/k}[2]\otimes_R\HH(S/R)\xrightarrow{\nabla}\Lambda^2\L_{R/k}[4]\otimes_R\HH(S/R)\rightarrow\cdots.$$
    Applying $(-)^{\t S^1}$ recovers (in the case where $k\rightarrow R$ is
    smooth) a connection which we expect agrees with Getzler's Gauss--Manin connection on periodic
    cyclic homology. This
    works because of the shear in Hochschild homology, which guarantees better
    completeness properties because of the increasing connectivity of the
    filtration; see Theorem~\ref{thm:gmhp} below.
\end{example}

\begin{remark}
    The connection of the previous example generalizes to the noncommutative setting if we replace
    $S$ by an
    $R$-linear dg category $\Cscr$ as long as either $R$ is smooth over $k$ or $\HH(\Cscr/k)$
    is bounded below. The latter happens for example if $\Cscr$ is smooth and proper
    over $R$ or $\Cscr\we\Perf(X)$ where $X$ is a quasi-compact quasi-separated
    $R$-scheme of finite Krull dimension.
\end{remark}

As explained to me by Getzler, the connection on $\HP(\Cscr/R)$ was known back
to work of Connes~\cite{connes} and Goodwillie~\cite{goodwillie_cyclic}. Getzler's contribution in~\cite{getzler} was
to prove the flatness of the connection on $\HP(\Cscr/R)$. See also Tsygan's paper~\cite{tsygan-gm}.
In the approach of our paper, the flatness of
the connection comes for free from the formalism of coherent cochain complexes.
This also answers the question of the possibility of making coherent choices
for the homotopies expressing flatness ``all the way up'', which was a question
of Deligne to Getzler and answered by Tsygan.

It seems to me to be unobserved previously that the connection on periodic
cyclic homology arises from a connection on Hochschild homology in the sense of
this paper.

\begin{theorem}[Gauss--Manin connection on periodic-cyclic homology]\label{thm:gmhp}
    Let $k\rightarrow R$ be a map of connective derived commutative rings and suppose
    that $\Cscr$ is an $R$-linear stable $\infty$-category (or dg category).
    There is a filtration $\F^\star_\h\HP(\Cscr/k)$ with
    $$\gr^s_\h\HP(\Cscr/k)\we\L_{R/k}[s]\otimes_R\HP(\Cscr/R).$$
    The filtration is complete if either $R$ is smooth over $k$ or
    $\HH(\Cscr/k)$ is bounded below.
\end{theorem}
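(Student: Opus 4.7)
My plan is to build $\F^\star_\h\HP(\Cscr/k)$ by applying the filtered circle Tate fixed points to the flat filtration on $\HH(\Cscr/k)$ over $\HH_\fil(R/k)$, generalizing Example~\ref{ex:flathh} from commutative $S$ to an $R$-linear dg category $\Cscr$.

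First, I would adapt Example~\ref{ex:flathh}: the $R$-linear structure on $\Cscr$ makes $\HH(\Cscr/k)$ a $\bT$-equivariant $\HH(R/k)$-module, which I promote to an $\HH_\fil(R/k)$-module structure on $\ins^0\HH(\Cscr/k)$ in the $\bT_\fil$-equivariant filtered setting. Applying the horizontal connection construction then yields a $\bT_\fil$-equivariant filtration $\F^\star_\h\HH(\Cscr/k)$ whose graded pieces take the form $\gr^s_\h\HH(\Cscr/k)\we\Lambda^s\L_{R/k}[s]\otimes_R\HH(\Cscr/R)$. Here the $\gr^0$ computation invokes the noncommutative collapse formula $\HH(\Cscr/R)\we R\otimes_{\HH(R/k)}\HH(\Cscr/k)$, which generalizes Proposition~\ref{prop:reductioncollapse} and can be checked from the universal property defining relative Hochschild homology of a dg category.

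Second, I would define $\F^\star_\h\HP(\Cscr/k):=(\F^\star_\h\HH(\Cscr/k))^{\t\bT_\fil}$, whose underlying object is $\HH(\Cscr/k)^{\t\bT}\we\HP(\Cscr/k)$. Identifying the graded pieces requires understanding how the $\bT_\fil$-action interacts with the flat filtration and how the filtered Tate construction reorganizes the resulting graded data; after this analysis the graded pieces simplify to the claimed $\L_{R/k}[s]\otimes_R\HP(\Cscr/R)$, with the reduction from $\Lambda^s\L_{R/k}[s]$ to a single $\L_{R/k}[s]$ reflecting the shear of the $\bT_\fil$-action on the HKR graded pieces (mirroring how the $\Lambda^s\L_{R/k}[s]$ pieces assemble into the de Rham complex on the $\HC^-$ side).

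Finally, for completeness I would use that $\F^s\HH_\fil(R/k)$ is $s$-connective by~\cite[Cor.~8.12]{antieau_crystallization}, so that $\F^s_\h\HH(\Cscr/k)$ has connectivity growing linearly in $s$ whenever tensoring with $\HH(\Cscr/k)$ preserves this bound; in the smooth case this follows from perfection of $\L_{R/k}$, and in the bounded-below case it follows directly. Passing to $\bT_\fil$-Tate fixed points preserves this shearing connectivity, because the filtered Tate of a filtered object whose $s$-th step is $(s-c)$-connective remains highly complete after the degree shift, giving completeness of $\F^\star_\h\HP(\Cscr/k)$. I expect the main technical obstacle to be the graded piece identification in the second step, which requires carefully tracking the filtered circle action on the HKR graded pieces and seeing how the Tate construction produces $\L_{R/k}[s]$ rather than its $s$-th exterior power.
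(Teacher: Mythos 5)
Your first step coincides with the paper's: the filtration on $\HH(\Cscr/k)$ is the flat/horizontal filtration of Example~\ref{ex:flathh} relative to $\HH_\fil(R/k)$, with $\gr^0$ identified by the collapse formula $\HH(\Cscr/R)\we R\otimes_{\HH(R/k)}\HH(\Cscr/k)$. The problems are in your second and third steps. The paper does not take Tate with respect to the filtered circle: it applies the ordinary $(-)^{\t\bT}$ levelwise to $\F^\star_\h\HH(\Cscr/k)$, so the graded pieces of the resulting filtration on $\HP(\Cscr/k)$ are simply $\bigl(\Lambda^s\L_{R/k}[s]\otimes_R\HH(\Cscr/R)\bigr)^{\t\bT}$, which (the weight-$s$ factor being inert) give $\Lambda^s\L_{R/k}\otimes_R\HP(\Cscr/R)$ up to shift --- exactly the terms appearing in the coherent cochain complex of the corollary that follows the theorem. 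Your proposed mechanism by which the filtered Tate construction ``reduces'' $\Lambda^s\L_{R/k}[s]$ to $\L_{R/k}[s]$ does not exist: no shearing of the $\bT_\fil$-action converts an $s$-th exterior power into a first exterior power, and an argument built on that step would fail. Moreover $\HP(\Cscr/k)$ is defined from the plain $\bT$-action, so $(-)^{\t\bT_\fil}$ would not even have the right underlying object without further argument.

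The completeness argument is also not valid as stated. The principle you invoke --- that the (filtered) Tate construction of a filtration whose $s$-th stage is $(s-c)$-connective ``remains highly complete after the degree shift'' --- is false as a general statement, because homotopy fixed points are a limit and destroy connectivity estimates. The paper's argument instead splits the Tate construction: completeness of $\F^\star_\h\HH(\Cscr/k)$ (finite when $R$ is smooth over $k$, since $\Lambda^s\L_{R/k}=\Omega^s_{R/k}$ vanishes for $s$ large; complete by the connectivity estimate of Example~\ref{ex:flathh} when $\HH(\Cscr/k)$ is bounded below) passes to $\HH(\Cscr/k)^{\h\bT}$ because limits of complete filtered objects are complete, passes to $\HH(\Cscr/k)_{\h\bT}$ because taking $\bT$-orbits does not decrease connectivity (so the connectivity- or finiteness-based completeness persists through the colimit), and then the filtration on $\HP(\Cscr/k)$ is complete as the cofiber of the norm map. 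Note also that in the smooth case your appeal to ``perfection of $\L_{R/k}$'' cannot yield a connectivity bound, since $\HH(\Cscr/k)$ may be unbounded below; finiteness of the filtration is the correct reason there.
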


\begin{proof}
    The filtration on $\HH(\Cscr/k)$ is constructed as in Example~\ref{ex:flathh}.
    It is complete if $R$ is smooth since in that case it is actually finite.
    It is complete if $\HH(\Cscr/k)$ is bounded above by the argument above.
    It follows that the induced filtration on $\HH(\Cscr/k)^{\h\bT}$ is
    complete in these cases. On the other hand, taking $\bT$-orbits does not
    decrease connectivity, so the induced filtration on $\HH(\Cscr/k)_{\h\bT}$ is also
    complete. Therefore, the induced filtration on $\HP(\Cscr/k)^{\t\bT}$ is
    complete.
\end{proof}

The following corollary applies in particular to smooth and proper $R$-linear categories $\Cscr$.

\begin{corollary}
    Suppose that $k$ is a field of characteristic $0$ and that $R$ is a smooth commutative
    $k$-algebra.
    If $\HH(\Cscr/R)$ is bounded with homology groups which are
    finitely presented over $R$, then each homology group of $\HP(\Cscr/R)$ is
    a finitely presented projective $R$-module.
\end{corollary}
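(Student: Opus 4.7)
The plan is to use Theorem~\ref{thm:gmhp} to equip each homology group of $\HP(\Cscr/R)$ with a classical integrable connection and then to invoke the standard $D$-module-theoretic fact that a finitely presented module with an integrable connection on a smooth algebra over a characteristic zero field is projective.

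First I would check that each $\pi_n\HP(\Cscr/R)$ is finitely presented over $R$. Since $R$ is smooth over the field $k$ it is regular and Noetherian, so the hypothesis that $\HH(\Cscr/R)$ is bounded with finitely presented homology groups implies that $\HH(\Cscr/R)$ is a perfect $R$-complex. The Tate spectral sequence computing $\HP(\Cscr/R)=\HH(\Cscr/R)^{\t\bT}$ has $\E_2$-page which is $2$-periodic in the horizontal direction and supported on finitely many rows, each a finitely presented $R$-module. For any fixed $n$ only finitely many entries contribute to $\pi_n\HP(\Cscr/R)$, and Noetherianity then forces $\pi_n\HP(\Cscr/R)$ to be finitely presented.

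Second, smoothness of $R/k$ ensures that the filtration $\F^\star_\h\HP(\Cscr/k)$ of Theorem~\ref{thm:gmhp} is complete. Viewing this complete filtration as a coherent cochain complex and taking $\pi_n$ produces a differential
\[
    \nabla\colon \pi_n\HP(\Cscr/R)\longrightarrow \Omega^1_{R/k}\otimes_R\pi_n\HP(\Cscr/R),
\]
where I identify $\L_{R/k}$ with $\Omega^1_{R/k}$ using smoothness and absorb the shifts produced by the coherent cochain complex formalism using the $\bT$-Tate periodicity of $\HP$. The higher coherence data built into the filtered formalism encode the Leibniz rule and $\nabla^2=0$, so each $\pi_n\HP(\Cscr/R)$ becomes a coherent module over the ring $D_{R/k}$ of relative differential operators.

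Finally, I would invoke the classical theorem, due essentially to Katz, that a finitely presented module with an integrable connection over a smooth $k$-algebra in characteristic zero is projective: one completes at a closed point, uses the formal Frobenius theorem to trivialize the connection and exhibit the completion as a free module, and then appeals to faithful flatness of completion over a Noetherian local ring. The main obstacle I anticipate is step two: carefully disentangling the shifts produced by the coherent cochain complex formalism together with the $\bT$-periodicity of $\HP$ to recover an honest classical connection with values in $\Omega^1_{R/k}\otimes_R\pi_n\HP(\Cscr/R)$. The other two steps should then follow from standard arguments.
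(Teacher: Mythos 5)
Your proposal is correct and follows essentially the same route as the paper: use Theorem~\ref{thm:gmhp} (with completeness coming from smoothness of $R/k$) to put a flat connection on each $\HP_n(\Cscr/R)$, check finite presentation from the boundedness and finiteness of $\HH(\Cscr/R)$, and conclude projectivity by the classical characteristic-zero fact, which the paper simply cites as~\cite[Prop.~8.8]{katz-nilpotent} where you sketch its proof.
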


\begin{proof}
    By Theorem~\ref{thm:gmhp}, $\HP(\Cscr/R)$ is equipped with a
    Gauss--Manin connection. As a coherent cochain complex, it takes the form
    $$\HP(\Cscr/R)\xrightarrow{\nabla}\HP(\Cscr/R)\otimes_R\L_{R/k}\xrightarrow{\nabla}\HP(\Cscr/R)\otimes_R\Lambda^2\L_{R/k}\xrightarrow{\nabla}\cdots.$$
    Since $R$ is smooth over $k$, each $\Lambda^i\L_{R/k}$ is concentrated in degree $0$.
    Hence, applying $\pi_n$ to the coherent cochain complex we find that
    $\HP_n(\Cscr/R)$ thus comes equipped with a flat connection. The finiteness
    assumption on $\HH_*(R/k)$ implies that $\HP_n(\Cscr/R)$ is finitely
    presented over $R$. Thus, each $\HP_n(\Cscr/R)$ is finitely presented
    projective, by~\cite[Prop.~8.8]{katz-nilpotent}.
\end{proof}

\begin{remark}
    In fact, more is true. Mathew shows in~\cite[Cor.~4.8]{mathew-kaledin} that if $\Cscr$ is
    smooth and proper over a commutative $\bQ$-algebra $R$, then $\HH(\Cscr/R)$ is a sum of
    shifts of finitely presented projective $R$-modules and the $\bT$-action on
    $\HH(\Cscr/R)$ is trivial, a very strong formality result. It implies in particular unipotence
    of the monodromy in the connection above. See work of
    Kaledin~\cite{kaledin-nhdr,kaledin-spectral},
    Kaledin--Konovalov--Magidson~\cite{kaledin-konovalov-magidson}, and Devalapurkar~\cite{devalapurkar-bp} for related
    results.
\end{remark}

\section{The filtered horizontal connection}\label{sec:filtered_horizontal}

We want to repeat the construction of the horizontal connection in a way that brings
a secondary filtration, typically the Hodge filtration, along for the ride and produces a connection which
satisfies Griffiths transversality. This allows us to localize appropriately to
capture more interesting differential equations.

\begin{definition}[Bifiltrations]
    Let $\BFMod_k$ denote the $\infty$-category
    $\Fun(\bZ^{\op}\times\bZ^{\op},\Mod_k)$ of double filtrations in $\Mod_k$.
    Day convolution endows $\BFMod_k$ with a symmetric monoidal structure arising from the symmetric
    monoidal structure on $\bZ^\op\times\bZ^\op$ given by addition in each coordinate.
    A general object of $\BFMod_k$ is denoted by $\F^{\star,\star}M$. We can construct horizontal and
    vertical associated graded filtered objects. The vertical associated graded filtrations
    $\gr^i\F^\star M=\cofib(\F^{i+1,\star}M\rightarrow\F^{i,\star}M)$ are the differences between the adjacent
    columns of the bifiltration, while the horizontal associated graded filtrations
    $\F^\star\gr^jM=\cofib(\F^{\star,j+1}M\rightarrow\F^{\star,j}M)$ are the differences between
    adjacent rows. We also have $\gr^{i,j}M$, which is the total cofiber of the square
    $$\xymatrix{
        \F^{i+1,j+1}M\ar[r]\ar[d]&\F^{i,j+1}M\ar[d]\\
        \F^{i+1,j}M\ar[r]&\F^{i,j}M.
    }$$ This total cofiber may be identified with $\gr^j(\gr^i\F^\star M)$ and $\gr^i(\F^\star\gr^j
    M)$.
\end{definition}

\begin{construction}
    There is a fully faithful diagonal functor $\bZ\rightarrow\bZ\times\bZ$ from which we obtain 
    an adjunction
    $$\Delta_!\colon\FMod_k\rightleftarrows\BFMod_k\colon\Delta^*$$
    where moreover $\Delta_!$ is symmetric monoidal. We
    have $$\F^{i,j}\Delta_!(\F^\star
    M)\we\colim_{\text{$(n,n)\rightarrow(i,j)$ in
    $\bZ^\op\times\bZ^\op$}}\F^nM\we\F^{\max(i,j)}M.$$
    From this, we see that $$\gr^{i,j}\Delta_!(\F^\star M)\we\begin{cases}
        \gr^i(\F^\star M)&\text{if $j=i$,}\\
        0&\text{otherwise.}
    \end{cases}$$
\end{construction}

\begin{remark}
    We find the three pictures starting on page~\pageref{fig:doublecomplexpicture} helpful for
    visualizing bifiltered complexes and specifically $\Delta_!(\F^\star M)$. The bifiltered picture of
    Figure~\ref{fig:bifilteredpicture} displays the filtration as a grid of commuting maps.
    The column cochain complex picture of Figure~\ref{fig:columnpicture} is obtained by looking at
    the coherent cochain complex in filtered complexes obtained by taking the vertical associated
    graded filtrations. Finally, the double
    complex picture of Figure~\ref{fig:doublecomplexpicture} takes associated graded pieces in both
    directions and assembles these into a coherent cochain complex in coherent cochain complexes (a
    coherent double complex up to signs).
\end{remark}

\begin{remark}
    The nature of coherent double complexes can be somewhat
    confusing at first. Note that we start with a boundary map
    $\gr^0M\rightarrow\gr^1M[1]$ in the coherent cochain complex
    associated to $\F^\star M$. But, when we write down the associated homotopy
    coherent double complex we see the square
    $$\xymatrix{
        \gr^0M\ar[r]\ar[d]&0\ar[d]\\
        0\ar[r]&\gr^1M[2].
    }$$
    The commutativity of the square implies that there is an induced map
    $\gr^0M\rightarrow\gr^1M[2][-1]$ which is our original differential.
\end{remark}

\begin{figure}
  \centering
        \centering
        $$\xymatrix{
            &\vdots\ar[d]&\vdots\ar[d]&\vdots\ar[d]&\\
            \cdots\ar[r]&\F^2M\ar[r]\ar[d]&\F^2M\ar[r]\ar[d]&\F^2M\ar[r]\ar[d]&\cdots\\
            \cdots\ar[r]&\F^2M\ar[r]\ar[d]&\F^1M\ar[r]\ar[d]&\F^1M\ar[r]\ar[d]&\cdots\\
            \cdots\ar[r]&\F^2M\ar[r]\ar[d]&\F^1M\ar[r]\ar[d]&\F^0M\ar[r]\ar[d]&\cdots\\
            &\vdots&\vdots&\vdots
        }$$
        \caption{The bifiltered picture for $\Delta_!(\F^\star M)$ in terms of the filtered pieces
        of $\F^\star M$. Here, we place
        $\F^{i,j}(\Delta_!(\F^\star M))$ in the $(-i,j)$-coordinate of the usual $xy$-plane.}
      \label{fig:bifilteredpicture}
  \centering
    $$\xymatrix{
        \cdots\ar[r]&\ins^0\gr^0(M)\ar[r]&\ins^1\gr^1(M)[1]\ar[r]&\ins^2\gr^2(M)[2]\ar[r]&\cdots
    }$$
    \caption{The column cochain complex picture for $\Delta_!(\F^\star
    M)$ in terms of the associated graded pieces of $\F^\star M$.}
    \label{fig:columnpicture}
  \centering
    $$\xymatrix{
        &\vdots\ar[d]&\vdots\ar[d]&\vdots\ar[d]&\\
        \cdots\ar[r]&\gr^0M\ar[r]\ar[d]&0\ar[r]\ar[d]&0\ar[r]\ar[d]&\cdots\\
        \cdots\ar[r]&0\ar[r]\ar[d]&\gr^1M[2]\ar[r]\ar[d]&0\ar[r]\ar[d]&\cdots\\
        \cdots\ar[r]&0\ar[r]\ar[d]&0\ar[r]\ar[d]&\gr^2M[4]\ar[r]\ar[d]&\cdots\\
        &\vdots&\vdots&\vdots
    }$$
    \caption{The double complex picture for $\Delta_!(\F^\star M)$ in terms of the associated
    graded pieces of $\F^\star M$.}
  \label{fig:doublecomplexpicture}
\end{figure}

\begin{construction}
    Now, we will restrict our attention to nonnegatively filtered objects, letting
    $$\F^+\Mod_k=\Fun(\bN^{\op},\Mod_k))\quad\text{and}\quad\BF^+\Mod_k=\Fun(\bN^{\op}\times\bN^{\op},\Mod_k).$$
    We still have a functor $\Delta_!\colon\F^+\Mod_k\rightarrow\BF^+\Mod_k$.
    We also have $$\chi\colon\bN\rightarrow\bN\times\bN$$ given by $$\chi(n)=(0,n).$$
    There is a natural transformation $\chi\rightarrow\Delta$ (this is why we need to
    restrict to nonnegatively filtered objects) from which we obtain a
    natural transformation $\chi_!\rightarrow\Delta_!$ of symmetric monoidal functors
    $\F^+\Mod_k\rightarrow\BF^+\Mod_k$.

    We have \begin{align*}
        \F^{i,j}\chi_!(\F^\star M)&\we\colim_{\text{$(0,n)\rightarrow(i,j)$ in $\bZ_{\geq 0}^\op\times\bZ_{\geq 0}^\op$}}\F^n M\\
        &\we\begin{cases}
            0&\text{if $i>0$ or}\\
            \F^jM&\text{if $i=0$.}
        \end{cases}
    \end{align*}
    In other words, $\chi_!\F^\star M$ is the bifiltered complex obtained by
    placing $\F^\star M$ in the $0$th column.
\end{construction}

\begin{definition}[The filtered horizontal connection]
    Let $\F^\star R$ be a nonnegatively
    filtered $\bE_\infty$-ring and let $\F^\star M$ be a nonnegatively
    filtered $\F^\star R$-module. We define the bifiltered object
    $$\F^{\star,\star}_\h(\F^\star M)\we\Delta_!\F^\star R\otimes_{\chi_!\F^\star
    R}\chi_!\F^\star M.$$ For short, we will write the filtered horizontal connection on $\F^\star
    M$ as $\F^{\star,\star}_\h M$. We let $\gr^i\F^\star M$, $\F^\star\gr^j M$, and $\gr^{i,j}_\h
    M$ denote the associated graded filtrations and pieces of the filtered horizontal connection.
\end{definition}

\begin{example}
    We consider the special case of maps $k\rightarrow R\rightarrow S$ of
    derived commutative rings and $\F^\star_\H\dR_{S/k}$ as a filtered
    $\F^\star_{R/k}\dR_{R/k}$-module. Let $\F^{\star,\star}_\h\dR_{S/k}$ be the filtered horizontal
    connection associated to $\F^\star_\H\dR_{S/k}$ as a module over $\F^\star_\H\dR_{R/k}$. We compute
    \begin{align*}
        \gr^{*,*}_\h\dR_{S/k}&\we\gr^{*,*}\Delta_!\F^\star_\H\dR_{R/k}\otimes_{\gr^{*,*}\chi_!\F^\star_\H\dR_{R/k}}\gr^{*,*}\chi_!\F^\star_\H\dR_{S/k}\\
        &\we\left(\bigoplus_{*\geq
        0}\Lambda^*\L_{R/k}[-*](*,*)\right)\otimes_{\left(\bigoplus_{*\geq
        0}\Lambda^*\L_{R/k}[-*](0,*)\right)}\left(\bigoplus_{*\geq
        0}\Lambda^*\L_{S/k}[-*](0,*)\right)\\
        &\we\left(\bigoplus_{*\geq
        0}\Lambda^*\L_{R/k}[-*](*,*)\right)\otimes_{R(0,0)}R(0,0)\otimes_{\left(\bigoplus_{*\geq
        0}\Lambda^*\L_{R/k}[-*](0,*)\right)}\left(\bigoplus_{*\geq
        0}\Lambda^*\L_{S/k}[-*](0,*)\right)\\
        &\we\left(\bigoplus_{*\geq
        0}\Lambda^*\L_{R/k}[-*](*,*)\right)\otimes_{R(0,0)}\left(\bigoplus_{*\geq
        0}\Lambda^*\L_{S/R}[-*](0,*)\right),
    \end{align*}
    where the third equivalence follows from the fact that the action of
    $\left(\bigoplus_{\star\geq 0}\Lambda^\star\L_{R/k}(0,\star)\right)$ on
    $\left(\bigoplus_{\star\geq
    0}\Lambda^\star\L_{R/k}(\star,\star)\right)$ factors through $R(0,0)$
    for degree reasons and the third equivalence follows from
    Proposition~\ref{prop:reductioncollapse}. We give the associated homotopy
    coherent double complex in Figure~\ref{fig:dc}.

    \begin{figure}
      \centering
        $$\xymatrix{
            &\vdots\ar[d]&\vdots\ar[d]&\vdots\ar[d]&\\
            \cdots\ar[r]&S\ar[r]\ar[d]&0\ar[r]\ar[d]&0\ar[r]\ar[d]&\cdots\\
            \cdots\ar[r]&\L_{S/R}\ar[r]\ar[d]&\L_{R/k}[1]\otimes_RS\ar[r]\ar[d]&0\ar[r]\ar[d]&\cdots\\
            \cdots\ar[r]&\Lambda^2\L_{S/R}\ar[r]\ar[d]&\L_{R/k}[1]\otimes_R\L_{S/R}\ar[r]\ar[d]&\Lambda^2\L_{R/k}[2]\otimes_RS\ar[r]\ar[d]&\cdots\\
            &\vdots&\vdots&\vdots
        }$$
        \caption{The double complex picture for the filtered flat filtration
        $\F^{\star,\star}_\h\dR_{S/k}$.}
      \label{fig:dc}
    \end{figure}

    Note that the `differential' $\L_{S/R}\rightarrow \L_{R/k}[1]\otimes_RS$ is
    the boundary map associated to the standard conormal sequence
    $$\L_{R/k}\otimes_R S\rightarrow\L_{S/k}\rightarrow\L_{S/R}.$$
    In general, the rows of Figure~\ref{fig:dc} are coherent cochain complexes corresponding to the
    standard filtration~\cite[Sec.~V.4]{illusie-cotangent-1} for computing
    $\Lambda^i\L_{S/k}$ from the conormal sequence.
\end{example}

We have already seen in Example~\ref{ex:cm} that completion is required in order to obtain useful
computational control via the Gauss--Manin connection.
In order to correctly capture the Gauss--Manin connection on $\dRhat_{S/k}$, we
must carefully complete the filtered horizontal connection while not altering the total complex $\dRhat_{S/k}$
itself. To do so, we will work in a vertically complete setting.

\begin{construction}[Column-complete bifiltrations]
    Thus, consider $$\Fun(\bN^\op,\widehat{\F^+\Mod}_k),$$
    a symmetric monoidal stable $\infty$-category via Day convolution, which uses addition in
    $\bN^\op$ and the completed tensor product on $\widehat{\F^+\Mod}_k$.
    We will write $\overline{\otimes}$ for this tensor product to remind the reader that it is
    completed only in the columns.
    The objects of $\Fun(\bN^\op,\widehat{\F^+\Mod}_k)$ are decreasing nonnegatively indexed filtrations of complete
    nonnegatively indexed filtered objects. Objects $\F^{\star,\star}M$ of $\Fun(\bN^\op,\widehat{\F^+\Mod}_k)\subseteq\BFMod_k$ will be referred to as
    column-complete nonnegative bifiltrations. The column-completeness means that for each $i\geq 0$ the
    limit $\lim_j \F^{i,j}M$ vanishes.
\end{construction}

\begin{example}
    If $\F^\star R$ is a complete filtered $\bE_\infty$-ring, then $\chi_!\F^\star R$ and
    $\Delta_!\F^\star R$ are column-complete nonnegative bifiltrations.
\end{example}

\begin{definition}[Column-complete filtered horizontal connection]
    Let $\F^\star R$ be a complete filtered $\bE_\infty$-ring and let $\F^\star M$ be a complete
    filtered $\F^\star R$-module. We define $$\F^{\star,\star}_{\hat{\h}} M=\Delta_!\F^\star
    R\overline{\otimes}_{\chi_!\F^\star R}\chi_!\F^\star M,$$
    the column-complete filtered horizontal connection.
\end{definition}

\begin{lemma}\label{lem:complete}
    Let $\F^\star R$ be a complete filtered $\bE_\infty$-ring and let $\F^\star
    M$ be a complete filtered $\F^\star R$-module. Then, the filtered
    horizontal connection $$\F^{\star,\star}_{\hat{\h}} M=\chi_!\F^\star M\overline{\otimes}_{\chi_!\F^\star R}\Delta_!\F^\star R$$
    is a complete filtration on $\F^\star M$ with graded pieces
    $$\gr^n\F^\star M\we\F^\star M\widehat{\otimes}_{\F^\star
    R}(\gr^0R\otimes_{\gr^0R}\ins^n\gr^nR).$$
\end{lemma}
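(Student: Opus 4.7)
The strategy is to prove the three claims of the lemma---that $\F^{\star,\star}_{\hat{\h}}M$ is an $i$-indexed filtration with underlying object $\F^\star M$, that this filtration is complete in $i$, and that its $i$-graded pieces are as stated---by directly computing in the explicit bifiltered model and exploiting that both $\chi_!\F^\star R$ and $\chi_!\F^\star M$ are supported in column $0$.

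I would first identify the column-associated-graded of $\Delta_!\F^\star R$. From $\F^{i,j}\Delta_!\F^\star R\we\F^{\max(i,j)}R$, the cofiber $\cofib(\F^{n+1,j}\to\F^{n,j})$ is $\gr^nR$ when $j\leq n$ and $0$ when $j>n$, so $\gr^n\Delta_!\F^\star R\we\ins^n\gr^nR$ as a filtered $\F^\star R$-module via the augmentation $\F^\star R\to\gr^0R$.

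Since the completed tensor $\overline\otimes$ completes only in the row direction, the $n$-th column cofiber functor $\gr^n$ is exact and commutes with $\overline\otimes_{\chi_!\F^\star R}(-)$. Combined with the column-$0$ support of $\chi_!\F^\star R$ and $\chi_!\F^\star M$, which reduces the relative tensor product along these factors to the row-completed tensor of the underlying filtered $\F^\star R$-modules, this yields
\[
\gr^n\F^{\star,\star}_{\hat{\h}}M\we(\gr^n\Delta_!\F^\star R)\,\overline\otimes_{\chi_!\F^\star R}\,\chi_!\F^\star M\we\F^\star M\,\widehat\otimes_{\F^\star R}\,\ins^n\gr^nR,
\]
which is the claimed formula. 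The identification $\F^{0,\star}_{\hat{\h}}M\we\F^\star M$ uses the same mechanism applied to the column-$0$ restriction functor $\chi^*$, which is strong monoidal on pairs of bifiltered objects at least one of which is column-$0$-supported; this gives $\chi^*\F^{\star,\star}_{\hat{\h}}M\we\F^\star R\,\widehat\otimes_{\F^\star R}\,\F^\star M\we\F^\star M$.

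Column-completeness in the first index follows from column-completeness of $\Delta_!\F^\star R$---for fixed $j$, $\F^{i,j}\Delta_!\F^\star R=\F^iR$ for $i\geq j$, whose limit as $i\to\infty$ vanishes by completeness of $\F^\star R$---together with preservation of this property under the column-completed relative tensor product. The main obstacle I foresee is rigorously justifying the commutation of column $\gr^n$ with the relative completed tensor $\overline\otimes_{\chi_!\F^\star R}$: this reduces to verifying that each term in the bar construction computing the relative tensor is column-exact under $\gr^n$, which follows because the completion in $\overline\otimes$ is row-wise and therefore invisible to cofibers in the column direction.
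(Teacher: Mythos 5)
Your identification of the graded pieces and of $\F^{0,\star}_{\hat{\h}}M\we\F^\star M$ is fine and matches the paper's (the paper simply notes that these are clear because $\chi_!\F^\star R$ and $\chi_!\F^\star M$ are concentrated in the $0$th column, so that $\F^{n,\star}_{\hat{\h}}M\we\F^\star M\widehat{\otimes}_{\F^\star R}\F^{\star,\geq n}R$, and your bar-construction justification of commuting the column $\gr^n$ past $\overline{\otimes}$ is a reasonable way to say the same thing). The problem is the completeness step, which is the actual content of the lemma. You assert that vanishing of $\lim_i\F^{i,j}\Delta_!\F^\star R$ for fixed $j$ is inherited ``under the column-completed relative tensor product,'' but this is exactly the statement to be proved, not a formal consequence. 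The relative tensor product is a colimit (a bar construction), and the completion built into $\overline{\otimes}$ acts only in the column ($j$) direction; neither operation commutes with the inverse limit over the first index $i$. Concretely, $\lim_n\bigl(\F^\star M\widehat{\otimes}_{\F^\star R}\F^{\star,\geq n}R\bigr)$ cannot be identified with $\F^\star M\widehat{\otimes}_{\F^\star R}\lim_n\F^{\star,\geq n}R$ by any general preservation principle. That such a principle fails is the whole point of the construction: for the uncompleted horizontal connection of the previous section the analogous filtration is ``typically not complete,'' even though $\Delta_!\F^\star R$ is complete in the first index there as well, so a purely formal transfer of completeness through the tensor product cannot exist.

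What the paper does instead, and what your argument is missing, is an interplay between the two filtration directions: each term $\F^{n,\star}_{\hat{\h}}M\we\F^\star M\widehat{\otimes}_{\F^\star R}\F^{\star,\geq n}R$ is complete as a filtered object in the column direction, hence so is $\lim_n\F^{n,\star}_{\hat{\h}}M$; since $\gr^m$ commutes with limits (it is a shifted fiber in a stable category), one computes $\gr^m\bigl(\lim_n\F^{n,\star}_{\hat{\h}}M\bigr)\we\lim_n\gr^m\bigl(\F^\star M\widehat{\otimes}_{\F^\star R}\F^{\star,\geq n}R\bigr)$, and each of these graded pieces vanishes for $n>m$ because all objects are concentrated in nonnegative weights, so weight $m$ cannot receive a contribution from $\gr^{j}(\F^{\star,\geq n}R)$ with $j\geq n>m$. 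A complete filtered object with vanishing associated graded is zero, which gives the completeness of the Gauss--Manin filtration. You need to supply an argument of this kind (or an equivalent weight/connectivity argument) in place of the asserted preservation; as written, the key claim of the lemma is assumed rather than proved.
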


\begin{proof}
    The claim about the graded pieces is clear since $\chi_!\F^\star M$ and
    $\chi_!\F^\star R$ are concentrated in the $0$ column. The content of
    the lemma is that $\F^{\star,\star}_{\hat{\h}} M$, viewed as an object of
    $\Fun(\bN^\op,\widehat{\F^+\Mod}_k)$, is complete. Because
    $\F^{n,\star}\chi_!\F^\star R\we 0$ for $n>0$ and similarly for
    $\chi_!\F^\star M$, we have $$\F^{n,\star}_{\hat{\h}}
    M\we\F^{0,\star}\chi_!M\widehat{\otimes}_{\F^{0,\star}\chi_!\F^\star R}\F^{n,\star}\Delta_!R\simeq\F^\star
    M\widehat{\otimes}_{\F^\star R}\F^{\star,\geq n}R.$$
    In particular, $\F^{0,\star}_{\hat{\h}}M\we\F^\star M$ so the result of the completeness claim will be a
    complete filtration on $\F^\star M$.
    Each of these pieces is complete, so the limit
    $$\lim_n\F^{n,\star}_{\hat{\h}} M$$ is complete. We compute the graded pieces
    $$\gr^m\left(\lim_n\F^{n,\star}_{\hat{\h}} M\right)\we\lim_n\gr^m(\F^{n,\star}_{\hat{\h}}
    M)\simeq\lim_n\gr^m(\F^\star M\widehat{\otimes}_{\F^\star R}\F^{\star\geq
    n}R).$$
    But, $\gr^m(\F^\star M\widehat{\otimes}_{\F^\star R}\F^{\star\geq
    n}R)$ vanishes for $n>m$ since everything is concentrated in nonnegative
    weights, so the limit is zero.
\end{proof}

The column-complete filtered horizontal connection allows us to establish the completeness of the
Gauss--Manin connection and the following theorem. The Griffiths transversality statement follows
for free from the fact that $\gr^i\F^\star\Delta_! M\we\ins^i\gr^iM$.

\begin{theorem}[The Gauss--Manin connection]\label{thm:griffiths}
    Let $k\rightarrow R\rightarrow S$ be maps of derived commutative rings.
    There is a complete multiplicative filtration
    $\F^\star_\GM\F^\star_\H\dRhat_{S/k}$ on $\F^\star_\H\dRhat_{S/k}$ with
    graded pieces $$\F^\star_\H\dRhat_{S/R}\widehat{\otimes}_R\ins^n\Lambda^n\L_{R/k}[-n].$$ We
    can view this as a coherent cochain complex (in complete filtered
    complexes)
    $$\F^\star_\H\dRhat_{S/R}\xrightarrow{\nabla}\F^{\star-1}_\H\dRhat_{S/R}\widehat{\otimes}_R\Lambda^1\L_{R/k}\xrightarrow{\nabla}\F^{\star-2}_\H\dRhat_{S/R}\widehat{\otimes}_R\Lambda^2\L_{R/k}\rightarrow\cdots.$$
\end{theorem}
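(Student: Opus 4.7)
The plan is to set $\F^\star_\GM\F^\star_\H\dRhat_{S/k} := \F^{\star,\star}_{\hat{\h}}\F^\star_\H\dRhat_{S/k}$, applying the column-complete filtered horizontal connection with $\F^\star R = \F^\star_\H\dRhat_{R/k}$ and $\F^\star M = \F^\star_\H\dRhat_{S/k}$. Everything we need is then essentially encoded in Lemma~\ref{lem:complete} together with the collapse formula of Proposition~\ref{prop:reductioncollapse}; the remaining work is bookkeeping on graded pieces and weights.

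First I would verify completeness: the Hodge-completed derived de Rham cohomology $\F^\star_\H\dRhat_{R/k}$ is a complete filtered derived commutative ring and $\F^\star_\H\dRhat_{S/k}$ is a complete filtered module over it, so Lemma~\ref{lem:complete} directly gives that $\F^\star_\GM\F^\star_\H\dRhat_{S/k}$ is a complete filtration on $\F^\star_\H\dRhat_{S/k}$. Multiplicativity follows from the fact that $\Delta_!$ is symmetric monoidal and the natural transformation $\chi_!\rightarrow\Delta_!$ is monoidal, so the relative tensor product $\Delta_!\F^\star R\,\overline{\otimes}_{\chi_!\F^\star R}\chi_!\F^\star R$ is canonically an $\bE_\infty$-algebra in column-complete bifiltered objects, with the module version inheriting its structure.

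Next I would compute the graded pieces. From Lemma~\ref{lem:complete},
\begin{equation*}
\gr^n\F^\star_\GM\F^\star_\H\dRhat_{S/k}\we \F^\star_\H\dRhat_{S/k}\widehat{\otimes}_{\F^\star_\H\dRhat_{R/k}}\bigl(R\otimes_R\ins^n\gr^n\F^\star_\H\dRhat_{R/k}\bigr).
\end{equation*}
Using the classical computation $\gr^n\F^\star_\H\dRhat_{R/k}\we\Lambda^n\L_{R/k}[-n]$ and the collapse formula $\F^\star_\H\dRhat_{S/k}\widehat{\otimes}_{\F^\star_\H\dRhat_{R/k}}R\we\F^\star_\H\dRhat_{S/R}$, one rewrites the tensor product as
\begin{equation*}
\gr^n\F^\star_\GM\F^\star_\H\dRhat_{S/k}\we\F^\star_\H\dRhat_{S/R}\widehat{\otimes}_R\ins^n\Lambda^n\L_{R/k}[-n],
\end{equation*}
exactly as claimed. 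The key manipulation is to factor the extension-of-scalars through $R$, using that $\ins^n\gr^nR$ is already an $R$-module placed in weight $n$; this factorization is legitimate because the completion in $\widehat{\F^+\Mod}_k$ respects this weight decomposition.

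Finally I would translate the filtration into the Griffiths-transversal coherent cochain complex. Since $\ins^n\Lambda^n\L_{R/k}[-n]$ is concentrated in Hodge weight $n$, the tensor product $\F^\star_\H\dRhat_{S/R}\widehat{\otimes}_R\ins^n\Lambda^n\L_{R/k}[-n]$ evaluated at Hodge weight $\star$ is $\F^{\star-n}_\H\dRhat_{S/R}\widehat{\otimes}_R\Lambda^n\L_{R/k}[-n]$, producing the shifted arrows of the stated cochain complex via the standard equivalence between complete exhaustive filtrations and coherent cochain complexes (cf.\ \cite{ariotta,raksit}). I expect the main obstacle to be purely expository: namely, carefully tracking the two weight gradings through the identifications above so that the final displayed cochain complex has the right shifts in the Hodge direction. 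Everything else is formal consequence of Lemma~\ref{lem:complete} and the collapse formula.
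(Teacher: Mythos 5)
Your proposal is correct and matches the paper's proof essentially step for step: the paper also defines $\F^\star_\GM\F^\star_\H\dRhat_{S/k}$ as the column-complete filtered horizontal connection of $\F^\star_\H\dRhat_{S/k}$ over $\F^\star_\H\dRhat_{R/k}$, gets completeness from Lemma~\ref{lem:complete}, the graded pieces from the collapse formula of Proposition~\ref{prop:reductioncollapse}, and multiplicativity from the tensor-product description. Your extra care in factoring the graded-piece computation through $R$ and in tracking the weight shift $\ins^n$ is just an expansion of what the paper leaves implicit.
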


\begin{proof}
    Set $\F^\star_\GM\F^\star_\H\dRhat_{S/k}=\F^{\star,\star}_{\hat{\h}}\dR_{S/k}$, the column-complete
    filtered horizontal connection associated to the $\F^\star_\H\dR_{R/k}$-module
    $\F^\star_\H\dR_{S/k}$. Completeness follows from Lemma~\ref{lem:complete}. The form of the
    associated coherent cochain complex follows from Proposition~\ref{prop:reductioncollapse}.
    Multiplicativity follows from the definition as a tensor product of bifiltered
    $\bE_\infty$-rings.
\end{proof}

\begin{numberedproof}[Proof of Theorem~\ref{thm:main}]\label{proof:main}
    Part (c) is Theorem~\ref{thm:griffiths}. The other parts follow from identical arguments.
    Completeness in part (a) follows as in Example~\ref{ex:flathh}. The $\bT_\fil$-equivariance in
    part (a) follows from Section~\ref{sec:derived}; specifically, the
    $\chi_!\bT_\fil^\vee$-coaction on
    $$\Delta_!\HH_\fil(\R/k)\otimes_{\chi_!\HH(R/k)}\chi_!\HH_\fil(S/k)$$ corresponds to a
    filtration on a filtered $k$-module with $\bT_\fil^\vee$-coaction. \qedsymbol{}
\end{numberedproof}

\begin{remark}[Griffiths transversality]
    We write down the filtered flat connection in a slightly different way to see Griffiths
    transversality in another way. We look at the column cochain complex picture. If we filter by
    columns, we get cochain complexes in filtered complexes:
    $$\gr^\star\F^\star(\Delta_!\F^\star R)\colon\cdots\rightarrow
    0\rightarrow\ins^0\gr^0R\rightarrow\ins^1\gr^1R\rightarrow\ins^2\gr^2\R\rightarrow\cdots$$
    and $$\gr^\star\F^\star(\chi_!\F^\star R)\colon\cdots\rightarrow
    0\rightarrow\F^\star R\rightarrow 0\rightarrow\cdots.$$
    In particular, we find that
    \begin{gather*}
        \gr^\star\F^\star(\Delta_!(\F^\star R)\otimes_{\chi_!(\F^\star R)}\chi_!(\F^\star M))\we\\
        \left(\cdots\rightarrow
        0\rightarrow (\ins^0\gr^0R)\otimes_{\F^\star R}\F^\star
        M\rightarrow(\ins^1\gr^1R)\otimes_{\F^\star R}\F^\star
        M\rightarrow(\ins^2\gr^2R)\otimes_{\F^\star R}\F^\star M\rightarrow\cdots\right),
    \end{gather*}
    which we may rewrite as $$\cdots\rightarrow
    0\rightarrow\F^\star\overline{M}\rightarrow\gr^1R\otimes_{\gr^0R}\F^{\star-1}
    \overline{M}\rightarrow\gr^2R\otimes_{\gr^0R}\F^{\star-2}\overline{M}\rightarrow\cdots,$$
    where $\F^\star\overline{M}$ is $\ins^0\gr^0R\otimes_{\F^\star R}\F^\star M$; this is
    \defidx{Griffiths transversality}.
\end{remark}

\begin{remark}
    For maps of connective derived commutative rings, part (c) of Theorem~\ref{thm:main} could be
    obtained by `deriving' the Katz--Oda connection. Parts (a) and (b) appear to be genuinely
    non-classical.
\end{remark}

\section{Derived algebraic structures}\label{sec:derived}

The following material will not be used elsewhere (except in the justification of the
$\bT_\fil$-equivariance in Theorem~\ref{thm:main}(a)); most readers should skip it.
The Gauss--Manin connection on derived de Rham cohomology (or infinitesimal cohomology,
Hochschild homology, or mixed homology) admits a natural universal property. As
with Raksit's use of crystalline filtered derived commutative algebras, it
seems cleanest to explain the universal property, and the associated notion of
derived bifiltered commutative ring, starting with the case of Hochschild
homology and then shearing down.

If $k$ is a commutative ring, then
on $\BFMod_k$ there is a derived algebraic context in the sense of~\cite{raksit} for which the
connective objects are those bifiltrations $\F^{\star,\star}M$ such
that $\F^{m,n}M$ is in $\Mod_k^\cn$ for each $(m,n)\in\bZ\times\bZ$. The
derived commutative algebras with respect to this context are called
bifiltered derived commutative $k$-algebras. Both
$\chi_!\colon\FMod_k\rightarrow\BFMod_k$ and
$\Delta_!\colon\FMod_k\rightarrow\BFMod_k$ are symmetric monoidal functors of the neutral derived
algebraic contexts and, when restricted to nonnegatively weighted objects, the
natural transformation $\chi_!\rightarrow\Delta_!$ is a natural transformation
of such functors.

It follows that $\chi_!\bT_\fil^\vee$ and $\Delta_!\bT_\fil^\vee$ are 
derived bicommutative bialgebras in $\BFMod_k$. Similarly, $\chi_!\HH_\fil(R/k)$
and $\Delta_!\HH_\fil(R/k)$ are derived commutative algebras in $\BFMod_k$ with
$\chi_!\bT_\fil^\vee$ and $\Delta_!\bT_\fil^\vee$-coactions. There is in fact a natural map
bicommutative bialgebras $$\Delta_!\bT_\fil^\vee\rightarrow\chi_!\bT_\fil^\vee.$$
There is in particular a natural $\chi_!\bT_\fil^\vee$-coaction on
$\Delta_!\HH_\fil(R/k)$ obtained by corestriction of scalars
and hence there is a $\chi_!\bT_\fil^\vee$-coaction on
$$\Delta_!\HH_\fil(R/k)\otimes_{\chi_!\HH_\fil(R/k)}\chi_!\HH_\fil(S/k).$$
This derived bifiltered commutative algebra with $\chi_!\bT_\fil^\vee$-action is
easily shown to admit a universal property.

\begin{lemma}
    The composite left adjoint
    $$\xymatrix{
        \DAlg_R\ar[r]^<<<<{\HH_\fil(-/k)}&(\cMod_{\bT^\vee_\fil}(\FDAlg_k))_{\HH_\fil(R/k)/}\ar[r]^<<<<{\chi_!}&
        (\cMod_{\chi_!\bT_\fil^\vee}(\mathrm{BFDAlg}_k))_{\chi_!\HH_\fil(R/k)/}\ar[d]^{\Delta_!\HH_\fil(R/k)\otimes_{\chi!\HH_\fil(R/k)}(-)}\\
        &&(\cMod_{\chi_!\bT_\fil^\vee}(\mathrm{BFDAlg}_k))_{\Delta_!\HH_\fil(R/k)/}
    }$$
    is $\Delta_!\HH_\fil(R/k)\otimes_{\chi_!\HH_\fil(R/k)}\chi_!\HH_\fil(S/k)$
    with the $\chi_!\bT_\fil$-action above.
\end{lemma}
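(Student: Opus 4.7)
The plan is to observe that the displayed composite is an iterated composition of three explicit left adjoints, each of which has an easily-described right adjoint, so proving the lemma reduces to unwinding what each step does on the object $S\in\DAlg_R$. The first arrow, $\HH_\fil(-/k)$, is left adjoint to ``forget the $\bT_\fil^\vee$-coaction and take $\F^0$'' by the filtered Hochschild variant of Proposition~\ref{prop:gmraksit} recorded in Section~\ref{sec:sym}; it sends $S$ to $\HH_\fil(S/k)$. This step is essentially formal once one invokes the comma-category machinery of Exercise~\ref{exercise:comma} to pass to the slice over $\HH_\fil(R/k)$.

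For the second arrow, I would argue that $\chi_!\colon\F^+\Mod_k\rightarrow\BF^+\Mod_k$ is a symmetric monoidal left adjoint of neutral derived algebraic contexts, so it lifts to a symmetric monoidal left adjoint on bifiltered derived commutative $k$-algebras, with right adjoint the restriction $\chi^*$. Because $\chi_!$ is symmetric monoidal, it carries the bicommutative bialgebra $\bT_\fil^\vee$ to the bicommutative bialgebra $\chi_!\bT_\fil^\vee$ and comodules to comodules, yielding a left adjoint on comodule categories; again the comma-category trick promotes this to the required slice over $\chi_!\HH_\fil(R/k)$. On objects this step sends $\HH_\fil(S/k)$ to $\chi_!\HH_\fil(S/k)$.

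For the third arrow, observe that the natural transformation $\chi_!\rightarrow\Delta_!$ of symmetric monoidal functors gives a map $\chi_!\HH_\fil(R/k)\rightarrow\Delta_!\HH_\fil(R/k)$ of bifiltered derived commutative $k$-algebras, and corestriction along the bialgebra map $\Delta_!\bT_\fil^\vee\rightarrow\chi_!\bT_\fil^\vee$ endows $\Delta_!\HH_\fil(R/k)$ with a compatible $\chi_!\bT_\fil^\vee$-coaction. Extension of scalars $\Delta_!\HH_\fil(R/k)\otimes_{\chi_!\HH_\fil(R/k)}(-)$ is then a left adjoint to restriction of scalars in $\cMod_{\chi_!\bT_\fil^\vee}(\mathrm{BFDAlg}_k)$ by the standard existence of such base-change adjunctions in a presentable symmetric monoidal $\infty$-category. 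Its value on $\chi_!\HH_\fil(S/k)$ is by definition $\Delta_!\HH_\fil(R/k)\otimes_{\chi_!\HH_\fil(R/k)}\chi_!\HH_\fil(S/k)$.

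Composing the three left adjoints and reading off the value on $S$ yields the claimed formula. The only real obstacle is bookkeeping: one must check that at each stage the relevant slice under $\chi_!\HH_\fil(R/k)$ or $\Delta_!\HH_\fil(R/k)$ is preserved and that the $\chi_!\bT_\fil^\vee$-coaction propagates coherently. I expect this to be routine: the first two checks are instances of Exercise~\ref{exercise:comma} applied to symmetric monoidal adjunctions, and the third reduces to the compatibility of restriction/corestriction of coactions with tensor products of comodules, all of which are standard in the presentable symmetric monoidal $\infty$-categorical setting.
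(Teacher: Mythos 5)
Your proposal is correct and follows essentially the same route the paper intends: the lemma is treated as formal once one has the filtered Hochschild adjunction, the symmetric monoidal functors $\chi_!$ and $\Delta_!$ of derived algebraic contexts with the transformation $\chi_!\rightarrow\Delta_!$, and the corestriction of the $\chi_!\bT_\fil^\vee$-coaction, which is precisely the setup laid out before the lemma. Your unwinding of the three left adjoints and their right adjoints matches that argument.
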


Put another way, given a derived commutative $R$-algebra $S$, the data of a $\chi_!\bT_\fil$-equivariant derived bifiltered
$\Delta_!\HH_\fil(R/k)$-algebra $\F^{\star,\star}T$ and a
$\chi_!\bT_\fil$-equivariant map
$\Delta_!\HH_\fil(R/k)\otimes_{\chi_!\HH_\fil(R/k)}\chi_!\HH_\fil(S/k)\rightarrow\F^{\star\star}T$
is equivalent to the data of a derived commutative $R$-algebra map $S\rightarrow\F^{0,0}T$.

Now, note that $\F^\star\gr^*\chi_!\bT_\fil^\vee[-2*]$ is equivalent to the filtered graded object
$\ins^0\bD_-^\vee$. Thus, comodules over this object in $\F\Gr\Mod_k$ are equivalent to the full
subcategory of bifiltered objects which are column complete.

\begin{lemma}
    If $k\rightarrow R\rightarrow S$ are maps of derived commutative rings,
    then there is a natural equivalence
    $$\F^\star\gr^*\left(\Delta_!\HH_\fil(R/k)\otimes_{\chi_!\HH_\fil(R/k)}\HH_\fil(S/k)\right)[-2*]\we\Delta_!\F^\star_\H\dRhat_{R/k}\widehat{\otimes}_{\chi_!\F^\star_\H\dRhat_{R/k}}\chi_!\F^\star_\H\dRhat_{S/k}.$$
\end{lemma}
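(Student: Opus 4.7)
The plan is to observe that both sides are related by the shearing equivalence $\F^\star\gr^\ast(-)[-2\ast]$ that converts HKR-filtered Hochschild homology into Hodge-completed derived de Rham cohomology, and then check that this shearing intertwines the constructions $\chi_!$, $\Delta_!$ and the relative tensor product.

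The first step is to recall that there is a symmetric monoidal shearing functor
$$\mathrm{sh}\colon\cMod_{\bT_\fil^\vee}(\FMod_k)\longrightarrow\widehat{\F\Mod}_k,\qquad\F^\star M\longmapsto\F^\star\gr^\ast M[-2\ast],$$
whose image naturally takes values in complete filtered $k$-modules (this is the filtered form of the HKR correspondence, as spelled out in the prequel~\cite{antieau_crystallization}). Under this shearing, $\HH_\fil(R/k)\mapsto\F^\star_\H\dRhat_{R/k}$ and $\HH_\fil(S/k)\mapsto\F^\star_\H\dRhat_{S/k}$ as filtered derived commutative $k$-algebras. The analogous statement for bifiltrations involves column-complete objects: as noted in the paragraph preceding the lemma, $\F^\star\gr^\ast\chi_!\bT_\fil^\vee[-2\ast]\we\ins^0\bD_-^\vee$, and comodules over this bialgebra are exactly column-complete bifiltrations. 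So the shearing extends to a symmetric monoidal functor
$$\mathrm{sh}\colon\cMod_{\chi_!\bT_\fil^\vee}(\mathrm{BFMod}_k)\longrightarrow\Fun(\bN^\op,\widehat{\F^+\Mod}_k),$$
in which the relative tensor product on the right uses the column-completed tensor product $\overline{\otimes}$.

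The second step is to check that $\chi_!$ and $\Delta_!$ commute with shearing. Both are symmetric monoidal left adjoints obtained by left Kan extension along the diagonal or column inclusion $\bN\hookrightarrow\bN\times\bN$, and shearing only acts on the weight-grading coming from the filtration, not on the newly introduced bifiltration index. From this formal fact one deduces natural equivalences
$$\mathrm{sh}(\Delta_!\HH_\fil(R/k))\we\Delta_!\F^\star_\H\dRhat_{R/k},\quad \mathrm{sh}(\chi_!\HH_\fil(R/k))\we\chi_!\F^\star_\H\dRhat_{R/k},\quad \mathrm{sh}(\chi_!\HH_\fil(S/k))\we\chi_!\F^\star_\H\dRhat_{S/k},$$
each compatible with the respective coalgebra/module structures.

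The third step is to apply $\mathrm{sh}$ to the tensor product on the left-hand side. Because $\mathrm{sh}$ is symmetric monoidal and lands in column-complete bifiltrations, it sends the relative tensor product of $\chi_!\bT_\fil^\vee$-comodules in $\mathrm{BFMod}_k$ to the column-completed relative tensor product in $\Fun(\bN^\op,\widehat{\F^+\Mod}_k)$. This yields exactly the right-hand side. The main obstacle is verifying that the uncompleted relative tensor product $\otimes_{\chi_!\HH_\fil(R/k)}$ on the left transports to the completed relative tensor product $\widehat{\otimes}_{\chi_!\F^\star_\H\dRhat_{R/k}}$ on the right; this is where the automatic column-completeness of the essential image of $\mathrm{sh}$ does all the work, exactly as it is the reason the Hodge filtration on $\dRhat$ must be completed while the HKR filtration on $\HH$ need not be.
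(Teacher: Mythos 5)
Your overall strategy is the same as the paper's: shear by $\F^\star\gr^*(-)[-2*]$, use the observation that $\F^\star\gr^*\chi_!\bT_\fil^\vee[-2*]\we\ins^0\bD_-^\vee$ so that the image consists of column-complete bifiltrations and the relative tensor product becomes the column-completed one, identify the sheared $\chi_!$-terms with $\chi_!\F^\star_\H\dRhat_{R/k}$ and $\chi_!\F^\star_\H\dRhat_{S/k}$, and conclude by symmetric monoidality. Those parts match the paper's proof step for step.

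The weak point is your second step, the claim that shearing commutes with $\Delta_!$ ``because shearing only acts on the weight-grading coming from the filtration, not on the newly introduced bifiltration index.'' That heuristic is accurate for $\chi_!$, which places the HKR filtration entirely in one coordinate, but it fails as a justification for $\Delta_!$: both coordinates of $\Delta_!\HH_\fil(R/k)$ carry the HKR weight, since $\F^{i,j}\Delta_!\F^\star M\we\F^{\max(i,j)}M$, so the shearing coordinate is not independent of the retained filtration, and the two shearing functors involved live on different categories (filtered $\bT_\fil^\vee$-comodules versus bifiltered $\chi_!\bT_\fil^\vee$-comodules), so there is no purely formal Kan-extension argument to quote. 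What is actually needed is the short computation that the paper supplies at exactly this point: one checks that $\F^\star\gr^j\Delta_!\HH_\fil(R/k)\we\ins^j\gr^j\HH_\fil(R/k)$, so after the shift $[-2j]$ the weight-$j$ piece is $\ins^j\Lambda^j\L_{R/k}[-j]$, and the resulting filtered coherent cochain complex is
$$\ins^0R\rightarrow\ins^1\L_{R/k}\rightarrow\ins^2\Lambda^2\L_{R/k}\rightarrow\cdots,$$
which is precisely $\Delta_!\F^\star_\H\dRhat_{R/k}$. With this one-line verification inserted in place of the formal commutation claim, your argument coincides with the paper's.
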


\begin{proof}
    In fact, $\F^\star\gr^*\chi_!\HH_\fil(R/k)[-2*]\we\chi_!\F^\star_\H\dRhat_{R/k}$ and
    similarly for $\chi_!\HH_\fil(S/k)$. By symmetric monoidality, it is enough to
    show that
    $\F^\star\gr^*\Delta_!\HH_\fil(R/k)[-2*]\we\Delta_!\dRhat_{R/k}$.
    The filtered coherent cochain complex of
    $\F^\star\gr^*\Delta_!\F^\star_\H\HH_\fil(R/k)[-2*]$
    is
    $$\ins^0R\rightarrow\ins^1\L_{R/k}\rightarrow\ins^2\Lambda^2\L_{R/k}\rightarrow\cdots,$$
    which is precisely $\Delta_!\dRhat_{R/k}$.
\end{proof}

\begin{definition}
    On $\F\Gr\Mod_k$ we can consider the monad which is infinitesimal filtered in the filtration
    direction and graded crystalline in the graded side. Let $\F\Gr\DAlg_k^{\inf,\crys}$ denote the
    $\infty$-category of left modules for the monad. The object $\ins^0\bD_-^\vee$ is an
    example of a cocommutative coalgebra in $\F\Gr\DAlg^{\inf,\crys}$. So, we can consider the
    $\infty$-category of comodules $\cMod_{\ins^0\bD_-^\vee}(\F\Gr\DAlg_k^{\inf,\crys})$. The
    objects of this category are some sort of column-complete bifiltered derived commutative ring.
    Examples include $\chi_!\F^\star\dRhat_{R/k}$ and $\Delta_!\F^\star\dRhat_{R/k}$.
\end{definition}

\begin{proposition}
    There is an adjunction
    $$\F^\star_\GM\F^\star_\H\dR_{-/k}\colon\DAlg_R\rightleftarrows(\cMod_{\ins^0\bD_-^\vee}(\F\Gr\DAlg_k^{\inf,\crys}))_{\Delta_!\F^\star\dRhat_{R/k}/}\colon\F^0\gr^0.$$
\end{proposition}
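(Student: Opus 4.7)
The plan is to build this adjunction as a three-step composite obtained from Raksit's adjunction by pushing through the symmetric monoidal functor $\chi_!$ and then extending scalars along the natural transformation $\chi_!\rightarrow\Delta_!$. At each step I would invoke the comma-category device of Exercise~\ref{exercise:comma} to install the correct basepoint.

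First I would start from the decompleted Raksit adjunction of Proposition~\ref{prop:gmraksit} applied to $R\in\DAlg_k$:
$$\F^\star_\H\dR_{(-)/k}\colon\DAlg_R\rightleftarrows \FDAlg^{\crys}_{\F^\star_\H\dR_{R/k}}\colon \gr^0.$$
Next I would postcompose with $\chi_!$. Restricted to nonnegatively weighted objects, $\chi_!$ is symmetric monoidal between the neutral derived algebraic contexts and carries crystalline filtered derived commutative $k$-algebras to the essential image cut out by the $\ins^0\bD_-^\vee$-coaction inside $\F\Gr\DAlg_k^{\inf,\crys}$ (this is the content of the lemma immediately preceding the proposition, together with the discussion of $\chi_!$ as symmetric monoidal between derived algebraic contexts in Section~\ref{sec:derived}). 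Applying Exercise~\ref{exercise:comma} to the object $\F^\star_\H\dRhat_{R/k}$ then yields an induced adjunction whose left adjoint is $\chi_!\F^\star_\H\dR_{(-)/k}$ landing in $(\cMod_{\ins^0\bD_-^\vee}(\F\Gr\DAlg_k^{\inf,\crys}))_{\chi_!\F^\star_\H\dRhat_{R/k}/}$ and whose right adjoint extracts the $0$th column of a bifiltered presentation.

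Third I would extend scalars along the map $\chi_!\F^\star_\H\dRhat_{R/k}\rightarrow \Delta_!\F^\star_\H\dRhat_{R/k}$ induced by the natural transformation $\chi\Rightarrow\Delta$. This gives an adjunction between slice categories under the two basepoints whose left adjoint is $\Delta_!\F^\star_\H\dRhat_{R/k}\widehat{\otimes}_{\chi_!\F^\star_\H\dRhat_{R/k}}(-)$ and whose right adjoint is restriction of scalars. Composing the three adjunctions, the resulting left adjoint sends $S\in\DAlg_R$ to
$$\Delta_!\F^\star_\H\dRhat_{R/k}\widehat{\otimes}_{\chi_!\F^\star_\H\dRhat_{R/k}}\chi_!\F^\star_\H\dRhat_{S/k},$$
which, after applying the $\F^\star\gr^*[-2*]$ shear of the lemma just above the proposition, is exactly $\F^\star_\GM\F^\star_\H\dRhat_{S/k}$ by the construction in Theorem~\ref{thm:griffiths}. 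The three composed right adjoints (restriction, extracting the $0$th column, $\gr^0$) combine to the operation labeled $\F^0\gr^0$: in the filtered-graded presentation, $\F^0$ picks the underlying algebra in the Hodge direction and $\gr^0$ picks its weight-zero piece in the graded direction, sending the basepoint $\Delta_!\F^\star_\H\dRhat_{R/k}$ to $R$ and $\F^\star_\GM\F^\star_\H\dRhat_{S/k}$ to $S$.

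The main obstacle is the bookkeeping: I would need to verify carefully that $\chi_!$ and $\Delta_!$, restricted to nonnegatively weighted objects, are symmetric monoidal between the relevant derived algebraic contexts; that the crystalline and infinitesimal monad structures are correctly mixed in the filtered-graded target so that the target category is precisely $\F\Gr\DAlg_k^{\inf,\crys}$; and that the $\ins^0\bD_-^\vee$-comodule condition really does carve out the essential image of $\chi_!$ at the algebra level, so that the algebra-level adjunction induced by $\chi_!$ maps into the comodule category rather than merely into filtered-graded algebras. Once these points are in place, the proposition follows formally from Exercise~\ref{exercise:comma} applied iteratively.
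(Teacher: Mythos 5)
Your outline is essentially the argument the paper intends (the paper itself leaves this proof to the reader, and your three-step composite mirrors exactly the structure of the unnumbered lemma for Hochschild homology earlier in Section~\ref{sec:derived}: relative adjunction from Proposition~\ref{prop:gmraksit}, post-compose with $\chi_!$ via Exercise~\ref{exercise:comma}, then extend scalars along $\chi_!\rightarrow\Delta_!$), so the overall strategy is sound. Two small corrections to the bookkeeping. First, you should run the argument with the \emph{completed} adjunction $\F^\star_\H\dRhat_{(-)/k}\colon\DAlg_R\rightleftarrows\widehat{\FDAlg}^{\crys}_{\F^\star_\H\dRhat_{R/k}}\colon\gr^0$ of Proposition~\ref{prop:gmraksit}, not the decompleted one you cite at the outset: $\chi_!$ of an incomplete Hodge filtration is not column-complete, so it does not land in $\cMod_{\ins^0\bD_-^\vee}(\F\Gr\DAlg_k^{\inf,\crys})$, whereas $\chi_!$ and $\Delta_!$ of complete filtered rings do (your later formulas already use the hatted objects, so this is a slip rather than a structural problem). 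Second, no shear is needed at the end: the composite left adjoint applied to $S$ is $\Delta_!\F^\star_\H\dRhat_{R/k}\,\overline{\otimes}_{\chi_!\F^\star_\H\dRhat_{R/k}}\chi_!\F^\star_\H\dRhat_{S/k}$, which is \emph{by definition} the column-complete filtered horizontal connection, i.e.\ $\F^\star_\GM\F^\star_\H\dRhat_{S/k}$ of Theorem~\ref{thm:griffiths}; the $\F^\star\gr^*(-)[-2*]$ shear in the lemma preceding the proposition is only the device for comparing the Hochschild-homology construction with the de Rham one, and applying it again to the de Rham object would give the wrong answer. What you do need there is just the identification of column-complete nonnegative bifiltrations with $\ins^0\bD_-^\vee$-comodules in filtered graded objects (no shift), under which the relative tensor product in the comodule category is automatically the column-completed one $\overline{\otimes}$, and under which $\F^0\gr^0$ sends the basepoint $\Delta_!\F^\star_\H\dRhat_{R/k}$ to $R$ and the Gauss--Manin object to $S$, as you say.
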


\begin{proof}
    We leave this for the reader.
\end{proof}

\section{Descent}\label{sec:descent}

In this short section we summarize our forthcoming work with Ryomei Iwasa and Achim Krause on descent for
invariants such as those considered in this paper; see also~\cite[Lem.~3.14]{akn-delta}. To this end, we consider the $\infty$-category
$\Pairs(\DAlg^\cn_\bZ)$ consisting of maps $A\rightarrow R$ of connective derived commutative
rings. We denote such a pair as $(R/A)$ for short.
We say that a collection $\{(R/A)\rightarrow(S_i/B_i)\}_{i\in I}$ is a canonical cover if
the maps $\{R\rightarrow S_i\}_{i\in I}$ define a canonical cover of $R$ in $\DAlg^\cn_\bZ$. Concretely, this
means that the diagram $$R\rightarrow\prod_{i\in I}S_i\stack{3}\prod_{i,j\in I}S_i\otimes_R
S_j\stack{5}\cdots$$
is a limit diagram and this remains true after base change.

\begin{theorem}\label{thm:descent}
    The following functors have descent for the canonical topology on $\Pairs(\DAlg^\cn_\bZ)$:
    \begin{enumerate}
        \item[{\em (a)}] $\HH(-/-)\colon\Pairs(\DAlg^\cn_\bZ)\rightarrow\DAlg_\bZ$;
        \item[{\em (b)}] $\HH_\fil(-/-)\colon\Pairs(\DAlg^\cn_\bZ)\rightarrow\FDAlg_\bZ$;
        \item[{\em (c)}] $\Infhat_{-/-}\colon\Pairs(\DAlg^\cn_\bZ)\rightarrow\DAlg_\bZ$;
        \item[{\em (d)}]
            $\F^\star_\H\Infhat_{-/-}\colon\Pairs(\DAlg^\cn_\bZ)\rightarrow\widehat{\FDAlg}_\bZ^\inf$;
        \item[{\em (e)}]
            $\dRhat_{-/-}\colon\Pairs(\DAlg^\cn_\bZ)\rightarrow\DAlg_\bZ$;
        \item[{\em (f)}]
            $\F^\star_\H\dRhat_{-/-}\colon\Pairs(\DAlg^\cn_\bZ)\rightarrow\widehat{\FDAlg}^\crys_\bZ$.
    \end{enumerate}
\end{theorem}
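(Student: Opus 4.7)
The plan is to reduce the six statements to one another, isolating descent for unfiltered Hochschild homology as the core input, and then establish that core input using the symmetric monoidality theorem of Section~\ref{sec:sym}. The common template is as follows: given a canonical cover $\{(R/A)\rightarrow(S_i/B_i)\}_{i\in I}$ in $\Pairs(\DAlg^\cn_\bZ)$, one forms the \v{C}ech nerve and uses symmetric monoidality to identify the result of applying a functor $F$ (from the list (a)--(f)) with the Amitsur/cobar complex of $F(R/A)\rightarrow\prod_i F(S_i/B_i)$ in the appropriate $\infty$-category. So the task always reduces to checking that this Amitsur complex is a limit diagram.

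First I would handle the unfiltered Hochschild case (a). Using the fact that $\HH(R/A)\we R\otimes_A S^1$ is a colimit of copies of $R$ in $\DAlg_A$, and that canonical covers are preserved under tensor products (by the universal property of the canonical topology), I would argue that the \v{C}ech nerve of a canonical cover of $(R/A)$ is transported by $\HH(-/-)$ to a \v{C}ech nerve of a canonical cover of $\HH(R/A)$; descent of the identity functor on $\DAlg^\cn_\bZ$ against the canonical topology is tautological, and this would give the result. Case (b) would follow by checking on associated graded pieces: the HKR filtration is complete and connectively concentrated for pairs of connective rings, so descent of $\HH_\fil(-/-)$ in $\widehat{\FDAlg}_\bZ$ can be tested after $\gr^\star$, where the graded pieces $\Lambda^i\L_{-/-}[i]$ visibly satisfy canonical descent (a standard fact about the derived cotangent complex).

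For (c)--(f), I would exploit the universal properties established in Section~\ref{sec:sym}, in particular Proposition~\ref{prop:gmraksit} and its infinitesimal variant. The adjunction
$$\F^\star_\H\dRhat_{(-)/k}\colon\DAlg_R\rightleftarrows\widehat{\FDAlg}^\crys_{\F^\star_\H\dRhat_{R/k}}\colon\gr^0$$
and its infinitesimal analogue intertwine the de Rham/infinitesimal-side limit diagrams with limit diagrams on the $\DAlg$ side. Since Hodge completion is a limit operation (the limit of the associated tower of truncations) and limits commute with limits, the Hodge-complete variants inherit descent from the underlying diagrams. Case (e) for unfiltered $\dRhat$ follows from (f) by taking $\colim$ across the Hodge filtration, which in the complete case gives back the total object after applying $\gr^\star$; similarly, (c) follows from (d). Alternatively, both (c)--(d) and (e)--(f) can be derived from (a)--(b) by Theorem~\ref{thm:hkr_main} (in the infinitesimal case) and its de Rham counterpart, which express the Hodge filtrations on infinitesimal/de Rham cohomology in terms of Hochschild-theoretic data.

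The main obstacle is the descent verification at the heart of case (a): showing that applying $\HH(-/A)$ (with varying target) to a canonical-cover \v{C}ech nerve yields a limit diagram. The symmetric monoidality theorem identifies the cosimplicial diagram one must take the limit of, but it does not in itself say that this limit is $\HH(R/A)$. The cleanest way I see to dispatch this is to invoke a general principle of the form ``canonical covers are preserved by any symmetric monoidal left adjoint that commutes with the relevant tensors'', applied to the $S^1$-tensor operation. This is presumably the content of the forthcoming Iwasa--Krause--Antieau framework referenced in the text, and in the final proof one would cite this framework rather than reproving it. Additional care is needed at the completion step, where one must ensure that $\widehat{\otimes}$ commutes with the descent limits appearing in the Amitsur complexes; this works because the filtrations in play are bounded-below and complete, so $\widehat{\otimes}$ can be computed as a limit, which then commutes with the outer descent limit.
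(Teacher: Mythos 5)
Your overall template---use symmetric monoidality/base change to identify the image under $\HH(-/-)$ of the \v{C}ech nerve of a cover $(R/A)\to(S/B)$ with the Amitsur (cobar) complex of $\HH(R/A)\to\HH(S/B)$, and then reduce descent to showing that this cosimplicial diagram is a limit diagram---is exactly the skeleton of the paper's argument. The gap is that you stop at precisely the step the paper actually proves. You propose to dispatch it via a general principle that ``canonical covers are preserved by symmetric monoidal left adjoints commuting with the relevant tensors,'' applied to the $S^1$-copower, and then to cite the forthcoming framework for that principle. As stated the principle is not obviously true: $\HH(R/A)\we R\otimes_{R\otimes_AR}R$ is a pushout, and stability of canonical covers under base change does not give a cover between pushouts formed along \emph{different} maps; nor is this the input the paper takes from the forthcoming work. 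The paper's sketch argues concretely: for a connective pair the augmentation $\HH(R/A)\to R$ has $1$-connective fiber and is therefore a canonical cover; composing with the cover $R\to S$ shows $\HH(R/A)\to S$ is a canonical cover, and since this map factors through $\HH(S/B)$, the factorization property of effective epimorphisms (\cite[Cor.~6.2.3.12]{htt}, read in the opposite category) shows that $\HH(R/A)\to\HH(S/B)$ is itself a canonical cover; symmetric monoidality then yields (a). This is also exactly where the connectivity hypothesis on pairs enters, which your proposal never uses.

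Two secondary issues. For (b) you test descent on associated graded pieces and call canonical descent for $\Lambda^i\L_{-/-}$ ``a standard fact''; it is not---canonical covers need not be flat (e.g.\ any map with $1$-connective fiber), and in the paper this descent for exterior and symmetric powers of the cotangent complex is a \emph{corollary} deduced from Theorem~\ref{thm:descent}, so your reduction is circular as written. For (c)--(f), appealing to the adjunctions of Proposition~\ref{prop:gmraksit} to ``intertwine limit diagrams'' does not work as stated, since the de Rham and infinitesimal functors are left adjoints and do not preserve limits; these cases need either an argument in the style of (a) or an application of limit-preserving functors (for instance $\F^\star\gr^*(-)[-2*]$ applied to (b) in the crystalline case), not just ``limits commute with limits.'' The passages (f)$\Rightarrow$(e) and (d)$\Rightarrow$(c) by applying the limit-preserving underlying-object functor are fine.
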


\begin{proof}[Sketch of proof]
    Here is a sketch in the case of $\HH$ and a single morphism $(R/A)\rightarrow(S/B)$
    defining a canonical cover. Consider the commutative square
    $$\xymatrix{
        \HH(R/A)\ar[r]\ar[d]&\HH(S/B)\ar[d]\\
        A\ar[r]&B.
    }$$
    In general, for connective pairs $(R/A)$, the map $\HH(R/A)\rightarrow R$ is a canonical cover
    since it has $1$-connective fiber. Thus, the composition $\HH(R/A)\rightarrow B$ is a canonical
    cover. It follows that $\HH(R/A)\rightarrow \HH(S/B)$ is a canonical cover by a standard fact
    about factorizations of effective epimorphisms in $\infty$-topoi; see~\cite[Cor.~6.2.3.12]{htt}.
    Now, symmetric monoidality of $\HH$ implies (a).
\end{proof}

\begin{example}\label{ex:diagonal}
    Consider the \v{C}ech complex of the map of pairs $(R/A)\rightarrow(R/R)$. Then, we find that
    $$\Infhat_{R/A}\we\Tot\left(\Infhat_{R/R}\stack{3}\Infhat_{R/R\otimes_AR}\stack{5}\cdots\right),$$
    which is familiar from de Rham cohomology in characteristic $0$.
\end{example}

\begin{example}[\v{C}ech--Alexander complexes]
    If $A\rightarrow R$ is a map of commutative rings and $S$ is a smooth commutative $A$-algebra
    with a map $S\rightarrow R$, then the \v{C}ech complex of the map $(R/A)\rightarrow(R/S)$
    gives rise to an equivalence
    $$\dRhat_{R/A}\we\Tot\left(\dRhat_{R/S}\stack{3}\dRhat_{R/S\otimes_AS}\stack{5}\cdots\right).$$
    The resulting cosimplicial objects are called \v{C}ech--Alexander complexes and
    Theorem~\ref{thm:descent} explains why they are such effective methods for computation in de
    Rham cohomology and related invariants.
\end{example}

\begin{corollary}
    Let $(R/A)\rightarrow(S/B)$ be a canonical cover of connective derived commutative rings with
    \v{C}ech complex $(S^\bullet/B^\bullet)$. If
    $M$ is a bounded below $A$-module, then the natural maps
    \begin{align*}
        M\otimes_A\Lambda^i\L_{R/A}&\rightarrow\Tot(M\otimes_A\Lambda^i\L_{S^\bullet/B^\bullet})\\
        M\otimes_A\LSym^i(\L_{R/A}[-1])&\rightarrow\Tot(M\otimes_A\LSym^i_{S^\bullet}(\Lambda^i\L_{S^\bullet/B^\bullet}[-1]))
    \end{align*}
    are equivalences.
\end{corollary}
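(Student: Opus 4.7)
The plan is to deduce both statements from Theorem~\ref{thm:descent} by passing to associated graded pieces of the Hodge filtrations and then tensoring with $M$.

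First I would apply Theorem~\ref{thm:descent}(f) to the canonical cover $(R/A)\rightarrow(S/B)$, which yields a descent equivalence
$$\F^\star_\H\dRhat_{R/A}\we\Tot\left(\F^\star_\H\dRhat_{S^\bullet/B^\bullet}\right)$$
in $\widehat{\FDAlg}^\crys_\bZ$. Passing to the $i$-th associated graded piece (which commutes with limits in a stable $\infty$-category, being a shifted fiber) and using the identification $\gr^i_\H\dRhat_{-/-}\we\Lambda^i\L_{-/-}[-i]$, I obtain the equivalence $\Lambda^i\L_{R/A}\we\Tot(\Lambda^i\L_{S^\bullet/B^\bullet})$ of $A$-modules. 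The same argument applied to Theorem~\ref{thm:descent}(d), using $\gr^i_\H\Infhat_{-/-}\we\LSym^i(\L_{-/-}[-1])$, gives the analogous equivalence for symmetric powers.

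It remains to extend these descent equivalences by tensoring with a bounded below $A$-module $M$. The hard part is that $M\otimes_A(-)$ does not commute with totalizations in general. The key input is that for a canonical cover the cosimplicial object $S^\bullet/B^\bullet$ (together with its Hodge-filtered derived de Rham or infinitesimal cohomology) has increasing connectivity in the cosimplicial degree, so that the Tot-tower for $\Lambda^i\L_{S^\bullet/B^\bullet}$ and $\LSym^i(\L_{S^\bullet/B^\bullet}[-1])$ is essentially constant in each fixed homological degree. Concretely, I would bound the connectivity of the fiber of $\Lambda^i\L_{R/A}\rightarrow\Tot_n(\Lambda^i\L_{S^\bullet/B^\bullet})$ in terms of $n$, using that a canonical cover is an effective epimorphism and that the associated \v{C}ech complex has fibers of connectivity growing with cosimplicial degree; this can be extracted from the standard analysis of effective epimorphisms in an $\infty$-topos as in~\cite[Cor.~6.2.3.12]{htt}. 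Tensoring with a bounded below $M$ then preserves this growing connectivity (with a uniform shift depending only on the lower bound of $M$), so the natural map $M\otimes_A\Lambda^i\L_{R/A}\rightarrow\Tot(M\otimes_A\Lambda^i\L_{S^\bullet/B^\bullet})$ has arbitrarily connective fiber and is therefore an equivalence. The same convergence argument, applied to the (slightly more delicate) divided-power/symmetric filtration on $\LSym^i(\L[-1])$, yields the symmetric-power statement.

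The main obstacle is the third step: verifying the quantitative connectivity estimate that justifies commuting $M\otimes_A(-)$ past the totalization. I would expect this to reduce to the standard fact that if $R\to S^\bullet$ is the \v{C}ech nerve of a map with $n$-connective fiber, then the cofiber of $R\to\Tot_{\leq m}(S^\bullet)$ is roughly $(n+m)$-connective, combined with the fact that cotangent and Hodge-filtered constructions do not decrease connectivity for connective inputs.
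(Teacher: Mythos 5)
Your first two steps are fine: applying Theorem~\ref{thm:descent}(f) (resp.\ (d)) and passing to $\gr^i$, which commutes with the totalization, does give the case $M=A$ of both statements. The gap is in the third step. A canonical cover in this paper is \emph{defined} only by the requirement that the \v{C}ech diagram be a limit diagram universally; no connectivity of the fiber of $R\rightarrow S$ is imposed. The $1$-connective-fiber argument in the sketch of Theorem~\ref{thm:descent} is a \emph{sufficient} criterion for producing covers, not a property of all of them: faithfully flat covers of discrete rings are canonical covers with no connectivity to exploit, and in the \v{C}ech--Alexander example the top ring map is the identity $R\rightarrow R$, so the cosimplicial object $\Lambda^i\L_{S^\bullet/B^\bullet}$ is controlled entirely by the varying bases $B^\bullet$ and its Tot-tower is not pro-constant in each degree. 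In these cases the totalization converges for descent-theoretic reasons, not because the partial totalizations $\Tot_{\leq m}$ approximate the limit with growing connectivity, so the estimate you invoke to commute $M\otimes_A(-)$ past $\Tot$ is simply unavailable, and the argument breaks exactly at the step you flagged as the main obstacle.

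The paper avoids this issue by converting ``tensor with $M$'' into a base change of the cover itself: one base changes along the trivial square-zero extension $A\rightarrow A\oplus M$ (in $\widehat{\FDAlg}^\crys_A$), uses that canonical covers are preserved under base change --- which is part of their definition --- and applies Theorem~\ref{thm:descent} to the base-changed cover. Combined with the base change/collapse formulas of Section~\ref{sec:sym}, this yields
$M\widehat{\otimes}_A\F^\star_\H\dRhat_{R/A}\we\Tot\bigl(M\widehat{\otimes}_A\F^\star_\H\dRhat_{S^\bullet/B^\bullet}\bigr)$
as the $M$-summand of the descent equivalence for the base-changed pair, and taking associated graded pieces gives the first statement; the infinitesimal theory gives the second. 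If you want to salvage your route, you would need to replace the connectivity estimate by this kind of structural argument; no purely quantitative Tot-tower bound will work for arbitrary canonical covers.
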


\begin{proof}
    One can use base change along the map $A\rightarrow A\oplus M$ in $\widehat{\FDAlg}_A^\crys$ and the preservation of
    canonical covers under base change to see that
    $M\widehat{\otimes}_A\F^\star_\H\dRhat_{R/A}\we\Tot(M\widehat{\otimes}_A\F^\star_\H\dRhat_{S^\bullet/B^\bullet})$.
    Taking associated graded pieces establishes the first equivalence.
    The second follows by the same reasoning using infinitesimal cohomology.
\end{proof}

\section{Completions and derived infinitesimal cohomology}\label{sec:completions}

We show that the results of~\cite{bhatt-completions} extend to infinitesimal cohomology over an
arbitrary base. The questions we ask are ``when is the Hodge filtration complete'' and ``what is
the completion''. The strongest results use the Gauss--Manin connection in a crucial way.

Let $k\rightarrow R$ be a map of derived commutative rings.
We have observed already in the Poincar\'e lemma of~\cite[Prop.~9.6]{antieau_crystallization} that the natural map $k\rightarrow\Inf_{R/k}$ is an equivalence.
Now, we discuss the problem of showing that the Hodge filtration $\F^\star_\H\Inf_{R/k}$ is
complete so that $k\we\Inf_{R/k}\we\Infhat_{R/k}$. This is not always the case. For
example, $\Infhat_{\bQ[x^{\pm 1}]/\bQ}\we\dRhat_{\bQ[x^{\pm 1}]/\bQ}$ is not equivalent to $\bQ$ as
it contains a nonzero class $\tfrac{\d x}{x}$ in cohomological degree $1$.

\begin{lemma}\label{lem:connective_complete}
    If $k\rightarrow R$ is a map of connective derived commutative rings whose fiber $I$ is $1$-connective, then
    $\F^s_\H\Inf_{R/k}$ is $s$-connective for all $s\geq 0$. In particular, the
    Hodge filtration $\F^\star_\H\Inf_{R/k}$ is complete.
\end{lemma}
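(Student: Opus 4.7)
The plan is to show that each filtration piece $\F^s_\H\Inf_{R/k}$ is $s$-connective, from which completeness is an immediate consequence: a decreasing filtration whose pieces become arbitrarily highly connective has vanishing limit, so $\lim_s\F^s_\H\Inf_{R/k}=0$.

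First I would unpack what the hypothesis on the fiber $I$ gives. Since $I$ is $1$-connective, the map $k\to R$ is an isomorphism on $\pi_0$ and surjective on $\pi_1$. By the standard comparison between the connectivity of the fiber of a map of connective derived commutative rings and the connectivity of the relative cotangent complex, $\L_{R/k}$ is $1$-connective, so $\L_{R/k}[-1]$ is $2$-connective.

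Next I would invoke the identification $\gr^s_\H\Inf_{R/k}\simeq\LSym^s_R(\L_{R/k}[-1])$ from the prequel. Since $\LSym^s$ is computed as $\Sigma_s$-coinvariants of the $s$-fold tensor power, and tensor products of connective modules accumulate connectivity additively while taking orbits preserves connectivity, each associated graded piece $\gr^s_\H\Inf_{R/k}$ is in fact $2s$-connective for $s\geq 1$. So we have significant slack: we only need $s$-connectivity of $\F^s$ but the graded pieces deliver $2s$-connectivity.

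The core step is then to translate graded connectivity into filtered connectivity. The case $s=0$ is the Poincar\'e lemma $\F^0_\H\Inf_{R/k}\simeq k$, and $s=1$ is the hypothesis since $\F^1_\H\Inf_{R/k}\simeq\fib(k\to R)=I$. For $s\geq 2$ I would combine the fiber sequences $\F^{s+1}_\H\Inf_{R/k}\to\F^s_\H\Inf_{R/k}\to\gr^s_\H\Inf_{R/k}$ with the multiplicative structure of the Hodge filtration. The iterated product $\F^1\otimes\cdots\otimes\F^1\to\F^s$ exhibits $\F^s$ as generated by $s$-fold products of elements in the $1$-connective module $\F^1$, so connectivity-wise $\F^s$ should behave like $\LSym^s$ of a $1$-connective module and hence be $s$-connective. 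Completeness then follows at once from these connectivity bounds since $\lim_s\F^s_\H\Inf_{R/k}$ is $\infty$-connective.

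The main obstacle is making this inductive step rigorous. A naive long exact sequence argument applied to $\F^{s+1}\to\F^s\to\gr^s$ loses one degree of connectivity per step, since the fiber of a map between $s$-connective objects is only $(s-1)$-connective in general, which is insufficient. Overcoming this requires exploiting either the sharper $2s$-connectivity of the gradeds carefully through the long exact sequence or, more cleanly, using the derived-algebraic universal property of $\F^\star_\H\Inf_{R/k}$ from the prequel to present it as a free object built multiplicatively from the $1$-connective module $\F^1$, from which the $s$-connectivity of each $\F^s$ can be read off directly.
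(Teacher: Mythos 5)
Your reduction of completeness to the $s$-connectivity of $\F^s_\H\Inf_{R/k}$, and the identifications $\F^0_\H\Inf_{R/k}\simeq k$ (Poincar\'e lemma) and $\F^1_\H\Inf_{R/k}\simeq I$, are correct and agree with the paper. But the heart of the lemma---proving $s$-connectivity of $\F^s$ for all $s$---is exactly what your proposal does not contain. The route through the associated graded pieces cannot work as stated: knowing that each $\gr^s_\H\Inf_{R/k}\simeq\LSym^s_R(\L_{R/k}[-1])$ is highly connective controls the finite quotients $\F^s/\F^t$, but it says nothing about $\F^s$ itself unless the filtration is already known to be complete, which is precisely the statement being proved; the induction along the fiber sequences $\F^{s+1}\to\F^s\to\gr^s$ runs in the wrong direction and has no base case. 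Your connectivity bookkeeping is also off: a $1$-connective fiber gives a $2$-connective $\L_{R/k}$, hence a $1$-connective $\L_{R/k}[-1]$ and only $s$-connective (not $2s$-connective) graded pieces, so the ``slack'' you hope to push through the long exact sequences is not there. Finally, the suggestion to ``present $\F^\star_\H\Inf_{R/k}$ as a free object built multiplicatively from the $1$-connective module $\F^1$'' is unjustified: for general $R$ with $1$-connective fiber this filtered ring is not free, and the assertion that $\F^s$ is generated by $s$-fold products out of $\F^1$ in a connectivity-controlling way is of the same depth as the lemma itself.

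What is missing is the mechanism the paper uses to make such a statement precise. One builds $R$ from $k$ as a derived commutative algebra by attaching cells in dimensions $\geq 2$ (possible because $I$ is $1$-connective), notes that $s$-connective objects are closed under colimits so that it suffices to treat a single cell attachment $T=k\otimes_{\LSym_k(I)}k$, and then base changes along $k\to R$ (which detects connectivity here because $\pi_0k\to\pi_0R$ is an isomorphism) to make the attaching map nullhomotopic, so that $T\otimes_kR\simeq\LSym_R(M)$ with $M=I[1]\otimes_kR$ a $2$-connective module. For this genuinely free algebra one computes $\F^\star\Inf$ explicitly: it is $\LSym^\inf_R(\fib(\ins^0M\to\ins^1M))$, each weight piece of which carries a finite filtration with graded pieces $\LSym^a(M[-1])\otimes\LSym^b(M)$ concentrated in filtration weights $\leq a$ and $a$-connective because $M[-1]$ is $1$-connective; hence $\F^s$ is $s$-connective. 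Without some such reduction to a free case (or an equivalent structural input), the connectivity of the filtration pieces does not follow from the facts you have assembled.
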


\begin{proof}
    The second statement follows from the first.
    We can build $R$ as a derived commutative $k$-algebra starting with $k$ by attaching only cells of dimension $2$ and higher.
    Since $s$-connective objects are closed under colimits, it will suffice to prove the result at
    each cell. The only difficult case is the first one. Let $T$ be the pushout
    $$\xymatrix{
        \LSym_k(I)\ar[r]\ar[d]&k\ar[d]\\
        k\ar[r]&T,
    }$$
    where one map is induced by the inclusion of $I$ as the fiber of $k\rightarrow R$ and the other
    is by the zero map. It will suffice to show that $\F^s\Inf_{T/k}$ is $s$-connective. We can
    test connectivity after base change along $k\rightarrow R$ since $I$ is $1$-connective. Thus,
    consider $$\xymatrix{\LSym_R(I\otimes_k R)\ar[r]\ar[d]&R\ar[d]\\R\ar[r]&T\otimes_kR}.$$ The map
    $I\otimes_kR\rightarrow R$ is now nullhomotopic (since $R\rightarrow R\otimes_kR$ has a
    retraction). Thus, we see that $T\otimes_kR$ is identified with $\LSym_R(I[1]\otimes_kR)$. This
    is a free algebra on a $2$-connective object. Let $T'=T\otimes_kR$ and $M=I[1]\otimes_kR$.
    Then, we have that $\F^\star\Inf_{T'/R}$
    is $\LSym^\inf_R(\fib(\ins^0 M\rightarrow\ins^1 M))$. Each $\LSym^s(\fib(\ins^0
    M\rightarrow\ins^1 M))$ has a finite filtration with associated graded pieces given by
    $\LSym^a(\ins^1 M[-1])\otimes\LSym^b(\ins^0 M)$ for $a+b=s$. Taking  $\F^r$ of this term yields
    zero if $r>a$ and otherwise yields $\LSym^a(M[-1])\otimes\LSym^b(M)$ for $r\leq a$. This is
    $a$-connective since $M[-1]$ is $1$-connective. This completes the proof for the first cell
    attachment and the later ones are similar.
\end{proof}

\begin{construction}[Completions]\label{const:completions}
    There are several notions of completion in higher algebra.
    See~\cite{carlsson,DwGr,greenlees-may,MNN17} and~\cite[Chap.~7]{sag}.
    Let $k\rightarrow R$ be a map of $\bE_\infty$-rings with \v{C}ech complex $R^\bullet$. Let
    $k_R^\wedge=\Tot(R^\bullet)$, viewed as an $\bE_\infty$-ring. If $k\rightarrow R$ is a map of
    derived commutative rings, then we can and do view $k_R^\wedge$ as a derived commutative ring.
    We say that $k$ is $R$-complete if the natural map $k\rightarrow k_R^\wedge$ is an equivalence.
    Similarly, if $M$ is a $k$-module, let $M_R^\wedge=\Tot(M\otimes_kR^\bullet)$ and we say that
    $M$ is $R$-complete if $M\rightarrow M_R^\wedge$ is an equivalence.

    On the other hand, we can consider the class $S$ of morphisms $M\rightarrow N$ in $\Mod_k$ such
    that $R\otimes_kM\rightarrow R\otimes_kN$ is an equivalence. Then, the localization
    $\Mod_k[S^{-1}]$ exists and the localization functor $\Mod_k\rightarrow\Mod_k[S^{-1}]$ admits a
    fully faithful right adjoint. We identify $\Mod_k[S^{-1}]$ with its essential image
    $(\Mod_k)_R^\wedge$ in $\Mod_k$ and call the $k$-modules in the image categorically $R$-complete.
    See~\cite[Def.~2.19]{MNN17}.

    An $R$-complete module $M$ is automatically categorically $R$-complete since (i) $R$-modules
    are categorically $R$-complete and (ii) the categorically $R$-complete
    $k$-modules are closed under limits in $\Mod_k$.
    When $R$ is dualizable as a $k$-module, then $M\in\Mod_k$ is $R$-complete if and only if it is
    categorically $R$-complete~\cite[Prop.~2.21]{MNN17} and the categorical completion functor
    $\Mod_k\rightarrow(\Mod_k)_R^\wedge$ is given by $M\mapsto M_R^\wedge$.
\end{construction}

\begin{lemma}\label{lem:adams_completion}
    Suppose that $k\rightarrow R$ is a map of connective derived commutative rings inducing a surjection
    $\pi_0$.
    \begin{enumerate}
        \item[{\em (a)}] There is a natural equivalence $k_R^\wedge\we\Infhat_{R/k}$.
        \item[{\em (b)}] If $M$ is a bounded below $k$-module, then the natural map
            $M_R^\wedge\rightarrow\Infhat_{R/k}\widehat{\otimes}_kM$ is an equivalence.
    \end{enumerate}
\end{lemma}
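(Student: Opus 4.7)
The plan is to apply the descent statement of Theorem~\ref{thm:descent} to a strategically chosen canonical cover and then identify each cosimplicial level using the Poincaré lemma and Lemma~\ref{lem:connective_complete}. Since $\pi_0 k\to\pi_0 R$ is surjective by hypothesis, $k\to R$ is a canonical cover, and so the morphism of pairs $(R/k)\to(R/R)$ in $\Pairs(\DAlg^\cn_\bZ)$ is a canonical cover whose \v{C}ech complex is $(R/R^{\otimes_k\bullet+1})$, with the top term constant at $R$ and the bottom term the \v{C}ech complex of $k\to R$.

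For part (a), I would invoke Theorem~\ref{thm:descent}(c) to obtain $\Infhat_{R/k}\we\Tot(\Infhat_{R/R^{\otimes_k\bullet+1}})$ and then identify each cosimplicial level. For $n\geq 1$, surjectivity on $\pi_0$ implies $\pi_0(R^{\otimes_k n+1})\xto{\sim}\pi_0 R$, so the fiber of the multiplication $R^{\otimes_k n+1}\to R$ is $1$-connective; Lemma~\ref{lem:connective_complete} then ensures Hodge completeness, and the Poincaré lemma applied with base $R^{\otimes_k n+1}$ gives $R^{\otimes_k n+1}\we\Inf_{R/R^{\otimes_k n+1}}\we\Infhat_{R/R^{\otimes_k n+1}}$. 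The case $n=0$ is trivial. The totalization therefore unwinds to $\Tot(R^{\otimes_k\bullet+1})=k_R^\wedge$, proving (a).

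For part (b), the plan is to use the filtered, completed version of descent that is established in the course of the proof of the corollary following Theorem~\ref{thm:descent}, which yields $\F^\star_\H\Infhat_{R/k}\widehat{\otimes}_kM\we\Tot(\F^\star_\H\Infhat_{R/R^{\otimes_k\bullet+1}}\widehat{\otimes}_kM)$ for bounded below $M$. Taking underlying objects and substituting the identifications from part (a) collapses the right-hand side to $\Tot(R^{\otimes_k\bullet+1}\otimes_kM)=M_R^\wedge$.

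The main technical point I expect to require care is verifying that the underlying $k$-module of $\F^\star_\H\Infhat_{R/R^{\otimes_k n+1}}\widehat{\otimes}_kM$ is $R^{\otimes_k n+1}\otimes_kM$ after completion. This should follow from the increasing connectivity of $\F^s_\H\Infhat_{R/R^{\otimes_k n+1}}$ in $s$ supplied by Lemma~\ref{lem:connective_complete}, combined with the bounded-below hypothesis on $M$: together they force the filtration $\F^s\otimes_kM$ to remain complete with vanishing limit, so that the underlying object of the completion is unchanged. This is precisely where the bounded-below assumption on $M$ is used in an essential way.
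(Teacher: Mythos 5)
Your proposal follows the paper's own proof essentially verbatim: descent (Theorem~\ref{thm:descent}) along the \v{C}ech complex of $(R/k)\rightarrow(R/R)$, which is the augmentation $R^\bullet\rightarrow R$ of the \v{C}ech complex of $k\rightarrow R$, level-wise identification $\Infhat_{R/R^{\otimes_k(n+1)}}\we R^{\otimes_k(n+1)}$ via Lemma~\ref{lem:connective_complete} and the Poincar\'e lemma, and the bounded-belowness of $M$ together with the increasing connectivity of the Hodge filtration to strip off the completion in part (b). One small repair: an isomorphism on $\pi_0$ alone does not make the fiber of $R^{\otimes_k(n+1)}\rightarrow R$ $1$-connective --- you also need surjectivity on $\pi_1$, which holds because the multiplication map is split, exactly as in the paper's argument.
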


\begin{proof}
    Consider the \v{C}ech complex $R^\bullet$ of $k\rightarrow R$. There is a natural augmentation
    $R^\bullet\rightarrow R$ given by multiplication. The induced map
    $M\widehat{\otimes}_k\Infhat_{R/k}\rightarrow\Tot(M\widehat{\otimes}_k\Infhat_{R/R^\bullet})$ is an
    equivalence by Theorem~\ref{thm:descent}.
    Since $\pi_0k\rightarrow\pi_0R$ is surjective, each $R^n\rightarrow R$ is an isomorphism on $\pi_0$
    and as the multiplication maps are split, it is a surjection on $\pi_1$. Thus, each
    map $R^n\rightarrow R$ satisfies the hypotheses of Lemma~\ref{lem:connective_complete}, so
    $M\widehat{\otimes}_k\Infhat_{R/R^\bullet}\we (M\otimes_kR^\bullet)$. Thus,
    $M\widehat{\otimes}_k\Infhat_{R/k}\we\Tot(M\widehat{\otimes}_k\Infhat_{R/R^\bullet})\we\Tot(M\otimes_k R^\bullet)\we M_R^\wedge$.
\end{proof}

\begin{example}
    Suppose that $k\rightarrow R$ is a map of connective derived commutative rings such that
    $\pi_0k\rightarrow\pi_0R$ is an isomorphism. Then, $k\we k_R^\wedge\we\Infhat_{R/k}$. The first
    equivalence follows from the fact that under the hypothesis $k\rightarrow R$ is a canonical
    cover as in the sketch of the proof of Theorem~\ref{thm:descent}. The second equivalence
    follows from Lemma~\ref{lem:adams_completion}(a).
\end{example}

Now, we can prove a strong form of nilinvariance, or Kashiwara's Lemma.

\begin{theorem}[Canonical invariance]\label{thm:nilinvariance}
    Let $k\rightarrow R\rightarrow S$ be maps of connective derived commutative rings.
    If $R\rightarrow S$ is a canonical cover which induces a surjection on $\pi_0$,
    then $\Infhat_{R/k}\we\Infhat_{S/k}$.
\end{theorem}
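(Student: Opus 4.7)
The plan is to apply the Gauss--Manin filtration from Theorem~\ref{thm:main}(b) to produce a complete filtration on $\Infhat_{S/k}$ whose graded pieces match those of the Hodge filtration on $\Infhat_{R/k}$, and then conclude by comparing filtrations. The Gauss--Manin connection plays a crucial role by allowing us to ``transfer'' the canonical-cover simplification at the level of $\Infhat_{S/R}$ (from Lemma~\ref{lem:adams_completion}) into a statement about $\Infhat_{S/k}$.

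First, I invoke Theorem~\ref{thm:main}(b) for $k\to R\to S$ to obtain the complete multiplicative bifiltration $\F^\star_\GM\F^\star_\H\Infhat_{S/k}$ on $\F^\star_\H\Infhat_{S/k}$ with graded pieces $\F^\star_\H\Infhat_{S/R}\widehat{\otimes}_R\ins^i\LSym^i(\L_{R/k}[-1])$. Evaluating $\F^0$ of the Hodge direction yields a complete filtration $\F^\star_\GM\Infhat_{S/k}$ on $\Infhat_{S/k}$ whose $i$th graded piece is $\F^0(\F^\star_\H\Infhat_{S/R}\widehat{\otimes}_R\ins^i\LSym^i(\L_{R/k}[-1]))$.

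Next, I simplify these graded pieces using the canonical cover hypothesis. Since $R\to S$ is a canonical cover inducing a surjection on $\pi_0$, Lemma~\ref{lem:adams_completion}(a) identifies $\Infhat_{S/R}\we R_S^\wedge$, while the canonical cover condition gives $R\we R_S^\wedge$, so $\Infhat_{S/R}\we R$. More generally, Lemma~\ref{lem:adams_completion}(b) gives $\Infhat_{S/R}\widehat{\otimes}_R M\we M_S^\wedge$ for any bounded below $R$-module $M$; combining this with descent of bounded below modules along the canonical cover $R\to S$ yields $\Infhat_{S/R}\widehat{\otimes}_R M\we M$. Applying this to the bounded below $R$-module $M=\LSym^i(\L_{R/k}[-1])$ (and tracking the weight-$i$ insertion through the colimit computing $\F^0$ of the completed tensor) gives
$$\gr^i_\GM\Infhat_{S/k}\we\LSym^i(\L_{R/k}[-1])\we\gr^i_\H\Infhat_{R/k}.$$

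Finally, functoriality of infinitesimal cohomology supplies a natural map $\Infhat_{R/k}\to\Infhat_{S/k}$ which, by the universal property of the bifiltered horizontal construction of Section~\ref{sec:filtered_horizontal}, upgrades to a map of complete filtered objects $\F^\star_\H\Infhat_{R/k}\to\F^\star_\GM\Infhat_{S/k}$. The induced map on graded pieces is the identity on $\LSym^i(\L_{R/k}[-1])$, so completeness of both filtrations forces $\Infhat_{R/k}\we\Infhat_{S/k}$. The main obstacle I anticipate is the reduction $M_S^\wedge\we M$ for bounded below $R$-modules $M$, which amounts to effective descent for bounded below modules along the canonical cover $R\to S$; this should be extractable from the descent framework of Section~\ref{sec:descent} and the forthcoming joint work with Iwasa and Krause. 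A secondary subtlety is checking that the functorial augmentation really does respect the Gauss--Manin filtration and is the identity on graded pieces, which I would verify by tracing through the universal property giving $\F^\star_\H\dR_{(-)/k}$ and its infinitesimal analogue in Proposition~\ref{prop:gmraksit}.
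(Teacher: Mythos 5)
Your proposal is correct and follows essentially the same route as the paper: take the Gauss--Manin filtration on $\Infhat_{S/k}$ from Theorem~\ref{thm:main}(b), identify its graded pieces with the $S$-completions of $\LSym^i(\L_{R/k}[-1])$ via Lemma~\ref{lem:adams_completion}, and use bounded-belowness together with the canonical cover hypothesis to remove the completion, so that the filtration agrees with the Hodge filtration on $\Infhat_{R/k}$. Your extra step of constructing the comparison map $\Infhat_{R/k}\rightarrow\Infhat_{S/k}$ and checking it on graded pieces is just a careful spelling-out of the paper's concluding sentence, and the ``obstacle'' you flag (descent of bounded below modules along a canonical cover) is exactly the point the paper also invokes from the definition of canonical covers.
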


\begin{proof}
    Consider the Gauss--Manin filtration on $\Infhat_{S/k}$. This is a complete filtration with
    associated graded pieces given by $$\Infhat_{S/R}\widehat{\otimes}_R\LSym^i(\L_{R/k}[-1]).$$
    Since $\pi_0R\rightarrow\pi_0S$ is surjective, we have that
    $\Infhat_{S/R}\widehat{\otimes}_R\LSym^i(\L_{R/k}[-1])$ is equivalent to the $S$-completion of
    $\LSym^i(\L_{R/k}[-1])$ by Lemma~\ref{lem:adams_completion}. However, as $\LSym^i(\L_{R/k}[-1])$ is bounded below and $R\rightarrow
    S$ is a canonical cover, the natural map
    $\LSym^i(\L_{R/k}[-1])\rightarrow(\LSym^i(\L_{R/k}[-1]))_S^\wedge$ is an equivalence. It
    follows that the Gauss--Manin filtration on $\Infhat_{S/k}$ agrees with the Hodge filtration on
    $\Infhat_{R/k}$.
\end{proof}

\begin{corollary}
    Let $k$ be a connective derived commutative ring and let $I\subseteq\pi_0k$ be an ideal
    generated by elements $x_1,\ldots,x_n$. Let $R$ be the Koszul complex of $(x_1,\ldots,x_n)$ and
    let $S$ denote the commutative ring $(\pi_0k)/I$. Then, $k_R^\wedge\we k_S^\wedge$.
\end{corollary}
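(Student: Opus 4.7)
The plan is to deduce this as a direct consequence of nilinvariance (Theorem~\ref{thm:nilinvariance}) combined with the identification of Adams-type completion with $\Infhat$ from Lemma~\ref{lem:adams_completion}(a). The point is that although $R$ (the Koszul complex) and $S = (\pi_0 k)/I$ look very different as derived rings, they share the same $\pi_0$, so morally $R$ is a ``thickening'' of $S$ in the sense that infinitesimal cohomology cannot tell them apart.

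First I would observe that $\pi_0 R \iso (\pi_0 k)/I = S$; this is the standard computation of $\pi_0$ of a Koszul complex (the top homology of $k \otimes^L_{k[x_1,\ldots,x_n]} k$ in the convention where the $x_i$ act by zero on one factor). Consequently the natural truncation map $R \to S$ may be identified with $R \to \pi_0 R$, whose fiber $\tau_{\geq 1} R$ is $1$-connective. By the same argument used in the proof sketch of Theorem~\ref{thm:descent} (that maps with $1$-connective fiber are canonical covers), the map $R \to S$ is a canonical cover, and it is an isomorphism on $\pi_0$ so in particular surjective on $\pi_0$.

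Next I would apply Theorem~\ref{thm:nilinvariance} to the composition $k \to R \to S$ to obtain an equivalence
\[
    \Infhat_{R/k} \we \Infhat_{S/k}.
\]
Finally, both $k \to R$ and $k \to S$ are surjective on $\pi_0$ (each realizes the quotient by $I$ at the level of $\pi_0$), so Lemma~\ref{lem:adams_completion}(a) gives natural equivalences $k_R^\wedge \we \Infhat_{R/k}$ and $k_S^\wedge \we \Infhat_{S/k}$. Stringing these together produces the desired equivalence $k_R^\wedge \we k_S^\wedge$.

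There is essentially no hard step here once the earlier machinery is in place; the whole content is bookkeeping. The only point requiring a moment's care is the verification that $R \to S$ qualifies as a canonical cover satisfying the hypotheses of Theorem~\ref{thm:nilinvariance}, but this is immediate from the $1$-connectivity of $\tau_{\geq 1} R$ and the fact that canonical covers are closed under base change. Everything else is a direct invocation of the results already established.
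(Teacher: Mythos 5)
Your argument is correct and is essentially the paper's own proof: the paper deduces the corollary from Theorem~\ref{thm:nilinvariance} applied to the canonical cover $R\rightarrow S$ (a map with $1$-connective fiber, as in the sketch of Theorem~\ref{thm:descent}), with the identifications $k_R^\wedge\we\Infhat_{R/k}$ and $k_S^\wedge\we\Infhat_{S/k}$ from Lemma~\ref{lem:adams_completion}(a) left implicit. You have simply spelled out those bookkeeping steps explicitly.
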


\begin{proof}
    This follows from Theorem~\ref{thm:nilinvariance} since $R\rightarrow S$ is a canonical cover.
\end{proof}

\begin{corollary}\label{cor:complete_omni}
    Let $k$ be a connective derived commutative ring and let $I\subseteq\pi_0k$ be an ideal
    generated by elements $x_1,\ldots,x_n$. Let $R$ be the Koszul complex of $(x_1,\ldots,x_n)$ and
    let $S$ denote the commutative ring $(\pi_0k)/I$. The following conditions are
    equivalent:
    \begin{enumerate}
        \item[{\em (a)}] $k$ is $R$-complete;
        \item[{\em (b)}] the Hodge filtration $\F^\star_\H\Inf_{R/k}$ is complete;
        \item[{\em (c)}] $k$ is $S$-complete;
        \item[{\em (d)}] the Hodge filtration $\F^\star_\H\Inf_{S/k}$ is complete;
        \item[{\em (e)}] $k$ is categorically $R$-complete.
    \end{enumerate}
\end{corollary}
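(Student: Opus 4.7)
The plan is to chain together three key identifications to collapse conditions (a)--(d), and then separately dispatch (e) using the dualizability of $R$.

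First I would rewrite (b) and (d) in a form comparable to (a) and (c). By the Poincar\'e lemma~\cite[Prop.~9.6]{antieau_crystallization}, $\Inf_{R/k}\we k\we\Inf_{S/k}$. Hence completeness of the Hodge filtration $\F^\star_\H\Inf_{R/k}$ is equivalent to the assertion that the natural map $k\we\Inf_{R/k}\to\Infhat_{R/k}$ is an equivalence, and similarly for $S$. Thus (b) becomes ``$k\we\Infhat_{R/k}$'' and (d) becomes ``$k\we\Infhat_{S/k}$''.

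Next, observe that $\pi_0 R\iso (\pi_0 k)/I\iso\pi_0 S$, so both $k\to R$ and $k\to S$ are surjective on $\pi_0$. Lemma~\ref{lem:adams_completion}(a) therefore gives natural equivalences $k_R^\wedge\we\Infhat_{R/k}$ and $k_S^\wedge\we\Infhat_{S/k}$. Under these identifications, (a) $\Leftrightarrow$ (b) and (c) $\Leftrightarrow$ (d) are immediate. Moreover, since $R$ is the Koszul complex on $(x_1,\ldots,x_n)$, the map $R\to S=(\pi_0k)/I$ is a canonical cover (as used in the preceding corollary), so Theorem~\ref{thm:nilinvariance} yields $\Infhat_{R/k}\we\Infhat_{S/k}$, which glues (b) to (d). This establishes the equivalence of (a), (b), (c), and (d).

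Finally, for (a) $\Leftrightarrow$ (e), I would invoke that the Koszul complex $R$ on a finite tuple of elements of $\pi_0 k$ is perfect, hence dualizable, as a $k$-module. By the discussion in Construction~\ref{const:completions} (specifically~\cite[Prop.~2.21]{MNN17}), for dualizable $R$ the notions of $R$-completeness and categorical $R$-completeness coincide, and the categorical completion functor is computed by the same \v{C}ech totalization $M\mapsto M_R^\wedge$. Thus (a) and (e) are literally the same condition on $k$.

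I do not anticipate a serious obstacle: the hard work has already been done in Theorem~\ref{thm:nilinvariance} (which uses the Gauss--Manin filtration), Lemma~\ref{lem:adams_completion}, and the Poincar\'e lemma. The only mild subtlety is checking that the hypotheses of Lemma~\ref{lem:adams_completion} and Theorem~\ref{thm:nilinvariance} apply on the nose, which reduces to the bookkeeping that $\pi_0 R\iso\pi_0 S\iso(\pi_0 k)/I$ together with dualizability of the Koszul complex.
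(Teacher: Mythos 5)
Your proposal is correct and follows essentially the same route as the paper: translate (b) and (d) via the Poincar\'e lemma and Lemma~\ref{lem:adams_completion} into the statements $k\we k_R^\wedge$ and $k\we k_S^\wedge$, identify these using Theorem~\ref{thm:nilinvariance} applied to the canonical cover $R\rightarrow S$, and handle (e) via dualizability of the Koszul complex and~\cite[Prop.~2.21]{MNN17}. The only difference is presentational: you make explicit the use of the Poincar\'e lemma that the paper leaves implicit from the surrounding discussion.
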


\begin{proof}
    As $\pi_0k\rightarrow\pi_0R$ and $\pi_0k\rightarrow\pi_0S$ are surjective,
    Lemma~\ref{lem:adams_completion} implies that (a) is equivalent to (b) and (c) is equivalent to
    (d). However, by Lemma~\ref{lem:adams_completion} and Theorem~\ref{thm:nilinvariance}, we
    find that $k_R^\wedge\we k_S^\wedge$, so (a) is
    equivalent to (c). Since the Koszul complex $R$ is dualizable as a $k$-module, the techniques
    of~\cite[Prop.~2.21]{MNN17} apply to show that the categorical completion of $k$ is computed by the
    completion; hence (a) is equivalent to (e).
\end{proof}

We have one more ingredient, a comparison between the classical $I$-adic completion and the
completion with respect to $R/I$.

\begin{lemma}\label{lem:classical}
    Suppose that $R$ is a noetherian commutative ring and that $I\subseteq R$ is an ideal. Let
    $S=R/I$. If $M$ is a finitely generated $R$-module, then the classical $I$-adic completion $\lim_n M/I^n$ of $M$ agrees with the
    $S$-completion $M_S^\wedge$.
\end{lemma}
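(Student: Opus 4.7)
The plan is to reduce the statement to the classical commutative-algebra fact that for a noetherian ring and a finitely generated module, derived $I$-adic completion coincides with classical $I$-adic completion.

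First, since $R$ is noetherian, I would choose generators $x_1,\ldots,x_n$ of $I$ and let $K$ denote the Koszul complex on these generators, viewed as a connective derived commutative $R$-algebra with $\pi_0 K \iso S$. Crucially, $K$ is perfect, hence dualizable, as an $R$-module. The first step is to show that $M_S^\wedge \simeq M_K^\wedge$ for every $R$-module $M$. Since $K$ is dualizable, \cite[Prop.~2.21]{MNN17} identifies $M_K^\wedge$ with the categorical $K$-completion of $M$. The argument of Corollary~\ref{cor:complete_omni}, applied with $k = R$, shows that the two localization functors $(-)_K^\wedge$ and $(-)_S^\wedge$ on $\Mod_R$ agree: both invert the same class of morphisms, because the categorically $K$-complete and categorically $S$-complete subcategories of $\Mod_R$ coincide.

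Second, I would identify $M_K^\wedge$ with the derived $I$-adic completion of $M$. Since $K$ is the Koszul complex on the generators $x_1,\ldots,x_n$ of $I$, standard arguments (see for example \cite[Chap.~7]{sag}) show that the localization functor $(-)_K^\wedge$ coincides with the derived $(x_1,\ldots,x_n)$-adic completion, which in turn depends only on the radical of the ideal $(x_1,\ldots,x_n) = I$ and agrees with derived $I$-adic completion.

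The main obstacle, and the only place noetherianness and finite generation are used, is to conclude that this derived $I$-adic completion coincides with the classical inverse limit $\lim_n M/I^n M$. This is a classical result in commutative algebra: by the Artin--Rees lemma, the tower $\{M/I^n M\}$ is Mittag--Leffler, so its higher derived inverse limits vanish; moreover, since $R$ is noetherian and $M$ is finitely generated, the natural maps $M \otimes_R^{\L} R/I^n \to M/I^n M$ become equivalences in the limit. Consequently the derived $I$-adic completion degenerates to $\lim_n M/I^n M$, which combined with the first two steps gives $M_S^\wedge \simeq \lim_n M/I^n M$ as desired.
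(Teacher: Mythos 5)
There is a genuine gap in your first step, and it sits exactly where the hypotheses of the lemma are supposed to do their work. You claim that $M_S^\wedge\simeq M_K^\wedge$ for \emph{every} $R$-module $M$ because the categorically $K$-complete and categorically $S$-complete subcategories of $\Mod_R$ coincide and ``both functors invert the same class of morphisms.'' That argument identifies the two \emph{categorical} completions, and by \cite[Prop.~2.21]{MNN17} the dualizability of the Koszul complex $K$ identifies $M_K^\wedge$ with the categorical completion; but $M_S^\wedge$ is by definition the totalization of the Amitsur complex of $R\rightarrow S$, i.e.\ an Adams/nilpotent completion, and since $S=R/I$ is in general not dualizable (perfect) over $R$ there is no formal identification of $M_S^\wedge$ with the categorical completion. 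Concretely, what is missing is that the unit map $M\rightarrow M_S^\wedge$ is an $S$-equivalence (equivalently, that $S\otimes_R(-)$ can be commuted past the $\Tot$, or that the $\Tot$-tower converges); inverting the same morphisms does not make two functors agree unless both are reflections onto the subcategory in question. This convergence statement is precisely the content of the references the paper cites for this lemma (\cite[Prop.~3.15]{bhatt-completions}, \cite[Thm.~4.4]{carlsson}, \cite[Prop.~7.3.6.1]{sag}), and it is where noetherianness and finite generation enter via Artin--Rees; your version of Artin--Rees only appears later, in the (standard and essentially correct) comparison of derived $I$-adic completion with $\lim_n M/I^nM$. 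The fact that your Step~1 is asserted for arbitrary $M$ with no hypotheses is the telltale sign that the hard point has been assumed rather than proved; Corollary~\ref{cor:complete_omni} also does not supply it, since it is a statement about completeness of the ring $k$ itself, proved via Lemma~\ref{lem:adams_completion} and Theorem~\ref{thm:nilinvariance}, not a statement that the two completion \emph{functors} coincide on all modules.

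Note that the paper itself does not reprove the lemma but quotes it. If you want a self-contained argument inside this paper's framework, the correct patch for your Step~1 is to use Lemma~\ref{lem:adams_completion}(b) (which requires $M$ bounded below, satisfied here since $M$ is discrete) for both $R\rightarrow K$ and $R\rightarrow S$, together with Theorem~\ref{thm:nilinvariance} applied to the canonical cover $K\rightarrow S$: this gives $M_K^\wedge\simeq\Infhat_{K/R}\widehat{\otimes}_RM\simeq\Infhat_{S/R}\widehat{\otimes}_RM\simeq M_S^\wedge$, with no circularity, after which your identification of $M_K^\wedge$ with derived $I$-adic completion and the Artin--Rees comparison with $\lim_n M/I^nM$ go through.
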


\begin{proof}
    See~\cite[Prop.~3.15]{bhatt-completions} or~\cite[Thm.~4.4]{carlsson} or~\cite[Prop.~7.3.6.1]{sag}.
\end{proof}

As a consequence, we can strengthen our results from~\cite[Thm.~10.3]{antieau_crystallization} on
Hodge-complete infinitesimal cohomology in characteristic $p$ by removing the quasisyntomic
hypothesis at the cost of adding some finiteness hypothesis. This result was indicated to me
several years ago by Akhil Mathew and has also been obtained by Jiaqi Fu in~\cite[Thm.~1.1]{fu_darboux} with
stronger finiteness hypotheses.

\begin{theorem}\label{thm:charp}
    Suppose that $k$ is a perfect field of characteristic $p>0$.
    Let $R$ be a connective derived commutative $k$-algebra such that $\pi_0R$ is finitely
    presented as a commutative $k$-algebra. Then, the maps in the natural commutative diagram
    $$\xymatrix{
        R^\perf\ar[r]\ar[d]&(\pi_0R)^\perf\ar[d]\\
        \Infhat_{R/k}\ar[r]&\Infhat_{(\pi_0R)/k}
    }$$ are equivalences.
\end{theorem}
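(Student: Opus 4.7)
The plan is to show all four maps in the square are equivalences by establishing three of them directly and using $2$-out-of-$3$ for the fourth. The bottom map is immediate from the main nilinvariance theorem, the top map is a standard fact about perfection in characteristic $p$, and the entire content lies in the right-hand vertical map, which we reduce to the known quasisyntomic case from the prequel via a Koszul resolution.

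First, for the bottom map, the truncation $R\rightarrow\pi_0R$ has $1$-connective fiber $\tau_{\geq 1}R$, so it is a canonical cover by the same principle used in the sketch of Theorem~\ref{thm:descent} (a map between connective derived commutative rings with $1$-connective fiber is a canonical cover); it is also an isomorphism on $\pi_0$. Theorem~\ref{thm:nilinvariance} then yields $\Infhat_{R/k}\we\Infhat_{(\pi_0R)/k}$. For the top map, in characteristic $p$ the Frobenius endomorphism $\phi$ of any connective derived commutative $\FF_p$-algebra vanishes on $\pi_i$ for $i\geq 1$ (by graded commutativity and the divided-power structure on derived symmetric powers), and on $\pi_0$ it is the classical Frobenius. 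Thus the colimit $R^\perf=\colim_\phi R$ depends only on $\pi_0R$ modulo nilpotents; in particular $R^\perf\we(\pi_0R)^\perf$.

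The main work is the right-hand map $(\pi_0R)^\perf\rightarrow\Infhat_{(\pi_0R)/k}$. Since $\pi_0R$ is a finitely presented commutative $k$-algebra, choose a presentation $\pi_0R\iso k[x_1,\ldots,x_n]/(f_1,\ldots,f_m)$. Let $\widetilde{R}$ denote the Koszul complex of $(f_1,\ldots,f_m)$ in $P=k[x_1,\ldots,x_n]$. Then $\widetilde{R}$ is a connective derived commutative $k$-algebra with $\pi_0\widetilde{R}=\pi_0R$, and its cotangent complex $\L_{\widetilde{R}/k}$ has Tor-amplitude in $[-1,0]$, so $\widetilde{R}$ is quasisyntomic. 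The canonical map $\widetilde{R}\rightarrow\pi_0R$ has $1$-connective fiber and is surjective on $\pi_0$ (in fact an isomorphism), so Theorem~\ref{thm:nilinvariance} gives $\Infhat_{\widetilde{R}/k}\we\Infhat_{(\pi_0R)/k}$. On the other hand, the characteristic $p$ perfection observation above gives $\widetilde{R}^\perf\we(\pi_0\widetilde{R})^\perf=(\pi_0R)^\perf$. The quasisyntomic result~\cite[Thm.~10.3]{antieau_crystallization} applied to $\widetilde{R}$ yields $\widetilde{R}^\perf\we\Infhat_{\widetilde{R}/k}$, so stringing together
\[
(\pi_0R)^\perf\we\widetilde{R}^\perf\we\Infhat_{\widetilde{R}/k}\we\Infhat_{(\pi_0R)/k}
\]
establishes the right vertical arrow. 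The left vertical arrow is then an equivalence by commutativity of the square and $2$-out-of-$3$.

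The main obstacle is the reduction step for the right vertical map: one must verify that the Koszul complex $\widetilde{R}$ really is a quasisyntomic derived commutative $k$-algebra to which~\cite[Thm.~10.3]{antieau_crystallization} applies, and that the characteristic-$p$ vanishing of Frobenius on positive-degree homotopy (used both for $R$ and for $\widetilde{R}$) holds in the derived commutative setting rather than only for simplicial commutative or ordinary commutative algebras. Once these two ingredients are in place, the argument assembles formally from Theorem~\ref{thm:nilinvariance} and the prequel.
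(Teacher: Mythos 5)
Your treatment of the top map (Frobenius kills positive-degree homotopy, so $R^\perf\we(\pi_0R)^\perf$) and of the bottom map (via Theorem~\ref{thm:nilinvariance}, since $R\rightarrow\pi_0R$ is a canonical cover inducing an isomorphism on $\pi_0$) matches the paper, as does the $2$-out-of-$3$ step. The gap is exactly the point you flag and then leave unresolved: the application of \cite[Thm.~10.3]{antieau_crystallization} to the Koszul complex $\widetilde{R}$ on $(f_1,\ldots,f_m)$ in $k[x_1,\ldots,x_n]$. That theorem, as used in this paper, is a statement about quasisyntomic $k$-algebras in the usual (discrete) sense; its hypothesis is precisely the one the present theorem is advertised as removing. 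For a general finitely presented $\pi_0R$ --- e.g.\ $k[x,y]/(x^2,xy,y^2)$, which is not lci --- no generating set of the ideal is a regular sequence, so $\widetilde{R}$ has nonvanishing higher homotopy and is not a quasisyntomic $k$-algebra; asserting that the prequel's theorem covers such derived rings with $\L_{\widetilde{R}/k}$ of Tor-amplitude $[-1,0]$ is essentially asserting (a case of) the theorem being proved, since by nilinvariance that derived statement and Theorem~\ref{thm:charp} are interchangeable. So the argument is not complete: its entire content has been deferred to an unproved strengthening of the input theorem.

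What the paper does instead is engineer an honest discrete quasisyntomic replacement for your $\widetilde{R}$. After reducing to $R=\pi_0R$, choose a surjection $S\rightarrow R$ from a smooth $k$-algebra with kernel $I$ and pass to the $I$-adic completion $S_I^\wedge$. Since $S$ is a $G$-ring, Nagata's theorem makes $S\rightarrow S_I^\wedge$ geometrically regular, and N\'eron--Popescu desingularization makes it ind-smooth; hence $\L_{S_I^\wedge/k}$ is flat in degree $0$ and $S_I^\wedge$ is a (discrete) quasisyntomic $k$-algebra, so \cite[Thm.~10.3]{antieau_crystallization} legitimately gives $(S_I^\wedge)^\perf\we\Infhat_{S_I^\wedge/k}$. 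Moreover $S_I^\wedge$ is $F$-finite, so $\L_{S_I^\wedge/k}$ is finitely presented projective, which lets one run the Gauss--Manin connection for $k\rightarrow S_I^\wedge\rightarrow R$ with underived tensor products; combined with Lemma~\ref{lem:adams_completion} (giving $\Infhat_{R/S_I^\wedge}\we S_I^\wedge$ because $S_I^\wedge$ is $R$-complete), this identifies $\Infhat_{S_I^\wedge/k}\we\Infhat_{R/k}$, and finally $(S_I^\wedge)^\perf\we\lim_n(S/I^n)^\perf\we R^\perf$. That chain of inputs (Nagata, N\'eron--Popescu, $F$-finiteness, the Gauss--Manin filtration, Adams completion) is the actual substance of the proof; to repair your proposal you would need either this route or an independent proof that infinitesimal cohomology computes perfection for non-discrete quasi-lci derived rings.
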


We thank Rankeya Datta for help with the commutative algebra in the following proof.

\begin{proof}[Proof of Theorem~\ref{thm:charp}]
    The top map is an equivalence as perfect derived commutative $\bF_p$-algebras are
    coconnective by~\cite[Prop.~11.6]{bhatt-scholze-witt}. The bottom map is an equivalence by
    Theorem~\ref{thm:nilinvariance}. Thus, it suffices to see that the right map is an
    equivalence. So, now assume that $R\we\pi_0R$.
    Let $S\rightarrow R$ be a surjection from a smooth commutative $k$-algebra with kernel $I$.
    Let $S_I^\wedge$ denote the $I$-adic completion of $S$ (equivalent to the completion
    $S_R^\wedge$ by Lemma~\ref{lem:classical}). Since $S$ is smooth over $k$, it is in particular a
    $G$-ring. By a theorem of
    Nagata, it follows that $S\rightarrow S_I^\wedge$ is geometrically regular;
    see~\cite[Thm.~79]{matsumura} or~\cite[\href{https://stacks.math.columbia.edu/tag/0AH2}{Tag
    0AH2}]{stacks-project}. By N\'eron--Popescu desingularization~\cite{popescu,swan}, it
    follows that $S\rightarrow S_I^\wedge$ is ind-smooth. In particular, $\L_{S_I^\wedge/S}$ is
    equivalent to a
    flat $S_I^\wedge$-module concentrated in degree $0$. The conormal sequence now implies that
    $\L_{S_I^\wedge/k}$ is a flat-module concentrated in degree $0$. It follows that $S_I^\wedge$
    is a quasisyntomic $k$-algebra.
    By~\cite[Thm.~10.3]{antieau_crystallization}, it follows that
    $(S_I^\wedge)^\perf\we\Infhat_{S_I^\wedge/k}$. However, $S_I^\wedge$ is also $F$-finite,
    so in fact $\L_{S_I^\wedge/k}$ is a finitely presented projective $S_I^\wedge$-module
    by~\cite[Lem.~3.10]{dundas-morrow}; in
    particular, it is perfect. Now, the Gauss--Manin connection on $\Infhat_{R/k}$ associated to the composition
    $k\rightarrow S_I^\wedge\rightarrow R$ takes the form
    $$\Infhat_{R/S_I^\wedge}\rightarrow\Infhat_{R/S_I^\wedge}\widehat{\otimes}_{S_I^\wedge}\L_{S_I^\wedge/k}\rightarrow\cdots.$$
    Each $\LSym^i(\L_{S_I^\wedge/k}[-1])$ is perfect over $S_I^\wedge$,
    so the completions are not necessary in the tensor products. But, $S_I^\wedge$ is $R$-complete, so
    $S_I^\wedge\we\Infhat_{R/S_I^\wedge}$ by Lemma~\ref{lem:adams_completion}. It follows that
    the maps $(S_I^\wedge)^\perf\rightarrow\Infhat_{S_I^\wedge/k}\rightarrow\Infhat_{R/k}$ are
    equivalences. Finally, $(S_I^\wedge)^\perf\we\lim(S/I^n)^\perf$, but this limit diagram is constant on
    $R^\perf$.
\end{proof}

Finally, we use the Gauss--Manin connection to prove Bhatt's comparison theorem between derived de
Rham cohomology and Hartshorne's algebraic de Rham cohomology.

\begin{theorem}\label{thm:hartshorne}
    Let $k\rightarrow R\rightarrow S$ be maps of commutative $\bQ$-algebras. Assume that $R$
    is smooth over $k$ and $R\rightarrow S$ is surjective with kernel $I$. Then, $\dRhat_{S/k}$ is equivalent to
    the classically $I$-adically completed de Rham complex
    $$R_I^\wedge\rightarrow(\Omega^1_{R/k})_I^\wedge\rightarrow(\Omega^2_{R/k})_I^\wedge\rightarrow\cdots.$$
\end{theorem}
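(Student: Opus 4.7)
The plan is to apply the Gauss--Manin filtration of Theorem~\ref{thm:griffiths} to the composition $k\to R\to S$, identify each graded piece with a classically $I$-adically completed $\Omega^i_{R/k}$, and reassemble via the completeness of that filtration.

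First I would observe that smoothness of $k\to R$ makes $\L_{R/k}\simeq\Omega^1_{R/k}$ a finitely generated projective $R$-module, so each $\Lambda^i\L_{R/k}\simeq\Omega^i_{R/k}$ is perfect. The completed tensor product appearing in the Gauss--Manin graded pieces of Theorem~\ref{thm:main}(c) then collapses to the ordinary tensor product, and forgetting the Hodge filtration yields a complete filtration $\F^\star_\GM\dRhat_{S/k}$ with associated graded pieces
$$\gr^i_\GM\dRhat_{S/k}\simeq\dRhat_{S/R}\otimes_R\ins^i\Omega^i_{R/k}[-i].$$

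Next, I would identify each $\dRhat_{S/R}\otimes_R\Omega^i_{R/k}$ with the classical completion $(\Omega^i_{R/k})^\wedge_I$. In characteristic zero, $\dRhat_{S/R}\simeq\Infhat_{S/R}$, and since $\pi_0R\to\pi_0S = S$ is surjective, Lemma~\ref{lem:adams_completion}(b) applied to the bounded-below $R$-module $\Omega^i_{R/k}$ yields an equivalence $\Infhat_{S/R}\widehat\otimes_R\Omega^i_{R/k}\simeq(\Omega^i_{R/k})^\wedge_S$. By perfectness of $\Omega^i_{R/k}$ the completed tensor product is already the ordinary one. After a routine noetherian reduction (replacing $k$ by a finitely generated $\bQ$-subalgebra carrying $R$ together with generators of $I$, and using flat base change), Lemma~\ref{lem:classical} identifies $(\Omega^i_{R/k})^\wedge_S$ with the classical $I$-adic completion $(\Omega^i_{R/k})^\wedge_I$.

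By completeness of the Gauss--Manin filtration, $\dRhat_{S/k}$ is then the totalization of the associated coherent cochain complex
$$\dRhat_{S/R}\xrightarrow{\nabla}\dRhat_{S/R}\otimes_R\Omega^1_{R/k}\xrightarrow{\nabla}\dRhat_{S/R}\otimes_R\Omega^2_{R/k}\xrightarrow{\nabla}\cdots,$$
which becomes the Hartshorne complex $R^\wedge_I\to(\Omega^1_{R/k})^\wedge_I\to\cdots$ under the identifications above. The main obstacle will be verifying that under these equivalences the Gauss--Manin differentials $\nabla$ correspond to the continuous extensions of the classical de Rham differentials $\d$. This should follow from naturality together with the remark after Theorem~\ref{thm:griffiths} that the Gauss--Manin filtration recovers the classical Katz--Oda complex for smooth towers of commutative rings: the $R\otimes_k R$-linear map $\d$ agrees with $\nabla$ on uncompleted forms by Katz--Oda, and both sides extend uniquely and continuously to the $I$-adic completions, so the agreement propagates.
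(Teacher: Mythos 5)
Your argument is essentially the paper's own proof: it runs the Gauss--Manin connection of Theorem~\ref{thm:griffiths} for $k\to R\to S$, uses smoothness to drop the completion on the graded pieces, identifies $\dRhat_{S/R}\widehat{\otimes}_R\Omega^i_{R/k}$ with the classical $I$-adic completion $(\Omega^i_{R/k})_I^\wedge$ via the Adams-completion results (the paper cites Corollary~\ref{cor:complete_omni} and Lemma~\ref{lem:classical}, which is the same mechanism as your appeal to Lemma~\ref{lem:adams_completion}(b)), and concludes by completeness of the Gauss--Manin filtration. Your added remarks on matching $\nabla$ with the continuous extension of the classical differential, and the noetherian reduction, are elaborations of points the paper leaves implicit, not a different route.
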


\begin{proof}
    Consider the Gauss--Manin connection
    $$\dRhat_{S/R}\rightarrow\dRhat_{S/R}\widehat{\otimes}_R\Omega^1_{R/k}\rightarrow\cdots,$$
    which computes $\dRhat_{S/k}$. By Corollary~\ref{cor:complete_omni} and
    Lemma~\ref{lem:classical}, we have that $\dRhat_{S/R}\we R_I^\wedge$ and similarly
    $\dRhat_{S/R}\widehat{\otimes}_R\Omega^i_{R/k}\we (\Omega^i_{R/k})_I^\wedge$ for all $i\geq 0$
    since $\Omega^i_{R/k}$ is finitely generated.
\end{proof}

\begin{remark}
    Note in the context of Theorem~\ref{thm:hartshorne} that the associated graded pieces of the
    Hodge filtration on $\dRhat_{R/k}$ and $\dRhat_{S/k}$ are typically not equivalent.
\end{remark}

In characteristic zero we also obtain $\bA^1$-invariance.

\begin{proposition}[$\bA^1$-invariance]
    Let $k$ be a connective derived commutative $\bQ$-algebra and let $R$ be a connective derived
    commutative $k$-algebra. Then, the natural map $\Infhat_{R/k}\rightarrow\Infhat_{R[t]/k}$ is an
    equivalence.
\end{proposition}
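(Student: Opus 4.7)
The plan is to apply Theorem~\ref{thm:main}(b) to the composition $k\to R\to R[t]$ and to match the resulting complete Gauss--Manin filtration on $\Infhat_{R[t]/k}$ with the Hodge filtration on $\Infhat_{R/k}$.

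Since $k$ has characteristic zero, $\Infhat\we\dRhat$ throughout, so one may work with derived de Rham cohomology. The first step is a derived Poincar\'e lemma for $\bA^1$: because $R\to R[t]$ is smooth with $\L_{R[t]/R}\we R[t]\,\d t$ free of rank one in degree zero, $\dRhat_{R[t]/R}$ is the classical two-term complex $R[t]\xrightarrow{\d}R[t]\,\d t$, whose cohomology in characteristic zero is $R$ via the unit. In particular the natural map $R\we\Infhat_{R[t]/R}$ is an equivalence.

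Theorem~\ref{thm:main}(b) then yields a complete bifiltration $\F^\star_\GM\F^\star_\H\Infhat_{R[t]/k}$ whose $i$-th GM graded piece is $\F^\star_\H\Infhat_{R[t]/R}\widehat{\otimes}_R\ins^i\LSym^i(\L_{R/k}[-1])$. Passing to the underlying Hodge total object produces a complete filtration $\F^\star_\GM\Infhat_{R[t]/k}$ on $\Infhat_{R[t]/k}$ whose $i$-th graded piece is $\Infhat_{R[t]/R}\otimes_R\LSym^i(\L_{R/k}[-1])\we\LSym^i(\L_{R/k}[-1])$, matching the $i$-th Hodge graded piece of $\Infhat_{R/k}$.

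To upgrade this match to a filtered equivalence, I use that the bifiltration is constructed in Section~\ref{sec:filtered_horizontal} as $\Delta_!\F^\star_\H\Infhat_{R/k}\,\overline{\otimes}_{\chi_!\F^\star_\H\Infhat_{R/k}}\,\chi_!\F^\star_\H\Infhat_{R[t]/k}$. The canonical map from $\Delta_!\F^\star_\H\Infhat_{R/k}$, restricted to Hodge-degree zero (which recovers $\F^\star_\H\Infhat_{R/k}$ since $\F^{i,0}\Delta_!\F^\star M\we\F^iM$), yields a natural map of complete filtrations $\F^\star_\H\Infhat_{R/k}\to\F^\star_\GM\Infhat_{R[t]/k}$ lifting the functoriality map $\Infhat_{R/k}\to\Infhat_{R[t]/k}$. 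On graded pieces this is the unit $R\we\Infhat_{R[t]/R}$ tensored with $\LSym^i(\L_{R/k}[-1])$, which is an equivalence. Completeness of both sides then upgrades this to an equivalence of filtered objects, and passing to underlying totals yields $\Infhat_{R/k}\we\Infhat_{R[t]/k}$.

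The main hurdle is verifying naturality of the filtered map $\F^\star_\H\Infhat_{R/k}\to\F^\star_\GM\Infhat_{R[t]/k}$; the rest is bookkeeping with filtered tensor products. Note that completeness of the Hodge filtration on $\Infhat_{R/k}$ is built into the definition of the completion, while completeness of the GM filtration is supplied by Theorem~\ref{thm:main}.
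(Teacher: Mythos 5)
Your proposal is correct and follows essentially the same route as the paper: run the Gauss--Manin filtration of Theorem~\ref{thm:main}(b) along $k\to R\to R[t]$, identify each graded piece $\Infhat_{R[t]/R]}\widehat{\otimes}_R\LSym^i(\L_{R/k}[-1])$ with $\LSym^i(\L_{R/k}[-1])$ via the classical Poincar\'e lemma for the two-term complex $R[t]\xrightarrow{\d}R[t]\,\dt$, and conclude by completeness. The paper phrases this as ``arguing as in the proof of Theorem~\ref{thm:nilinvariance}''; your extra care in exhibiting the comparison map via $\Delta_!$ is just a more explicit rendering of the same argument.
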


\begin{proof}
    Arguing as in the proof of Theorem~\ref{thm:nilinvariance}, it is enough to
    see that the natural map
    $\LSym^i_R(\L_{R/k}[-1])\rightarrow\Infhat_{R[t]/R}\widehat{\otimes}_R\LSym^i_R(\L_{R/k}[-1])$
    is an equivalence for $i\geq 0$.
    The right-hand term is the fiber of $$R[t]\xrightarrow{\d} R[t]\dt$$ tensored with
    $\LSym_R^i(\L_{R/k}[-1])$. But, this fiber is $R$ by the classical Poincar\'e lemma.
\end{proof}

\section{The Quillen spectral sequence}\label{sec:quillen}

The Quillen spectral sequence admits an exotic-looking $\E^1$-page involving symmetric powers of
the cotangent complex and converging to $\Tor^k_*(R,R)$. It was introduced
in~\cite[Thm.~6.3]{quillen-cohomology}; see~\cite[\href{https://stacks.math.columbia.edu/tag/08RC}{Tag
08RC}]{stacks-project}. We give an easy explanation for the
existence of this spectral sequence using infinitesimal cohomology. The non-trivial work then lies in establishing
convergence. A similar perspective is contained in Bhatt's paper~\cite{bhatt-completions} but is
restricted to characteristic $0$. One of our main points is that infinitesimal cohomology in many
ways extends the usual results about derived de Rham cohomology in characteristic $0$ to mixed
characteristic. The Poincar\'e lemma for infinitesimal
cohomology~\cite[Prop.~9.6]{antieau_crystallization}, which we use in the proof below, was another instance of this principle.

\begin{theorem}[Quillen spectral sequence]\label{thm:quillen}
    Let $k\rightarrow R$ be a map of derived commutative rings.
    The Quillen spectral sequence is the spectral sequence associated to the Hodge filtration
    $\F^\star_\H\Inf_{R/R\otimes_kR}$ on
    $\Inf_{R/R\otimes_kR}$. This filtration is complete, and the spectral sequence converges, if
    $k$ and $R$ are connective and $\pi_0k\rightarrow\pi_0R$ is surjective.
\end{theorem}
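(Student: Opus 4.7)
The plan is to realize the Quillen spectral sequence as the one associated to the Hodge filtration on $\Inf_{R/R\otimes_k R}$, where $R\otimes_k R\to R$ is the multiplication map. There are three things to identify: the total object, the associated graded pieces, and (under the hypotheses) the completeness.

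First I would identify the abutment. By the Poincar\'e lemma for infinitesimal cohomology, the natural map $R\otimes_k R\to\Inf_{R/R\otimes_k R}$ is an equivalence, so $\pi_*\Inf_{R/R\otimes_k R}\iso\Tor^k_*(R,R)$ is the correct target of the Quillen spectral sequence.

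Next I would compute the associated graded pieces. The Hodge filtration on $\Inf_{R/R\otimes_k R}$ has $\gr^i_\H\Inf_{R/R\otimes_k R}\we\LSym^i(\L_{R/(R\otimes_k R)}[-1])$. So the key calculation is $\L_{R/(R\otimes_k R)}\we\L_{R/k}[1]$. I would obtain this from the transitivity cofiber sequence for $k\to R\otimes_k R\to R$: the base change $R\otimes_{R\otimes_k R}\L_{R\otimes_k R/k}$ splits as $\L_{R/k}\oplus\L_{R/k}$ via the two structure maps, and its map to $\L_{R/k}$ is the fold map, which is a split surjection with kernel $\L_{R/k}$. Hence the cofiber $\L_{R/(R\otimes_k R)}$ is $\L_{R/k}[1]$, so $\gr^i_\H\Inf_{R/R\otimes_k R}\we\LSym^i(\L_{R/k})$. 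The spectral sequence of the filtration then has $\E^1_{s,t}=\pi_{s+t}\LSym^{-s}(\L_{R/k})$ converging conditionally to $\Tor^k_{s+t}(R,R)$, matching the Quillen spectral sequence.

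Finally I would verify completeness under the assumptions that $k$ and $R$ are connective with $\pi_0k\rightarrow\pi_0R$ surjective. In this situation, $\pi_0(R\otimes_k R)\iso\pi_0R\otimes_{\pi_0k}\pi_0R\iso\pi_0R$, so the fiber of the multiplication map $R\otimes_k R\to R$ is $1$-connective. Lemma~\ref{lem:connective_complete} then gives that $\F^\star_\H\Inf_{R/R\otimes_k R}$ is complete, and a complete bounded-below filtration yields a convergent spectral sequence.

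The main technical obstacle is the cotangent computation in step two, specifically that one must correctly identify the fold map as a split surjection in order to see $\L_{R/(R\otimes_k R)}\we\L_{R/k}[1]$; once this shift is in place, the rest follows from the general infrastructure developed earlier, and the connectivity hypothesis plugs directly into Lemma~\ref{lem:connective_complete} to deliver convergence.
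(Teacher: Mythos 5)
Your proposal is correct and follows essentially the same route as the paper: the Poincar\'e lemma identifies the abutment with $R\otimes_kR$, the shift $\L_{R/(R\otimes_kR)}\we\L_{R/k}[1]$ gives the graded pieces $\LSym^i(\L_{R/k})$, and Lemma~\ref{lem:connective_complete} gives completeness; you merely supply the transitivity-sequence details that the paper leaves implicit. One micro-point: $1$-connectivity of the fiber of $R\otimes_kR\to R$ needs surjectivity on $\pi_1$ in addition to your $\pi_0$ computation, which holds because the multiplication map is split by the unit $R\to R\otimes_kR$ (exactly as in the paper's proof of Lemma~\ref{lem:adams_completion}).
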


\begin{proof}
    Note that $\L_{R/R\otimes_kR}\we\L_{R/k}[1]$. Thus, the associated graded pieces of the Hodge
    filtration on $\Inf_{R/R\otimes_kR}$ are given by $\LSym_R^*(\L_{R/k})$.
    Since $\Inf_{R/R\otimes_kR}\we R\otimes_kR$ by the Poincar\'e
    lemma~\cite[Prop.~9.6]{antieau_crystallization},
    the associated Hodge--infinitesimal spectral sequence takes the form
    $$\E^1_{s,t}=\pi_{s+t}(\LSym^{-s}(\L_{R/k}))\Rightarrow\Tor_{s+t}^k(R,R)$$
    with differential $\d^r$ of bidegree $(-r,r-1)$. (This is a standard reindexing of the spectral sequence
    of~\cite{quillen-cohomology}.) The claim about convergence follows from
    Lemma~\ref{lem:connective_complete}.
\end{proof}

\begin{variant}
    Suppose now that $R,S,T$ are derived commutative $k$-algebras and that there is a map
    $R\otimes_kS\rightarrow T$. Then, we can again take $\Inf_{T/R\otimes_kS}$
    with its Hodge filtration which has associated graded pieces
    $\LSym_T^*(\L_{T/R\otimes_kS})$. If this filtration is complete, 
    there is an associated spectral sequence converging to $\pi_*(R\otimes_kS)$.
\end{variant}

\begin{remark}
    If $k\rightarrow R$ is a surjective map of commutative rings with kernel $I$, then the low-degree information
    arising from the Quillen spectral sequence is often the best tool to understand the first few
    homotopy groups of the cotangent complex. Specifically, as
    in~\cite[\href{https://stacks.math.columbia.edu/tag/08RG}{Tag 08RG}]{stacks-project}, there is an exact sequence
    $$\Tor_3^k(R,R)\rightarrow\pi_3(\L_{R/k})\rightarrow\Lambda^2(I/I^2)\rightarrow\Tor_2^k(R,R)\rightarrow\pi_2(\L_{R/k})\rightarrow
    0,$$
    where $\Lambda^2(I/I^2)$ denotes, unusually, the non-derived exterior square of the $R$-module
    $I/I^2$.
    This exact sequence is used in commutative algebra, especially in connection to investigations
    of the cotangent complex. See for example~\cite[Sec.~4]{herzog}.
\end{remark}

\section{Another look at the HKR filtration}\label{sec:hkr}

The next theorem explains how to recover the HKR filtration on $\HH(R/k)$ without the circle
action. From the perspective that $\HH(R/k)$ is the $\Oscr$-cohomology of the loop stack of $\Spec
R$, the theorem says that the adic filtration with respect to constant loops is the HKR filtration.

\begin{theorem}\label{thm:hkr}
    Let $k\rightarrow R$ be a map of derived commutative rings and let $\HH(R/k)\rightarrow R$ be
    the natural augmentation map. Then, there is a natural identification
    $\HH_\fil(R/k)\we\F^\star_\H\Inf_{R/\HH(R/k)}$.
\end{theorem}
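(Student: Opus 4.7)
My plan is to construct a natural comparison map $\F^\star_\H\Inf_{R/\HH(R/k)}\to\HH_\fil(R/k)$ from the universal property of infinitesimal cohomology, and then show it is an equivalence on associated graded pieces, concluding via completeness.

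First, I would check that both sides agree on $\F^0$ and on each $\gr^i$. The HKR decomposition exhibits $\HH(R/k)\to R$ as a projection onto weight zero with $1$-connective fiber (at least when $k\to R$ is connective), so the Poincar\'e lemma (\cite[Prop.~9.6]{antieau_crystallization}) gives $\Inf_{R/\HH(R/k)}\we\HH(R/k)$, matching $\F^0\HH_\fil(R/k)=\HH(R/k)$. For the higher pieces, I would apply the transitivity triangle for the composition $R\to\HH(R/k)\to R$: base change along the defining pushout $\HH(R/k)=R\otimes_{R\otimes_k R}R$ together with the standard $\L_{R/R\otimes_k R}\we\L_{R/k}[1]$ yields $\L_{\HH(R/k)/R}\we\HH(R/k)\otimes_R\L_{R/k}[1]$, and since the composition is the identity one concludes $\L_{R/\HH(R/k)}\we\L_{R/k}[2]$. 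Hence
$$\gr^i\F^\star_\H\Inf_{R/\HH(R/k)}\we\LSym^i(\L_{R/k}[1])\we\Lambda^i\L_{R/k}[i]$$
via the standard d\'ecalage equivalence, which agrees with $\gr^i\HH_\fil(R/k)$ from HKR.

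Next, I would build the comparison map via the universal property. Regard $\HH_\fil(R/k)$ as an infinitesimally filtered derived commutative $\ins^0\HH(R/k)$-algebra via the unit $\ins^0\HH(R/k)\to\HH_\fil(R/k)$ arising from $\F^0\HH_\fil=\HH(R/k)$; then $\gr^0\HH_\fil(R/k)=R$ carries its canonical $\HH(R/k)$-algebra structure from the augmentation. The infinitesimal relative analogue of Proposition~\ref{prop:gmraksit}, applied with base $\HH(R/k)$, supplies an adjunction whose left adjoint is $\F^\star_\H\Inf_{-/\HH(R/k)}$; feeding the identity $R\to R$ into it yields a canonical morphism
$$\F^\star_\H\Inf_{R/\HH(R/k)}\to\HH_\fil(R/k)$$
of filtered derived commutative $\ins^0\HH(R/k)$-algebras.

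Finally, I would check this map is an equivalence. Both filtrations are complete, at least when $k\to R$ is connective: the HKR filtration by the argument of Example~\ref{ex:flathh}, and $\F^\star_\H\Inf_{R/\HH(R/k)}$ by Lemma~\ref{lem:connective_complete} applied to the $1$-connective augmentation $\HH(R/k)\to R$. A map of complete filtered objects is an equivalence if and only if it is so on all $\gr^i$, and both sides compute to $\Lambda^i\L_{R/k}[i]$. The main obstacle will be confirming that the universally constructed map realizes the correct d\'ecalage isomorphism on graded pieces, rather than some twisted variant; I expect to handle this by reducing to the universal case $R=\LSym_k(M)$ (from which both sides arise by left Kan extension in $R$), where each side admits an explicit model and the identification of $\LSym^i(\L_{R/k}[1])$ with $\Lambda^i\L_{R/k}[i]$ is classical.
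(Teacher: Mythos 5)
Your proposal is correct in substance and follows the same skeleton as the paper's proof: the comparison map is produced from the (infinitesimal analogue of the) adjunction of Proposition~\ref{prop:gmraksit} applied over the base $\HH(R/k)$, and all the content is then concentrated in the comparison of associated graded pieces. Your general identification $\L_{R/\HH(R/k)}\we\L_{R/k}[2]$, via base change along $\HH(R/k)\we R\otimes_{R\otimes_kR}R$ and the transitivity triangle for $R\to\HH(R/k)\to R$, is cleaner and more general than what the paper records (the paper only writes this down for $\bZ[x]$). The ``main obstacle'' you flag is exactly where the paper's proof lives: it observes that it suffices to check the induced map on $\gr^1$ (the graded rings on both sides being freely generated by their weight-one pieces), and then verifies this single case by naturality at $k=\bZ$, $R=\bZ[x]$, where the Hodge spectral sequence for $\Inf_{\bZ[x]/\HH(\bZ[x]/\bZ)}$ collapses and Lemma~\ref{lem:connective_complete} supplies completeness. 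So your plan of reducing to the free case is the right one, and reducing first to $\gr^1$ is what makes that final check small.

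Two caveats. The theorem carries no connectivity hypothesis, while your concluding step invokes completeness of both filtrations, which you only have for connective $k\to R$ (and the HKR filtration is genuinely incomplete in general without connectivity); likewise your sifted-colimit reduction to $R=\LSym_k(M)$ only reaches connective $R$. Completeness is in fact not needed: the Poincar\'e lemma holds for arbitrary maps of derived commutative rings (so your hedge there is unnecessary), hence the comparison map is an equivalence on $\F^0$, and for nonnegatively indexed filtrations an equivalence on $\F^0$ together with equivalences on all $\gr^i$ already gives a filtered equivalence, since $\F^n\we\fib(\F^0\to\F^0/\F^n)$ and $\F^0/\F^n$ is a finite extension of the $\gr^i$ with $i<n$. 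Phrasing the final verification at the level of $\gr^1$, as the paper does, sidesteps the completeness of the total objects entirely.
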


\begin{proof}
    Note that $\HH_\fil(R/k)$ is naturally an infinitesimal filtered derived commutative
    $\HH(R/k)$-algebra with $\gr^0$ equivalent to $R$. Thus, there is a natural map
    $\F^\star_\H\Inf_{R/\HH(R/k)}\rightarrow\HH_\fil(R/k)$. It is enough to see that the induced
    map on $\gr^1$ is an equivalence. For this, we can reduce
    to the case of $k=\bZ$ and $R=\bZ[x]$. The cotangent complex
    $\L_{\bZ[x]/\HH(\bZ[x]/\bZ)}$ can be identified with $\bZ[x][2]$ with a generator called
    $\sigma^2\d x$. The formula $\LSym_{\bZ[x]}(\bZ[x][2][-1])$ for the associated graded pieces of
    the Hodge filtration now shows that the spectral sequence (converging to
    $\Infhat_{\bZ[x]/\HH(\bZ[x]/\bZ)}$) collapses at the $\E^1$-page and that the homotopy groups
    of the result are the desired exterior algebra. However, by
    Lemma~\ref{lem:connective_complete}, the filtration is complete.
\end{proof}

\begin{remark}[$S^1$-equivariance]
    Theorem~\ref{thm:hkr} can be upgraded to an $S^1$-equivariant equivalence. Specifically, we see from the
    identification
    $$\F^\star_\H\Inf_{R/\HH(R/k)}\we\F^\star_\H\Inf_{R/R\otimes_{R\otimes_kR}R}\we
    R\otimes_{R\otimes_{\F^\star_\H\Inf_{R/k}}R}R$$
    that $\F^\star_\H\Inf_{R/\HH(R/k)}$ carries a natural circle action. Indeed, it is
    $\HH(R/\F^\star_\H\Inf_{R/k})$, the Hochschild homology of $R$ relative to
    $\F^\star_\H\Inf_{R/k}$ computed internally to $\FMod_k$. This recovers the $S^1$-equivariant HKR
    filtration. It is not $\bT_\fil$-equivariant in the sense that the circle action does not
    induce the de Rham differential on graded pieces. Of course, the filtered circle action also
    exists, but we do not see it directly from this picture.
\end{remark}

\begin{remark}
    Note that the forgetful functor
    $$\DAlg_k\xleftarrow{\F^0}\cMod_{\bT_\fil^\vee}(\FDAlg_k^\inf)$$ factors through the forgetful
    functor
    \begin{equation}\label{eq:forget}
        \cMod_{\bT^\vee}(\FDAlg_k^\inf)\leftarrow\cMod_{\bT_\fil^\vee}(\FDAlg_k^\inf)
    \end{equation}
    obtained by corestriction of scalars along the map $\bT_\fil^\vee\rightarrow\bT^\vee$ of
    infinitesimal filtered derived bicommutative bialgebras, where we use $\ins^0$ to view
    $\bT^\vee$ as such an object.
    It follows that the HKR filtration on $\HH(R/k)$ together with its
    $\bT_\fil^\vee$-coaction is the image of $\ins^0\HH(R/k)$ under the left adjoint
    to~\eqref{eq:forget}.
\end{remark}

\section{Characteristic zero prismatic cohomology}\label{sec:prism}

Nils Wa{\ss}muth wrote a masters thesis~\cite{wasmuth-thesis} under Scholze on prismatic cohomology in characteristic $0$.
In it, several features from $q$-de Rham cohomology appear. We explain this theory entirely by
using the Gauss--Manin connection and also extended it to arbitrary characteristics.
We do not touch upon the more subtle aspects of the theory such as degeneration of the Hodge--de
Rham spectral sequence. For these, see~\cite{andreatta-iovita,bms1,scholze-padic}

\begin{definition}
    Let $k$ be a commutative $\bQ$-algebra and let $A=k\llbracket t\rrbracket$. Set
    $\overline{A}=A/t$ and let $R$ be a smooth commutative $\overline{A}$-algebra. The prismatic
    site $(R/A)_\Prism$ is the opposite of the category pairs $(B,x_B)$ where $B$ is a flat $t$-complete $A$-algebra $B$
    and $x_B\colon\colon R\rightarrow\overline{B}=B/t$ is an $\overline{A}$-algebra homomorphism.
    We equip $(B,x_B)$ with the $t$-completely faithfully flat topology.
    Given an object $(B,x_B)$ we let $\Oscr(B,x_B)=B$ and $\overline{\Oscr}(B,x_B)=B/t$. These
    define sheaves on $(R/A)_\Prism$.
\end{definition}

To prove that this category has finite products and hence is indeed a site takes a little thought;
see~\cite[Sec.~3]{wasmuth-thesis}.
Wa{\ss}muth proves the following theorem on the structure of prismatic cohomology in this context.

\begin{theorem}\label{thm:prism}
    Let $R$ be a smooth commutative $\overline{A}$-algebra.
    \begin{enumerate}
        \item[{\em (i)}] {\bf $t$-de Rham comparison.} The cohomology $\R\Gamma((R/A)_\Prism,\Oscr)$ is computed by the complex
            $$R\llbracket t\rrbracket\xrightarrow{t\d}\Omega^1_{R/k}\llbracket
            t\rrbracket\xrightarrow{t\d}\Omega^2_{R/k}\llbracket t\rrbracket\rightarrow\cdots.$$
        \item[{\em (ii)}] {\bf Hodge--Tate comparison.} There is a natural multiplicative
            equivalence
            $\H^*((R/A)_\Prism,\overline{\Oscr})\iso\Omega^*_{R/k}$.
        \item[{\em (iii)}] {\bf de Rham comparison.} After inverting $t$, one obtains
            $$\R\Gamma((R/A)_\Prism,\Oscr)[t^{-1}]\we\R\Gamma_\dR(R/\overline{A})\Lparen
            t\Rparen.$$
    \end{enumerate}
\end{theorem}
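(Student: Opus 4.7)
The approach is two-step: first, identify the prismatic cohomology $\R\Gamma((R/A)_\Prism,\Oscr)$ with $\dRhat_{R/A}$ via descent through a natural covering object, and second, compute $\dRhat_{R/A}$ by applying the Gauss--Manin connection to the composition $A\to R\llbracket t\rrbracket\to R$, where the second map sends $t\mapsto 0$.

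For the identification step, observe that $(R\llbracket t\rrbracket,\id_R)$, with identity map $R\to R\llbracket t\rrbracket/(t)=R$, is a weakly initial object of $(R/A)_\Prism^{\op}$: for any pair $(B,x_B)$, smoothness of $R$ over $k$ combined with $t$-completeness of $B$ yields a lift of $x_B\colon R\to B/t$ to an $A$-algebra map $R\to B$, which in turn extends to an $A$-algebra map $R\llbracket t\rrbracket\to B$. Its \v{C}ech nerve consists of $t$-completed tensor powers of $R\llbracket t\rrbracket$ over $A$, and descent together with the \v{C}ech--Alexander description of $\dRhat$ provided by Theorem~\ref{thm:descent} identifies $\R\Gamma((R/A)_\Prism,\Oscr)$ with $\dRhat_{R/A}$.

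For the Gauss--Manin computation, apply Theorem~\ref{thm:main}(c) to $A\to R\llbracket t\rrbracket\to R$. The resulting complete filtration on $\F^\star_\H\dRhat_{R/A}$ has graded pieces $\F^\star_\H\dRhat_{R/R\llbracket t\rrbracket}\widehat{\otimes}_{R\llbracket t\rrbracket}\ins^i\Lambda^i\L_{R\llbracket t\rrbracket/A}[-i]$. Two inputs are needed: $\L_{R\llbracket t\rrbracket/A}\simeq\Omega^1_{R/k}\llbracket t\rrbracket$ is concentrated in degree $0$, so $\Lambda^i$ yields $\Omega^i_{R/k}\llbracket t\rrbracket$; and Theorem~\ref{thm:hartshorne}, applied to the surjection $R\llbracket t\rrbracket\to R$ from the trivially smooth $R\llbracket t\rrbracket$-algebra $R\llbracket t\rrbracket$ with kernel $(t)$, gives $\dRhat_{R/R\llbracket t\rrbracket}\simeq R\llbracket t\rrbracket$ with Hodge filtration matching the $(t)$-adic filtration. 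At the total-complex level, the Gauss--Manin coherent cochain complex thus takes the shape $\Omega^*_{R/k}\llbracket t\rrbracket$. The main obstacle is to identify the resulting differential with $t\d$: because $\F^i_\H\dRhat_{R/R\llbracket t\rrbracket}=t^i R\llbracket t\rrbracket$ and the Griffiths transversality built into Theorem~\ref{thm:main}(c) forces $\nabla$ to drop the Hodge weight by one on the target, the strict filtered representative of $\nabla$ must carry an extra factor of $t$ compared to the naive derivation. This can be verified against a strict cdga model produced by the classical Katz--Oda construction applied to a cofibrant replacement, or by cross-checking with an explicit \v{C}ech--Alexander computation on the prismatic side.

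Parts (ii) and (iii) then follow formally from (i). Reducing the $t$-de Rham complex modulo $t$ kills the differential and gives the Hodge--Tate comparison $\H^*((R/A)_\Prism,\overline{\Oscr})\iso\Omega^*_{R/k}$ as a graded ring; inverting $t$ and rescaling $\omega_i\in\Omega^i_{R/k}\Lparen t\Rparen$ by $t^{-i}$ converts $t\d$ into the ordinary de Rham differential, producing $\R\Gamma_\dR(R/\overline{A})\Lparen t\Rparen$.
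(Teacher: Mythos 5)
Two framing points before the gap itself. The paper never proves Theorem~\ref{thm:prism}: it is quoted from Wa{\ss}muth's thesis, and the paper's own result, Theorem~\ref{thm:our_prism}, runs in a different logical order --- it builds a natural filtered map $\F^\star_\H\dRhat_{R/A}\rightarrow t^\star\R\Gamma((R/A)_\Prism,\Oscr)$ directly from the universal property of $\dRhat$, computes $\dRhat_{R/A}$ via the Gauss--Manin connection for $A\rightarrow R\llbracket t\rrbracket\rightarrow R$, and then concludes the map is an equivalence by reducing mod $t$ and \emph{using} the Hodge--Tate comparison (ii) as an input from Wa{\ss}muth. Your second step reproduces the paper's Gauss--Manin computation quite faithfully, but note that Griffiths transversality only shows $\nabla$ is divisible by $t$, not that the resulting operator is $t\,\d$ rather than $t$ times some other derivation-like map; your ``verify against a strict cdga model or cross-check on the prismatic side'' should be replaced by the concrete check the paper performs, namely reducing to $R=\overline{A}[x]$ and computing $\nabla(x)=t\,\d x$.

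The genuine gap is in your identification step. The \v{C}ech nerve of $(R\llbracket t\rrbracket,\id_R)$ in $(R/A)_\Prism$ does \emph{not} consist of $t$-completed tensor powers of $R\llbracket t\rrbracket$ over $A$: for $D=R\llbracket t\rrbracket\widehat{\otimes}_AR\llbracket t\rrbracket$ the two composites $R\rightrightarrows D/t$ are the two inclusions into $R\otimes_kR$, which do not agree, so $D$ with its two projections is not even a cocone under two copies of $(R\llbracket t\rrbracket,\id_R)$ in the category of pairs. The actual self-products in the site are envelopes that coequalize the two structure maps to $R$ while remaining flat and $t$-complete; their existence is exactly the point the paper flags as nontrivial (Wa{\ss}muth, Sec.~3). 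Consequently Theorem~\ref{thm:descent} does not by itself identify $\R\Gamma((R/A)_\Prism,\Oscr)$ with $\dRhat_{R/A}$: you must still show, termwise and cosimplicially, that the $n$-th term of the prismatic \v{C}ech nerve agrees with $\dRhat_{R/P_n}$ for $P_n=R^{\otimes_kn}\otimes_kA$, e.g.\ by identifying both with the completion of $P_n$ along the kernel of $P_n\rightarrow R$ (via Theorem~\ref{thm:hartshorne}, or Corollary~\ref{cor:complete_omni} and Lemma~\ref{lem:classical}) and checking flatness. Filling this in is precisely Wa{\ss}muth's \v{C}ech--Alexander argument; and you cannot instead borrow the paper's shortcut, since its final step consumes the Hodge--Tate comparison (ii), which is part of what you are proving. (Minor: deducing (ii) from (i) also uses $\R\Gamma((R/A)_\Prism,\overline{\Oscr})\we\R\Gamma((R/A)_\Prism,\Oscr)/t$, which holds because the structure rings are $t$-torsion free, but deserves a sentence.)
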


We prove that this form of prismatic cohomology is nothing other than a Hodge-complete derived de
Rham cohomology.

\begin{theorem}\label{thm:our_prism}
    There is an equivalence $\R\Gamma((R/A)_\Prism,\Oscr)\we\dRhat_{R/A}$. The Gauss--Manin
    connection relative to the composition $A\rightarrow R\llbracket t\rrbracket\rightarrow R$
    recovers the $t$-de Rham comparison theorem.
\end{theorem}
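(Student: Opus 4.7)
The plan is to split the theorem into two statements: (i) the underlying equivalence $\R\Gamma((R/A)_\Prism,\Oscr)\we \dRhat_{R/A}$, and (ii) the identification of the Gauss--Manin filtration for $A\to R\llbracket t\rrbracket\to R$ with Wa{\ss}muth's $t$-de Rham comparison.

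For (i), I would argue that the pair $(R\llbracket t\rrbracket,\id_R)$ is an object of $(R/A)_\Prism$ (flat, $t$-complete, with $R\llbracket t\rrbracket/t\iso R$), and that its Čech nerve in the site computes the prismatic cohomology. On the other side, by the descent result for derived de Rham cohomology on pairs (Theorem \ref{thm:descent}(f)), applied to the map $(R/A)\to(R/R\llbracket t\rrbracket)$, the complex $\dRhat_{R/A}$ is computed by the totalization $\Tot(\dRhat_{R/R\llbracket t\rrbracket^{\bullet+1}})$. Since we are in characteristic zero, $\dRhat = \Infhat$, so this agrees with the infinitesimal Čech description, which can be matched termwise with Wa{\ss}muth's prismatic Čech complex. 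The two totalizations coincide, giving the natural equivalence.

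For (ii), the main tool is Theorem \ref{thm:griffiths} (equivalently Theorem \ref{thm:main}(c)) applied to the composite $A\to R\llbracket t\rrbracket \to R$. This provides a complete multiplicative filtration $\F^\star_\GM\dRhat_{R/A}$ with associated graded pieces
\[
\gr^i_\GM\dRhat_{R/A}\we \dRhat_{R/R\llbracket t\rrbracket}\widehat{\otimes}_{R\llbracket t\rrbracket}\ins^i\Lambda^i\L_{R\llbracket t\rrbracket/A}[-i].
\]
Two computations are needed. First, $\dRhat_{R/R\llbracket t\rrbracket}\we R\llbracket t\rrbracket$: since $R\llbracket t\rrbracket$ is $t$-adically complete and the map $R\llbracket t\rrbracket\to R$ is surjective on $\pi_0$, it is a canonical cover, so Theorem \ref{thm:nilinvariance} gives $\Infhat_{R/R\llbracket t\rrbracket}\we\Infhat_{R\llbracket t\rrbracket/R\llbracket t\rrbracket}\we R\llbracket t\rrbracket$, which equals $\dRhat_{R/R\llbracket t\rrbracket}$ in characteristic zero. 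Second, $\L_{R\llbracket t\rrbracket/A}\we\Omega^1_{R/k}\llbracket t\rrbracket$: this is base change of the cotangent complex together with $t$-completion, using that $R\llbracket t\rrbracket$ is the derived $t$-completion of $R\otimes_k A$ and $R$ is smooth over $k$.

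Substituting these identifications, the Gauss--Manin filtration gives the coherent cochain complex
\[
R\llbracket t\rrbracket \xrightarrow{\nabla}\Omega^1_{R/k}\llbracket t\rrbracket \xrightarrow{\nabla}\Omega^2_{R/k}\llbracket t\rrbracket\to\cdots,
\]
matching the shape of Wa{\ss}muth's complex. The hard part is to pin down the connecting map $\nabla$ and identify it with $t\d$ rather than the bare de Rham differential; this amounts to analyzing the Kodaira--Spencer class of the surjection $R\llbracket t\rrbracket\to R$, whose conormal generator is $t$, and tracking how it contracts against the splitting of the triangle $\Omega^1_{R/k}\llbracket t\rrbracket\to\L_{R/A}\to\L_{R/R\llbracket t\rrbracket}[1]$. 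Once this is settled by multiplicativity and checked on the generators $t$ and the local coordinates of $R/k$, Wa{\ss}muth's $t$-de Rham comparison theorem drops out as the explicit unwinding of $\F^\star_\GM\dRhat_{R/A}$.
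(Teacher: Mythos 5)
Your part (ii) follows the paper's strategy, but it contains a genuine error at the step $\dRhat_{R/R\llbracket t\rrbracket}\we R\llbracket t\rrbracket$. You assert that $R\llbracket t\rrbracket\rightarrow R$ is a canonical cover ``since it is surjective on $\pi_0$,'' and then invoke Theorem~\ref{thm:nilinvariance}. Surjectivity on $\pi_0$ is an \emph{additional} hypothesis in that theorem, not a criterion for being a canonical cover: a canonical cover requires the descent diagram to be a limit diagram \emph{universally}, and this fails for the quotient by the regular element $t$ (base change along $R\llbracket t\rrbracket\rightarrow R\llbracket t\rrbracket[t^{-1}]$ kills $R$ while the base survives). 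The corollary following Theorem~\ref{thm:nilinvariance} applies to the map from a Koszul complex to its $\pi_0$ precisely because that map has $1$-connective fiber; here the fiber $(t)$ sits in degree $0$. If $\pi_0$-surjectivity sufficed, nilinvariance applied to $\bQ\rightarrow\bQ[x^{\pm1}]\rightarrow\bQ$ would give $\Infhat_{\bQ[x^{\pm1}]/\bQ}\we\bQ$, contradicting the example at the start of Section~\ref{sec:completions}. The conclusion you want is nevertheless true, and the paper obtains it differently: in characteristic zero $\dRhat_{R/R\llbracket t\rrbracket}\we\Infhat_{R/R\llbracket t\rrbracket}\we(R\llbracket t\rrbracket)^\wedge_R$ by Lemma~\ref{lem:adams_completion}(a), and this is the classical $t$-adic completion $R\llbracket t\rrbracket$ by Lemma~\ref{lem:classical}; one also gets that the Hodge filtration is the $t$-adic filtration, which you need when identifying $\nabla$.

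For part (i) you take a genuinely different route from the paper: a \v{C}ech--Alexander comparison using the object $(R\llbracket t\rrbracket,\id_R)$ and descent for pairs along $(R/A)\rightarrow(R/R\llbracket t\rrbracket)$ (whose top map is the identity, so Theorem~\ref{thm:descent}(f) does apply). The paper instead constructs a natural map $\F^\star_\H\dRhat_{R/A}\rightarrow t^\star\R\Gamma((R/A)_\Prism,\Oscr)$ directly from the universal property (each test object $B$ with its $t$-adic filtration is a crystalline filtered ring), proves the $t$-de Rham comparison for $\dRhat_{R/A}$ via the Gauss--Manin connection, and then concludes using Wa{\ss}muth's Hodge--Tate comparison (Theorem~\ref{thm:prism}(ii)) together with $t$-completeness. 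Your route is essentially Wa{\ss}muth's own argument and can work, but the ``matched termwise'' step is where the content lies: you need the site-theoretic inputs (finite products exist and are $t$-completed tensor products, $(R\llbracket t\rrbracket,\id_R)$ covers the final object, $\Oscr$ has no higher cohomology on these products) and, on each cosimplicial level, the identification of $\dRhat_{R/R\llbracket t\rrbracket^{\otimes_A(\bullet+1)}}$ with the completion of that tensor power, which again comes from Lemma~\ref{lem:adams_completion} and Lemma~\ref{lem:classical}. Finally, in pinning down $\nabla=t\,\d$, the paper's mechanism for the key divisibility by $t$ is Griffiths transversality, i.e.\ the Hodge filtration carried along in Theorem~\ref{thm:griffiths}; your Kodaira--Spencer/generator check should be routed through that filtered refinement (and then a reduction to $R=\overline{A}[x]$) rather than multiplicativity alone.
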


\begin{proof}
    Suppose that $(B,x_B)$ is an object of $(R/A)_\Prism$. Then, we can view $B$ with its $t$-adic
    filtration $t^\star B$ as a crystalline---or infinitesimal---filtered derived commutative ring.
    As $\gr^0(t^\star B)\we B/t$ receives a map from $R$, it follows that there is a canonical map
    $\F^\star_\H\dR_{R/A}\rightarrow t^\star B$. Since $B$ is $t$-complete, this map factors
    through a map $\F^\star_\H\dRhat_{R/A}\rightarrow t^\star B$. As this is true functorially in
    $(B,x_B)$, we find in the end a natural map $\F^\star_\H\dRhat_{R/A}\rightarrow
    t^\star\R\Gamma((R/A)_\Prism,\Oscr)$.

    Now, consider the Gauss--Manin connection on $\F^\star_\H\dRhat_{R/A}$ relative to the
    composition $A\rightarrow R\llbracket t\rrbracket\rightarrow R$. We obtain a coherent cochain
    complex of the form
    \begin{equation}\label{eq:tderham}
        \dRhat_{R/R\llbracket t\rrbracket}\rightarrow\dRhat_{R/R\llbracket
        t\rrbracket}\widehat{\otimes}_{R\llbracket t\rrbracket}\Omega^1_{R\llbracket t\rrbracket/A}\rightarrow\cdots.
    \end{equation}
    Since we are in characteristic $0$, we have $\dRhat_{R/R\llbracket t\rrbracket}\we R\llbracket
    t\rrbracket$ and the Hodge filtration is the $t$-adic filtration. The completed tensor products
    allow us to rewrite~\eqref{eq:tderham} as a cochain complex
    \begin{equation}\label{eq:tderham2}
        R\llbracket t \rrbracket\xrightarrow{\nabla}\Omega^1_{R/k}\llbracket
        t\rrbracket\xrightarrow{\nabla}\cdots.
    \end{equation}
    It remains to identify the differential $\nabla$.
    Bringing the Hodge filtration and Griffiths transversality of Theorem~\ref{thm:griffiths} into play we can
    write~\eqref{eq:tderham2} as a filtered cochain complex
    $$t^\star R\llbracket t\rrbracket\xrightarrow{\nabla} t^{\star-1}R\llbracket
    t\rrbracket\otimes_R\Omega^1_{R/k}\xrightarrow{\nabla} t^{\star-2}R\llbracket
    t\rrbracket\otimes_R\Omega^2_{R/k}\xrightarrow{\nabla}\cdots.$$
    This proves that $\nabla$ is divisible by $t$. One can reduce to the case where
    $R=\overline{A}[x]$ and compute that $\nabla(x)=t\d x$.

    It follows that $\dRhat_{R/A}$ also satisfies the $t$-de Rham comparison theorem.
    Therefore, $\H^i(\dRhat_{R/A}\otimes_A\overline{A})\iso\Omega^i_{R/\overline{A}}$ 
    for all integers $i\geq 0$. Since both sides are derived $t$-complete, it follows from the
    Hodge--Tate comparison theorem that
    the natural map $\dRhat_{R/A}\rightarrow\R\Gamma((R/A)_\Prism)_\Oscr$
    is an equivalence.
\end{proof}

\begin{remark}
    The strategy of the proof of Theorem~\ref{thm:our_prism} is not so different from that of
    Wa{\ss}muth who uses a \v{C}ech--Alexander argument in place of the Gauss--Manin connection.
\end{remark}

\begin{remark}
    In mixed characteristic, one can still phrase this definition of prismatic cohomology using
    $t$-complete test objects instead of prisms in the sense of~\cite{prisms}. To do this, it is
    natural to use infinitesimal cohomology. However, the $t$-de
    Rham comparison theorem then will say that $\R\Gamma((R/A)_\Prism)\we\Infhat_{R/A}$ is computed
    by a $t$-de Rham complex of the form $$R\llbracket
    t\rrbracket\rightarrow\Omega^1_{R/k}\llbracket
    t\rrbracket\rightarrow\LSym_R^2(\Omega^1_{R/k}[-1])\llbracket t\rrbracket\rightarrow\cdots.$$
    If de Rham is used instead, then it is not even possible to work over the base $A$ because the
    ideal $(t)$
    does not admit divided powers. One could instead work over a divided power envelope and then
    obtain similar results. This is crystalline cohomology and is very close to the perspective
    of~\cite{bhatt-dejong}. We will return to this point in more detail in future work.
\end{remark}

\small
\bibliographystyle{amsplain}
\bibliography{gm}
\addcontentsline{toc}{section}{References}

\medskip
\noindent
\textsc{Department of Mathematics, Northwestern University}\\
{\ttfamily antieau@northwestern.edu}

\end{document}